\newtheorem{theorem}{Theorem}[section]
\newtheorem{prop}[theorem]{Proposition}
\newtheorem{lemma}[theorem]{Lemma}
\newtheorem{cor}[theorem]{Corollary}
\newtheorem{claim}[theorem]{Claim}
\theoremstyle{definition}
\newcounter{tenumerate}
\def\P{\mathbb{P}}
\newcommand{\one}{\1}
\newcommand{\reff}{R_{\mathrm{eff}}}
\renewcommand{\epsilon}{\varepsilon}
\newcommand{\1}{\mathbf{1}}
\DeclareMathOperator{\var}{Var}
\newcommand{\R}{{\mathbb R}}
\newcommand{\Z}{{\mathbb Z}}
\newcommand{\N}{{\mathbb N}}
\newcommand{\E}{{\mathbb E}}
\newcommand{\remove}[1]{}
\renewcommand{\leq}{\leqslant}
\renewcommand{\geq}{\geqslant}
\newcommand{\cov}{\mathrm{Cov}}
\def\XXint#1#2#3{{\setbox0=\hbox{$#1{#2#3}{\int}$}
\vcenter{\hbox{$#2#3$}}\kern-.5\wd0}}
\begin{document}

\title{Extreme values for two-dimensional discrete Gaussian free field}
\author{Jian Ding \\
Stanford University  \\ \& MSRI\thanks{Partially supported by NSF grant DMS-1313596.} \and Ofer Zeitouni\thanks{Partially supported by
NSF grants DMS-0804133 and
DMS-1106627, a grant from the Israel Science
Foundation, and the Herman P. Taubman chair of Mathematics at the
Weizmann institute.}
\\ University of Minnesota\\ \& Weizmann institute}

\date{June 1, 2012. Revised November 27, 2012 and March 23, 2013}

\maketitle

\begin{abstract}
  We consider in this paper the collection of near maxima of the
  discrete, two dimensional Gaussian free field in a box with Dirichlet
  boundary conditions. We provide a rough description of the geometry
of the set of
near maxima, estimates on the gap between the two largest maxima,
and an estimate for the right tail up to a multiplicative constant on the law of the centered maximum. 
\end{abstract}

\section{Introduction}

The discrete Gaussian free field (GFF) $\{\eta_v^N: v\in V_N\}$ on a
2D box $V_N$ of side length $N$ with Dirichlet boundary condition,
is a mean zero Gaussian process which takes the value 0 on $\partial
V_N$ and satisfies the following Markov field condition for all
$v\in V_N\setminus
\partial V_N$: $\eta_v^N$ is distributed as a Gaussian variable with
variance $1$ and mean equal to the average over the neighbors given
the GFF on $V_N\setminus \{v\}$ (see later for more formal definitions). One facet of the GFF that has received intensive attention is the
behavior of its maximum. In this paper,
we prove a number of results involving the maximum and near maxima 
of the GFF.
Our first result concerns the geometry of the set of near
maxima and
states that the vertices of large values are either close to or far away from
each other.
\begin{theorem}\label{thm-location}
There exists an absolute constant $c>0$,
\begin{equation}\label{eq-location}
\lim_{r \to \infty }\lim_{N\to \infty}\P(\exists v, u \in V_N:  r \leq |v - u|\leq N/r \mbox{ and } \eta_{u}^N, \eta_v^N \geq m_N - c\log\log r)=0\,,\end{equation}
where $m_N = \E \max_{v\in V_N} \eta_v^N$.
\end{theorem}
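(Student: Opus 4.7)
The plan is to run a first-moment (union-bound) argument on pairs of sub-boxes, combining the Gibbs--Markov decomposition of the GFF with the right-tail bound on the maximum promised in the abstract. It is important first to note that the corresponding first-moment argument at the level of pairs of \emph{points} is doomed: a direct bivariate Gaussian computation shows that for $u,v$ at distance $\rho = N^\theta$, the expected number of pairs with $\eta_u^N, \eta_v^N \geq m_N - c\log\log r$ grows like $N^{h(\theta)}$ with $h(\theta) = 2 + 2\theta - 4/(2-\theta) \ge 0$ on $[0,1]$, reflecting the clustered structure of near-maxima. To access the sharper Bramson tail $\P(\max \geq m + y) \leq C(1+y)e^{-\alpha y}$, one must pass to maxima over sub-boxes.

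Fix a dyadic scale $\rho \in [r, N/r]$ and tile $V_N$ by axis-aligned boxes of side $r/8$. Any bad pair $(u,v)$ with $|u-v|\approx \rho$ lies in two such boxes $B,B'$ at mutual distance $d \in [\rho/2,\, 2\rho]$ with $\max_B \eta^N, \max_{B'} \eta^N \geq m_N - c\log\log r$. For each such pair apply the Gibbs--Markov decomposition $\eta^N = \phi + \tilde\eta$ on $B\cup B'$, where $\tilde\eta|_B$ and $\tilde\eta|_{B'}$ are independent Dirichlet GFFs and $\phi$ is the harmonic extension of the boundary data; crucially, $\phi$ has only $O(1)$ oscillation on the bulk of each small box (since $\var(\phi(x)-\phi(y))$ is bounded for $x,y$ in the interior of a single box). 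Setting $M^\ast(B) := \max \tilde\eta|_B$, conditionally on $\phi$ the pair $M^\ast(B),M^\ast(B')$ is independent, each controlled by the right-tail bound with threshold $m_{r/8}+y$; the conditional probability of both maxima exceeding $m_N - c\log\log r$ factors accordingly.

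Taking expectation over $\phi$ yields a bivariate Gaussian integral in $(\bar\phi(B),\bar\phi(B'))$ with variances $g\log(N/r)+O(1)$ and covariance $g\log(N/d)+O(1)$. The key phenomenon is the identity $\alpha=2/\sqrt g$ intrinsic to the 2D GFF normalization: the dominant exponent $-2\alpha(m_N-m_{r/8}) \sim -8\log(N/r)$ in the conditional tail exactly cancels the MGF contribution $\alpha^2\cdot \var(\bar\phi(B)+\bar\phi(B')) \sim 8\log(N/r)$ coming from tilting by the Gaussian $\bar\phi$. What survives is a polylogarithmic prefactor combining the $-\gamma^\ast\log\log N$ correction in $m_N$ (with $\gamma^\ast = 3/(4\sqrt g)$) with the $-c\log\log r$ gap, of the schematic form $(\log r)^{-\delta(c)}\cdot \mathrm{polylog}(N/r)$, together with a geometric factor $(r/d)^{\kappa}$ from the $(1-\rho_{\mathrm{corr}})$-term of the bivariate tail. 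Provided $c$ is chosen smaller than an absolute constant determined by $\gamma^\ast$, $\delta(c)>0$. Summing over $O((N/\rho)^2\cdot \rho/r)$ ordered pairs of boxes at each scale $d\asymp \rho$ and then over dyadic $\rho\in[r,N/r]$ produces a total expected count $\leq g(r)$ with $g(r)\to 0$ as $r\to\infty$, uniformly in $N$.

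The principal obstacle is precisely Step~4, the bivariate Gaussian integration. First, the $\log\log N$ correction in $m_N$ introduces a factor of $(\log N)^{O(1)}$ into the per-pair estimate, and one must check it is compensated either by the $\rho$-decay $(r/d)^\kappa$ or by the smallness of $c$; the balance is delicate. Second, one must handle separately the atypical regime $\{\bar\phi(B),\bar\phi(B')\ge m_N-m_{r/8}\}$, where the conditional tail becomes trivially $1$: there one replaces it by the direct bivariate Gaussian tail $\P(\bar\phi(B),\bar\phi(B') \ge 2\sqrt g\log(N/r))$, which has the same order. Combining the two regimes cleanly, and checking that the critical cancellation survives the $\log\log$-corrections at both ends ($m_N$ and $m_{r/8}$), is the heart of the argument.
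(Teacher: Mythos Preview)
Your proposal has a fundamental gap: the first-moment (union bound) argument on pairs of boxes does not close. The issue is not in the bookkeeping but in the strategy itself.

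Fix boxes of side $\ell=r/8$ and write $A=\log(N/r)$, $B=\log(N/\rho)$. After the Gibbs--Markov decomposition, the event $\{\max_B\eta^N\ge m_N-\lambda\}$ forces, to leading order, $\bar\phi(B)+M^{*\prime}(B)\ge\mu$ with $\mu\approx 2\sqrt{g}\log(N/r)$ and $\sigma^2:=\var\bar\phi(B)\approx g\log(N/r)$. Because the GFF satisfies the critical relation $\alpha\sigma^2=\mu$ (this is exactly your ``cancellation''), the Laplace optimum for the pair problem lands on the boundary $s_i=0$ of the region $\{M_i^{*\prime}=s_i\ge 0\}$; the correct per-pair upper bound is therefore governed by the bivariate Gaussian tail $\P(\bar\phi(B)\ge\mu,\bar\phi(B')\ge\mu)\asymp\exp\!\big(-4A^2/(A+B)\big)$, not by the unrestricted tilted-MGF heuristic $(\ell/d)^4$ (which exceeds~$1$ on most of the range and is hence useless as a probability bound). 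Counting pairs of $r$-boxes at scale $\rho$ gives $\asymp(N/r)^2(\rho/r)^2=\exp(4A-2B)$ pairs, and one line of algebra shows the first-moment exponent is
\[
4A-2B-\frac{4A^2}{A+B}=\frac{2B(A-B)}{A+B}.
\]
This vanishes at the endpoints $\rho=r$ and $\rho\asymp N$ but is strictly positive in between; at $\rho=\sqrt{N}$ it equals $n/3+O(\log r)$, so the expected number of bad pairs of boxes is $\gtrsim N^{1/3}$ and the union bound is vacuous. (Your pair-count ``$O((N/\rho)^2\cdot\rho/r)$'' is also off, but fixing it does not help.) In short, passing from points to boxes does not cure the overcounting you correctly diagnosed: near-maximal $r$-boxes are themselves hierarchically clustered at every intermediate scale, for the same reason points are.

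The paper takes an entirely different route. It studies the functional $\eta^\diamond_{N,r}=\max\{\eta^N_u+\eta^N_v: r\le|u-v|\le N/r\}$ and shows, via Sudakov--Fernique comparison with MBRW/BRW (Proposition~\ref{prop-compare-1}, Corollary~\ref{cor-sum-pairs-gff}), that $\E\eta^\diamond_{N,r}\le 2m_N-c_1\log\log r+C$. The BRW upper bound (Corollary~\ref{cor-BRWpair}) is precisely where the missing ingredient enters: it is a \emph{barrier/ballot} estimate controlling trajectories at all intermediate levels simultaneously, not a two-scale decomposition. Theorem~\ref{thm-location} then follows by contradiction through a sprinkling argument (Lemma~\ref{lem-sprinkling}) that amplifies any persistent $\epsilon$-probability of a bad pair into a lower bound on $\E\eta^\diamond$ incompatible with the comparison. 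If you want a direct GFF argument, you would need to import a multi-scale barrier estimate on the harmonic averages at all scales between $r$ and $N/r$---essentially reproducing the BRW ballot computation inside the GFF---rather than a single two-scale split.
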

(The asymptotic behavior of $m_N$ is recalled  in
 \eqref{eq-bramson-zeitouni} below.)
In addition, we show that the number of particles within distance $\lambda$ from the maximum  grows exponentially.
\begin{theorem}\label{thm-exponential-growth}
For $\lambda>0$, let
$A_{N, \lambda} = \{v\in V_N: \eta_v^N \geq m_N - \lambda\}$ for $\lambda>0$. Then there exist absolute constants $c,C$ such that 
$$ \lim_{\lambda\to\infty}\lim_{N\to\infty}\P(c \mathrm{e}^{c\lambda}\leq |A_{N, \lambda}| \leq C\mathrm{e}^{C\lambda})=1\,.$$
\end{theorem}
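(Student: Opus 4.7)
The two directions of Theorem~\ref{thm-exponential-growth} have rather different character, and I would argue them separately. For both I rely on two ingredients: the Gibbs--Markov (domain Markov) decomposition of the GFF on a subdomain into a harmonic extension plus an independent zero-boundary GFF, and Theorem~\ref{thm-location} together with the right-tail bound for $\max_v \eta_v^N$ advertised in the abstract.

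For the upper bound, the naive first moment only gives $\E|A_{N,\lambda}| \lesssim (\log N)\,e^{c\lambda}$ via Mills' ratio (using $\var \eta_v^N = O(\log N)$ and the second-order asymptotics of $m_N$); the spurious $\log N$ factor reflects that a typical near-maximum is accompanied by many highly correlated neighbors, so one must perform the counting modulo short-range correlations. To do this I would invoke Theorem~\ref{thm-location} with $r=r(\lambda)$ chosen so that $c\log\log r\ge 2\lambda$: with high probability $A_{N,\lambda}$ is contained in a disjoint union of balls of radius $r$ whose centers are pairwise at distance $\ge N/r$. A first-moment argument on a single representative per cluster, combined with the right-tail estimate for the maximum, bounds the number of such clusters by $e^{C\lambda}$. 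Inside each ball $B$, the decomposition $\eta = \varphi+\psi$ reduces $|A_{N,\lambda}\cap B|$ to the count of vertices where the independent zero-boundary GFF $\psi$ on $B$ exceeds $m_N-\lambda-\varphi_u$; since $\var\psi_u = O(\log r)$ is independent of $N$, a direct first-moment bound limits this count to $e^{C\lambda}$. Multiplying the two yields the desired bound with high probability.

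For the lower bound I would use a one-step renormalization. With probability tending to $1$ there exists a vertex $v^\star$ with $\eta_{v^\star}^N \ge m_N-1$. Fix a scale $\rho=\rho(\lambda)$ so that $m_\rho \approx \lambda$ (hence $\rho \sim e^{c\lambda}$) and apply the Markov decomposition on $B=B(v^\star,\rho)$: write $\eta_u = \varphi_u+\psi_u$ with $\varphi$ harmonic equal to $\eta$ on $\partial B$ and $\psi$ an independent zero-boundary GFF in $B$. Since $\varphi$ is smooth at scale $\rho$, $|\varphi_u-\varphi_{v^\star}|\le C_0$ on the sub-ball $B'$ of radius $\rho/2$, so the condition $\eta_u\ge m_N-\lambda$ on $B'$ reduces to $\psi_u\ge m_\rho-O(1)$, i.e., $\psi$ exceeding a level within a constant of its own expected maximum. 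A Paley--Zygmund/second-moment argument on $\psi$ restricted to a well-spread subset of $B'$ then produces at least $c\,e^{c\lambda}$ such vertices; concentration (using independence of $\psi$ from the macroscopic data and a union bound over disjoint sub-balls if needed) upgrades the resulting positive-probability statement to the required high-probability one.

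The main obstacle I anticipate is the inside-the-cluster portion of the upper bound: the cluster ball has radius $r$ that is doubly exponential in $\lambda$ (because of the $\log\log r$ threshold in Theorem~\ref{thm-location}), so the first-moment bound on $\psi$ must be established uniformly in $r$. I expect this to require an extension of the right-tail estimate to GFFs on balls of arbitrary radius with deterministic (harmonic) boundary data, precisely in order to rule out rare clusters containing anomalously many near-maxima.
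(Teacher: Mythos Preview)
Your upper-bound strategy has a genuine gap, and it is precisely the obstacle you flag at the end. To invoke Theorem~\ref{thm-location} at level $\lambda$ you need $c\log\log r\ge 2\lambda$, so $r\asymp \exp(e^{c'\lambda})$ and $\log r\asymp e^{c'\lambda}$. Inside a cluster ball $B$ of that radius, writing $\eta=\varphi+\psi$, the threshold for $\psi$ is $T=m_N-\lambda-\varphi$, and the first moment of $|\{u\in B:\psi_u\ge T\}|$ is governed by $m_r-T=m_r-m_N+\lambda+\varphi$. You have no a priori upper bound on $\varphi$ on a cluster that \emph{does} contain a near-maximum: conditioning on the cluster being active biases $\varphi$ upward, and the fluctuation scale of the local field $\psi$ is $\sqrt{\log r}\asymp e^{c'\lambda/2}$, not $O(1)$. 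A careful case analysis over the value of $\varphi$ might in principle be made to work, but ``a direct first-moment bound'' does not; the paper itself remarks that the clustering-plus-packing route yields only a \emph{doubly-exponential} upper bound on $|A_{N,\lambda}|$. The paper circumvents this entirely: it compares the sum of the $\ell$ largest GFF values, $\mathcal S_{\ell,N}$, to the analogous quantity for BRW via a variant of Slepian's inequality (Lemma~\ref{lem-sudakov-fernique-extension} and Lemma~\ref{lem-compare-3}), obtains $\E\mathcal S_{\ell,N}\le \ell(m_N-c\log\ell)$ from BRW tail estimates, and then runs a sprinkling-contradiction argument (as in Lemma~\ref{lem-sprinkling}) to rule out $|A_{N,\lambda}|\ge e^{\alpha\lambda}$ for $\alpha$ large.

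Your lower-bound sketch also has a problem: you center the ball at the random maximizer $v^\star$, so after the Gibbs--Markov decomposition the local field $\psi$ is \emph{not} independent of the choice of ball, and the second-moment computation on $\psi$ is not legitimate as stated. Moreover, from $\eta_{v^\star}\ge m_N-1$ you cannot read off $\varphi_{v^\star}$ separately, so the reduction to ``$\psi_u\ge m_\rho-O(1)$'' is not justified. The paper's lower bound is closer in spirit to what you would need to fix this: it tiles $V_N$ a priori into many disjoint sub-boxes, uses independence of the zero-boundary fields there to find many boxes whose internal maximum is near $m_N$, and then controls the (correlated) harmonic parts via a Slepian-type argument; this is the content of Lemma~\ref{lem-lower-tail-gamma}.
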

Another important characterization of the joint behavior for the
near
maxima is the spacings of the ordered statistics, out of which the gap
between the largest two values is of particular interest. We show
that the right tail of this gap is of Gaussian type,
as well as that the gap is of order 1.
\begin{theorem}\label{thm-gap}
Let $\Gamma_N$ be the gap between the largest and the second largest
values in $\{\eta_v^N: v\in V_N\}$. Then, there exist absolute
constants $c,C>0$ such that for all $\lambda>0$ and all $N\in \N$
\begin{align}
c\mathrm{e}^{-C\lambda^2}\leq \P&(\Gamma_N \geq \lambda) \leq
C\mathrm{e}^{-c\lambda^2}\,,\label{eq-gap-gaussian}\\
\lim_{\delta\to 0}
\limsup_{N\to \infty}\P&(\Gamma_N \leq \delta) =0\,. \label{eq-lower-gap}
\end{align}
\end{theorem}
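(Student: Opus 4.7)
The plan is to analyze $\Gamma_N$ through the single-site Markov property of the GFF. For any interior vertex $v$, let $\bar\eta_v \df \tfrac{1}{4}\sum_{w \sim v}\eta_w$ be the average over $v$'s four neighbors and $\xi_v \df \eta_v - \bar\eta_v$, which is $\mathcal N(0,1/4)$-distributed and independent of $\{\eta_w : w \neq v\}$. Set $Y_v \df \max_{w \neq v}\eta_w - \bar\eta_v \geq 0$, the inequality holding because the maximum of the four neighbors of $v$ lower-bounds their average. The key observation is that, writing $V^*$ for the (a.s.\ unique) location of the maximum, one has $\{V^* = v\} = \{\xi_v > Y_v\}$, and on this event $\Gamma_N = \xi_v - Y_v$. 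Summing over the disjoint events $\{V^* = v\}$ gives the twin identities
\begin{equation*}
\P(\Gamma_N \geq \lambda) = \sum_v \E\bigl[\bar\Phi_\xi(Y_v+\lambda)\bigr], \qquad \P(\Gamma_N \leq \delta) = \sum_v \P(Y_v < \xi_v \leq Y_v+\delta),
\end{equation*}
where $\bar\Phi_\xi$ is the upper tail of $\mathcal N(0,1/4)$. At $\lambda=0$ the first identity collapses to $\sum_v \P(V^* = v) = 1$, so Theorem~\ref{thm-gap} amounts to measuring how these sums shift under a perturbation of size $\lambda$ or $\delta$.

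For the Gaussian upper tail in \eqref{eq-gap-gaussian}, log-concavity of $\bar\Phi_\xi$ yields $\bar\Phi_\xi(y+\lambda)/\bar\Phi_\xi(y) \leq 2\bar\Phi_\xi(\lambda) \leq 2 e^{-2\lambda^2}$ for every $y \geq 0$, and plugging this into the first identity immediately gives $\P(\Gamma_N \geq \lambda) \leq 2 e^{-2\lambda^2}$. For the matching lower bound, I restrict the sum to $\{Y_v \leq K\}$ for a large fixed $K$, on which $\bar\Phi_\xi(Y_v+\lambda) \geq c e^{-C(K+\lambda)^2}$, and use $\sum_v \P(Y_v \leq K) = \E|\{v : Y_v \leq K\}| \geq \P(Y_{V^*} \leq K)$. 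The last probability is made at least $1-\e$ for $K$ large by combining tightness of $\max\eta - m_N$ with Theorem~\ref{thm-exponential-growth} applied at a small parameter: the latter ensures that with positive probability each neighbor of the argmax is itself in $A_{N,O(1)}$, hence $\bar\eta_{V^*} \geq \eta_{V^*} - O(1)$ and therefore $Y_{V^*} = O(1)$.

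For the anti-concentration \eqref{eq-lower-gap}, using the density bound $f_\xi(y) = \sqrt{2/\pi}\,e^{-2y^2}$ in the second identity gives
\begin{equation*}
\P(\Gamma_N \leq \delta) \leq \sqrt{2/\pi}\,\delta \cdot \E\Bigl[\sum_v e^{-2 Y_v^2}\Bigr],
\end{equation*}
and the task reduces to bounding $\sum_v e^{-2 Y_v^2} = O(1)$ on a high-probability event. The layer-cake identity $e^{-2Y_v^2} = \int_0^\infty 4y e^{-2y^2}\1\{Y_v \leq y\}\,dy$ further reduces this to controlling $|\{v : Y_v \leq y\}|$. Since $Y_v \leq y$ forces $\bar\eta_v \geq \max_w\eta_w - y \geq m_N - y - O(1)$ on the event $\{\max_w\eta_w \geq m_N - O(1)\}$, at least one neighbor of $v$ must lie in $A_{N,y+O(1)}$, whence $|\{v : Y_v \leq y\}| \leq 4 |A_{N,y+O(1)}|$. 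Theorem~\ref{thm-exponential-growth} gives $|A_{N,y+O(1)}| \leq C e^{Cy}$ with probability tending to $1$ uniformly in the relevant range of $y$, and the resulting integral $\int_0^\infty 4y e^{-2y^2+Cy}\,dy$ converges. Thus $\sum_v e^{-2Y_v^2} \leq C$ on the good event, while on its complement we use $\P(\Gamma_N \leq \delta) \leq 1$. The combination yields $\limsup_N \P(\Gamma_N \leq \delta) \leq C\delta + o(1)$, and letting $\delta \to 0$ (and afterwards the auxiliary parameter $K$ to infinity) proves \eqref{eq-lower-gap}.

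The main technical obstacle is transferring the statement of Theorem~\ref{thm-exponential-growth}, phrased as a limit in $\lambda$ after $N$, into a bound on $|A_{N,y}|$ valid simultaneously for a range of $y$ on a single high-probability event. Once that uniformity is in hand, the single-site Gaussian decomposition handles both directions of \eqref{eq-gap-gaussian} and the anti-concentration \eqref{eq-lower-gap} in one unified argument.
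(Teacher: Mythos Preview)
Your single-site Gibbs decomposition is the same mechanism the paper exploits, packaged more cleanly; the upper bound on $\P(\Gamma_N\geq\lambda)$ via the ratio $\bar\Phi_\xi(y+\lambda)/\bar\Phi_\xi(y)\leq 2\bar\Phi_\xi(\lambda)$ for $y\geq 0$ is correct and is essentially the paper's density-ratio map $\phi_\lambda$ rewritten (one slip: under the paper's normalization the conditional variance of $\eta_v$ is $1$, not $1/4$, so $\xi_v\sim\mathcal N(0,1)$). Your anti-concentration argument for \eqref{eq-lower-gap} via layer-cake is a reasonable alternative to the paper's bijection $\Phi_i$; both lean on the upper bound in Theorem~\ref{thm-exponential-growth}, though note that the paper only needs that bound at a \emph{single} large value of $\lambda$ (to bound the multiplicity of $\Phi_i$), which sidesteps the uniformity-in-$y$ issue you flag.

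The real gap is in the lower bound of \eqref{eq-gap-gaussian}. You need some vertex $v$ with $Y_v\leq K$, equivalently $\bar\eta_v\geq \max_w\eta_w-K$, with probability bounded away from zero. You claim this follows from Theorem~\ref{thm-exponential-growth}, but that theorem says nothing about the lattice geometry of $A_{N,\lambda}$ at scale $1$: knowing that $|A_{N,O(1)}|$ is large gives no reason for any four of its members to be the neighbors of a common vertex, let alone of the argmax. The paper fills this with a separate comparison argument (Lemma~\ref{lem-max-zeta}): the averaged field $\zeta_v=\tfrac14\sum_{u\sim v}\eta_u$, restricted to the even sublattice, dominates in the Sudakov--Fernique sense a GFF on a box of side $2^{-\kappa}N$, whence $\E\max_{v\in V_N^e}\zeta_v\geq m_N-O(1)$ and, combined with the exponential right-tail bound, $\P(\max_v\bar\eta_v\geq m_N-C_\epsilon)\geq 1-\epsilon$. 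This is exactly the input your sum formula needs, and once you have it your argument goes through; but it is a genuine additional ingredient, not a consequence of Theorem~\ref{thm-exponential-growth}.
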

\noindent
We do not know whether the gap estimate in \eqref{eq-gap-gaussian} can be improved to a precise Gaussian tail estimate as $\lambda\to\infty$.

Finally, we compute the right tail for the maximum up to a multiplicative constant.
Set
$M_N=\max_v \eta_v^N$.
\begin{theorem}\label{thm-right-tail}
There exists a constant $C>0$ such that for all
$\lambda\in [1,\sqrt{\log N})$,
$$C^{-1}\lambda\mathrm{e}^{-\sqrt{2\pi} \lambda}\leq
\P(M_N> m_N+\lambda)\leq
C \lambda\mathrm{e}^{-\sqrt{2\pi} \lambda}\,.$$
\end{theorem}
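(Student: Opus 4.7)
The plan is to prove the two bounds by separate ballot-type arguments that both exploit a dyadic scale decomposition of the GFF. For $v\in V_N$ and $k=0,1,\dots,L$ with $L=\lfloor\log_2 N\rfloor$, let $B_k(v)$ denote the dyadic subbox of side $2^{L-k}$ containing $v$ and let $S_v^{(k)}$ be a scale-$k$ Gaussian ``partial sum'' associated with $v$ (for instance, the value at $v$ of the harmonic extension of $\eta^N$ off $\partial B_k(v)$). Then $S_v^{(0)}=0$, $S_v^{(L)}=\eta_v^N$, the successive increments $S_v^{(k+1)}-S_v^{(k)}$ are mean-zero Gaussians of variance $\Theta(1)$, and $(S_v^{(k)})_{k\leq L}$ behaves like a Gaussian random walk with $\log N$ steps and terminal variance $\sigma_N^2=\tfrac{2}{\pi}\log N+O(1)$. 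Since $m_N/\sigma_N^2=\sqrt{2\pi}+o(1)$, the ratio $\sqrt{2\pi}$ appearing in the statement is precisely the Mills-ratio slope of $\eta_v^N$ at $m_N$.

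\textbf{Upper bound.} For each bulk vertex $v$ I would bound $\P(\eta_v^N\geq m_N+\lambda,\ v \text{ realizes } M_N)$ by the joint Gaussian event $\{S_v^{(L)}\geq m_N+\lambda\}\cap\bigcap_k\{S_v^{(k)}\leq \tfrac{k}{L}m_N+C\}$. The justification for inserting the ballot constraint is that whenever $S_v^{(k)}$ substantially exceeds $\tfrac{k}{L}m_N$ at some intermediate scale, Theorem \ref{thm-exponential-growth} applied at scale $2^{L-k}$ together with the tightness of $M_n-m_n$ (which follows from Theorems \ref{thm-location}--\ref{thm-gap}) forces the GFF on $B_k(v)$ to carry a maximum large enough that $v$ cannot be the global maximum except on a negligible event. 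A Gaussian ballot estimate (ending the walk $\lambda$ above the barrier contributes a factor $\lambda/L$ relative to the unconstrained Gaussian density) then gives the above joint probability to be of order $N^{-2}\lambda e^{-\sqrt{2\pi}\lambda}$, and summing over the $\asymp N^{2}$ vertices of $V_N$ yields the claimed upper bound.

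\textbf{Lower bound.} For the same decomposition, I would apply a truncated second-moment argument to
\[
\tilde Z_\lambda \;=\; \bigl|\{v\in V_N:\dist(v,\partial V_N)\geq N/4,\ \eta_v^N\geq m_N+\lambda,\ S_v^{(k)}\leq\tfrac{k}{L}m_N+C \text{ for all } k\leq L\}\bigr|.
\]
The same ballot calculation gives $\E\tilde Z_\lambda\geq c\lambda e^{-\sqrt{2\pi}\lambda}$. For the second moment I would split pairs $(u,v)$ by their separation: close pairs $|u-v|\leq r$ contribute at most $O(r^2)\,\E\tilde Z_\lambda$; mesoscopic pairs $r\leq|u-v|\leq N/r$ contribute only a vanishing amount by Theorem \ref{thm-location}; and distant pairs $|u-v|\geq N/r$ contribute at most $O((\E\tilde Z_\lambda)^2)$ by the approximate independence of the tail portions of the two ballot-walks after their common initial segment. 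This yields $\E\tilde Z_\lambda^{\,2}\leq C(\E\tilde Z_\lambda)^{2}$, and Paley--Zygmund gives $\P(M_N\geq m_N+\lambda)\geq\P(\tilde Z_\lambda\geq 1)\geq c\lambda e^{-\sqrt{2\pi}\lambda}$ as required.

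\textbf{Main obstacle.} The hardest point is the ballot-type estimate itself. Because the scale-$k$ quantities $S_v^{(k)}$ are spatially correlated across different $v$ (unlike in branching random walk), classical ballot lemmas do not apply off the shelf; instead one must perform a delicate Gaussian conditioning at each of the $\log N$ scales, exploit the near-Markov property of the GFF on nested dyadic boxes, and feed in the tightness of $M_n-m_n$ inductively. Propagating this across all scales without losing the crucial factor of $\lambda$, and in particular arguing rigorously that "$v$ realizes $M_N$" implies the barrier constraint $S_v^{(k)}\leq\tfrac{k}{L}m_N+C$ with only negligible error, is the technical heart of the proof.
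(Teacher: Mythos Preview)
Your route is genuinely different from the paper's. The paper does \emph{not} run a ballot argument directly on the GFF; instead it proves the tail estimate first for the MBRW (Lemma~\ref{lem-gapmbrw}), where for each fixed vertex the scale increments are exactly i.i.d.\ Gaussians and the second-moment/ballot computation is clean, and then transfers the result to the GFF by the Slepian-type comparison of Lemma~\ref{lem-compare-2}. All of the delicate analysis you anticipate is thus carried out on the simpler tree-like process, and the passage to the GFF costs only a fixed multiplicative constant.

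Your direct approach is in principle viable and is close in spirit to what was later done in \cite{BDZ13}, but as written it has a real gap, and you have located it yourself. For the upper bound you replace the event ``$v$ realizes $M_N$'' by a barrier constraint on $S_v^{(k)}$, invoking Theorem~\ref{thm-exponential-growth} and tightness. Two issues: (i) tightness of $M_N-m_N$ is an input from \cite{BZ10}/\eqref{eq-concentration}, not a consequence of Theorems~\ref{thm-location}--\ref{thm-gap}; more importantly, (ii) showing that violation of the barrier at some scale $k$ kills the event ``$v=\arg\max$'' up to negligible probability---\emph{without losing the factor $\lambda$}---is exactly the hard step, and neither Theorem~\ref{thm-exponential-growth} nor tightness gives it directly. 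For the lower bound, your use of Theorem~\ref{thm-location} to dispose of mesoscopic pairs is not straightforward either: that theorem controls unconditioned near-maxima, whereas you need to bound $\P(\tilde A_u\cap \tilde A_v)$ for the \emph{ballot-constrained} events, and the scale decompositions of $u$ and $v$ share randomness in a way that is messier than in BRW/MBRW. The paper's two-step strategy (ballot on MBRW, then Slepian) bypasses all of this; what your approach would buy, if completed, is a self-contained argument on the GFF that does not rely on an auxiliary comparison process.
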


\noindent{\bf Related work.} In the mathematical literature, the study on the maximum of the
GFF goes back at least
to Bolthausen, Deuschel and Giacomin
\cite{BDG01} who established the law of large numbers
for $M_N/\log N$
by associating with the GFF an appropriate branching structure.
Afterwards, the main focus has shifted to the study of
fluctuations of the maximum.
Using hypercontractivity estimates,
Chatterjee \cite{Chatterjee08} showed that the variance of the maximum
is $o(\log n)$, thus demonstrating a better concentration than that
guaranteed by the
Borell-Sudakov-Tsirelson isoperimetric inequality, which is however still weaker
than the correct $O(1)$ behavior.
Later, Bolthausen, Deuschel and Zeitouni \cite{BDZ10} proved that
$(M_n - \E M_n)$ is tight along a
deterministic subsequence $(n_k)_{k\in \N}$; they further
showed that in order to get rid of the susbequence, it
suffices to compute a precise estimate (up to additive constant)
on the expectation of the maximum.
An estimate in such precision was then achieved by Bramson and
Zeitouni \cite{BZ10}, by comparing the GFF with
the modified branching random walk (MBRW) introduced therein.
They showed that the sequence of random variables $M_N - m_N$ is tight, where
\begin{equation}\label{eq-bramson-zeitouni}
m_N = 2\sqrt{2/\pi} \big(\log N - \tfrac{3}{8} \log\log N\big)
+ O(1)\,.\end{equation}
Using the
``sprinkling method'',
this was later improved by Ding \cite{Ding11}, who
showed
that there exist absolute constants $C,c>0$ so that for all $N\in
 \N$  and  $0\leq \lambda\leq (\log N)^{2/3}$
\begin{align}\label{eq-concentration}
c\mathrm{e}^{-C\lambda} \leq \P(M_N \geq m_N + \lambda) \leq
C\mathrm{e}^{-c\lambda}\,,\mbox{ and }
 c\mathrm{e}^{-C \mathrm{e}^{C\lambda}}&\leq \P(M_N \leq m_N - \lambda)
\leq C \mathrm{e}^{-c\mathrm{e}^{c\lambda}}\,.
\end{align}
Note that
our Theorem~\ref{thm-right-tail} gives an improvement
upon the estimates on the right tail of $M_N$ in
\eqref{eq-concentration}; the precise estimate for the left tail is more challenging because an analogous comparison to Lemma~\ref{lem-compare-2} for the left tail could not be achieved in an obvious way.

In contrast with the research activity concerning the maximum of the GFF,
not much has been done until very recently concerning
its near maxima.
To our knowledge, the only results in the mathematical
literature is due to
Daviaud \cite{Daviaud06} who studied the geometry of the set of
large values of the
GFF which are within a \textit{multiplicative} constant from
the expected maximum, i.e. those values above $\eta m_N$ with
$\eta\in (0,1)$. He showed that the logarithm of the cardinality of the set of such near maxima is asymptotic to $2(1-\eta)\log N$, and described the fractal structure of these sets. Related work for the continuous GFF is contained in Hu, Miller and Peres \cite{HMP}.


In contrast with the GFF,
much more is known concerning
both the value of the maximum and the structure of near maxima
for the model of
branching Brownian motions.
The study of the maximum of the
BBM
dates back to a classical paper by Kolmogorov,
Petrovskii, and Piscounov \cite{KPP37},
where they discussed the
KPP equation (also known as the Fisher equation). The probabilistic interpretation of the KPP equation in
terms of BBM, described in McKean \cite{M75}, 
was further exploited by Bramson \cite{Bramson78,Bramson83}. It
was then proved that both the left and right tails exhibit
exponential decay and the precise exponents were computed. See,
e.g., Bramson \cite{Bramson83} and Harris \cite{Harris99} for the
right tail, and see Arguin, Bovier and Kistler \cite{ABK-a} for the
left tail (the argument is due to De Lellis). In addition, Lalley
and Sellke \cite{LS87} obtained an integral representation for the
limiting law of the centered maximum.

More recently, the structure of the point process of maxima of the BBM
was described in great detail,
in a series of papers
by Arguin, Bovier and Kistler \cite{ABK-a, ABK-b, ABK-c} and
in a paper
by A\"{i}d\'{e}kon, Berestycki, Brunet and Shi \cite{ABBS11}. These papers
describe
the limit of the process of extremes of BBM, as
a certain Poisson point process with exponential density
where each atom is
decorated by an independent copy of an auxiliary point process.

In the physics literature, the link between the extremal process for the GFF and that for BBM is often assumed, and some aspects of it (such as tail distributions) are  conjectured to hold for a general class of logarithmicaly correlated  processes, see Carpentier and Le Doussal
\cite{CLD} for (non-rigorous) arguments using a renormalization-group
approach and links to the freezing transition of spin-glass systems, and Fyodorv, Le Doussal, and Rosso
\cite{FLD} for further information on extreme distributions.
On the mathematical side, numerous results and conjectures have been formulated
for such models; see Duplantier, Rhodes, Sheffield, and Vargas \cite{SDals} and Arguin and Zindy \cite{AZ12} for recent progress.

Our results in this work are a first step in
the study of the process of extrema for the GFF.
In particular, Theorem~\ref{thm-location} is a precise analog of results in
\cite{ABK-a},
while Theorems~\ref{thm-exponential-growth} and \ref{thm-gap}
provide weaker results than  those of \cite{ABK-b}. The results here play an important role in the study of convergence of the maximum of the GFF, see Bramson, Ding and Zeitouni \cite{BDZ13}.

Finally, we note that
a connection between the maximum of the GFF
and the cover time for the random walk has been shown in
Ding, Lee and Peres \cite{DLP10} and Ding \cite{Ding11b}. In particular,
an upper bound on the fluctuation of the cover time for 2D lattice
was shown in \cite{Ding11c} using such a connection, improving
on
previous work of Dembo, Peres, Rosen and Zeitouni \cite{DPRZ04}.
It is worthwhile emphasizing that the precise estimate on the
expectation of the maximum of the GFF
in \cite{BZ10} plays a crucial role in \cite{Ding11c}.

\medskip

\noindent{\bf A word on proof strategy.} A general approach in
the study of the maximum of the GFF, which we also follow,
is to compare the maxima
of the GFF and of
Gaussian processes of relative amenable structures; this is
typically achieved using
comparison theorems for Gaussian processes
(see Lemmas~\ref{lem-sudakov-fernique} and \ref{lem-slepian}).
The first natural
``comparable'' process is the branching random walk (BRW)
which admits a
natural tree structure (although \cite{BDG01} do not use directly
Gaussian comparisons, the BRW features implicitly in their approach).
In \cite{BZ10}, the modified branching random walk
(see Subsection~\ref{sec:MBRW}) was introduced as a finer
approximation of the GFF, based on which a precise (up to additive constant)
estimate on the expectation of the maximum was achieved.

Our work also uses comparisons of
the  GFF with the MBRW/BRW. One obstacle we have to address
is the lack of effective, direct  comparisons for the collection
of near maxima
of two Gaussian processes. We get around this
issue by comparing a certain functional of the GFF,
which could be written as the maximum of a certain associated
Gaussian process. Various such
comparisons between the GFF and the MBRW/BRW are
employed in Section~\ref{sec:comparison}. In particular, we
use a variant of Slepian's inequality that
allows one
to compare the sum of the $m$-largest values for two Gaussian processes.
Afterwards, estimates for the aforementioned functionals of MBRW/BRW are
computed in Section~\ref{sec:MBRW/BRW}. Finally, based on the
estimates of these functionals of the GFF (obtained via comparison),
we deduce our main theorems in Section~\ref{sec:maxima}.

Along the way, another method that was used often is the so-called
\emph{sprinkling} method, which in our case could be seen
as a two-level structure.
The
sprinkling method was developed by
Ajtai, Koml\'os and Szemer\'edi \cite{AKS82} in the study of percolation,
and found its applications later in that area (see, e.g., \cite{ABS04, BNP09}). Under the framework of the
sprinkling method, one
first tries to understand a perturbed version of the targeted random structure, building upon which one then tries to establish properties of the targeted random structure. Such a scheme will be useful if a weak property on the perturbed random structure can be strengthened significantly to the targeted structure with relatively little effort by taking advantage of the perturbation.  In the context of the study of the maximum
of the GFF, the
sprinkling method was first successfully
applied in
\cite{Ding11}; an application
to the study of cover times of random walks appears in \cite{Ding11b}.

\medskip

\noindent{\bf Discussions and open problems.} There are a number of natural open problems in this line of research on the GFF, of which establishing the limiting law of the maximum and the scaling limit of the extreme process are of great interest\footnote{The convergence of the law of the recentered maximum has been recently proved \cite{BDZ13}, using results of the current paper}. Even partial progresses toward these goals could be interesting. For instance, it would be of interest
to provide more information on the joint behavior of the maxima by characterizing other important statistics.
We also point out that we computed the exponent only for the right tail as in Theorem~\ref{thm-right-tail}, but not for the left tail. A conceptual difficulty in computing the exponent in the left tail is that the MBRW has Gaussian type left tail (analogous to BRW) as opposed to doubly-exponential tail in \eqref{eq-concentration} --- the top levels in the MBRW could shift the value of the whole process to the left with a Gaussian type cost in probability,
while in the GFF the Dirichlet boundary condition decouples the GFF near the boundary
such that the GFF behaves almost independently close to the boundary. Therefore, it is possible that a new approximation needs to be introduced in order to study the left tail of the maximum in higher precision (merely using the sprinkling method as done in \cite{Ding11b} seems unlikely to yield even the exponent).
We note that these last comments do not apply to the so-called  {\it massive GFF}, see Chatterjee \cite{Chatterjee08}, which is expected to behave analogously to BBM.

\medskip
\noindent{\bf Three perspectives of Gaussian free field.} A
quick way to rigourously define
the GFF is to give its probability density function.
Denoting by $f$ the
p.d.f. of $(\eta_v)$, we have
\begin{equation}\label{eq-density}
f((x_v)) = Z \mathrm{e}^{-\frac{1}{16} \sum_{u\sim v} (x_u -
x_v)^2}\,,
\end{equation}
where $Z$ is a normalizing constant and $x_v=0$ for $v\in \partial V_N$.
(Note that each edge appears \textit{twice}
in \eqref{eq-density}.)

Alternatively, a slower but more informative way to define
the GFF is by using the connection with random
walks (in particular, Green functions). Consider a connected graph
$G = (V, E)$. For $U \subset V$, the Green function $G_U(\cdot,
\cdot)$ of the discrete Laplacian is given by
\begin{equation}\label{eq-def-green-function}
G_U(x, y) = \E_x(\mbox{$\sum_{k=0}^{\tau_U - 1}$} \one\{S_k =
y\})\,, \mbox{ for all } x\in V\setminus U, y\in V\,,\quad
G_U(x,y)=0 \mbox{ for } x\in U, y\in V\,,
\end{equation}
where $\tau_U$ is the hitting time of the set $U$ for
a simple random walk
$(S_k)$, defined by (the notation applies throughout the paper)
\begin{equation}\label{eq-def-tau-A}
\tau_U = \min\{k\geq 0: S_k \in U\}\,.
\end{equation}
The GFF $\{\eta_v: v\in V\}$ with Dirichlet boundary on $U$ is then
defined
as the mean zero Gaussian process indexed by $V$ such that
the covariance matrix is given by  Green function $(G_U(x, y))_{x,
y\in V}$.
Clearly, $\eta_v = 0$ for all $v\in
U$. For
the
equivalence of definitions in \eqref{eq-density} and \eqref{eq-def-green-function}, c.f., \cite{Janson97}.

Finally, we
recall
the
connection between the GFF and electrical networks.
We can view the 2D box $V_N$ as an electrical network if
each edge is replaced
by a unit resistor and the boundary is wired together. We then associate a classic quantity to the network, the so-called \emph{effective resistance}, which is denoted by $R_{\mathrm{eff}}(\cdot, \cdot)$. The following well-known identity relates
the GFF to the electric network, see, e.g., \cite[Theorem
9.20]{Janson97}.
\begin{equation}\label{eq:resistGFF}
\E\, (\eta_u-\eta_v)^2 = 4\reff(u,v).
\end{equation}
Note that the factor of $4$ above is due to the
non-standard normalization we are using in the
2D lattice (in general, this factor is 1 with a standard normalization).

\medskip
\noindent{\bf Acknowledgement.} We thank two anonymous referees and the editor for useful comments on an earlier version of the manuscript. We thank Vincent
Vargas for pointing out an annoying typo in an earlier version of the paper.

\section{Comparisons with modified branching random walk} \label{sec:comparison}

In this section, we compare the maxima of the
Gaussian free field with those of the so-called modified branching random walk (MBRW), which was introduced in \cite{BZ10}; the advantage of dealing with the MBRW is that its covariance function, like that of the GFF but in contrast with BRW, depends on the Euclidean distance, as we explain next.

\subsection{A short review on MBRW}\label{sec:MBRW}

Consider $N = 2^n$ for some positive integer $n$. For $k\in [n]$, let $\mathcal{B}_k$ be the collection of squared boxes in $\mathbb{Z}^2$ of side length $2^k$ with corners in $\mathbb{Z}^2$, and let $\mathcal{BD}_k$ denote the subsets of $\mathcal{B}_k$ consisting of squares of the form $([0, 2^{k}-1] \cap \mathbb{Z})^2 + (i2^k, j 2^k)$. For $v\in \mathbb{Z}^2$, let $\mathcal{B}_k (v) = \{B\in \mathcal{B}_k: v\in B\}$ be the collection of boxes in $\mathcal{B}_k$ that contains $v$, and define $\mathcal{BD}_k(v)$ be the (unique) box in $\mathcal{BD}_k$ that contains $v$. Furthermore, denote by $\mathcal{B}_k^N$ the subset of $\mathcal{B}_k$ consisting of boxes whose lower left corners are in $V_N$. Let $\{a_{k, B}\}_{k\geq 0, B\in \mathcal{BD}_k}$ be i.i.d.\ standard Gaussian variables, and define the branching random walk to be
\begin{equation}\label{eq-def-BRW}
\vartheta_v = \mbox{$\sum_{k=0}^n$} a_{k, \mathcal{BD}_k(v)}\,.
\end{equation}
For $k\in [n]$ and $B\in \mathcal{B}_k^N$, let $b_{k, B}$ be independent centered Gaussian variables with $\var (b_{k, B}) = 2^{-2k}$, and define
\begin{equation}\label{eq-define-b}
b_{k, B}^N =b_{k, B'}, \mbox{ for } B\sim_N B' \in \mathcal{B}_k^N\,,
\end{equation}
where $B\sim_N B'$ if and only if there exist $i, j\in \mathbb{Z}$ such that $B = (iN, jN) + B'$ (note that for any $B\in \mathcal{B}_k$, there exists a unique $B'\in \mathcal{B}_k^N$ such that $B\sim_N B'$). In
a manner compatible with
definition in \eqref{eq-define-b}, we let $d^N(u, v) = \min_{w\sim_N v}\|u-w\|$ be the Euclidean distance between $u$ and $v$ when considering $V_N$ as a torus, for all $u, v\in V_N$.
Finally, we define the MBRW $\{\xi_v^N: v\in V_N\}$ such that
\begin{equation}\label{eq-MBRW}
\xi_v^N = \mbox{$\sum_{k=0}^n \sum_{B\in \mathcal{B}_k(v)}$} b_{k, B}^N\,.
\end{equation}
The motivation of the above definition is that the MBRW approximates the GFF with high precision. That is to say, the covariance structure
of the MBRW approximates that of the GFF well. This
is elaborated in the next lemma which
compares
their covariances (see \cite[Lemma 2.2]{BZ10} for a proof).

\begin{lemma}\label{lem-covariance}
There exists a constant $C$ such that the following holds with $N=2^n$ for all $n$.
\begin{align*}
|\cov(\xi_u^N, \xi_v^N) - (n - \log_2(d^N(u, v)))|& \leq C, \mbox{ for all } u, v\in V_N\,,\\
\big|\cov(\eta_u^{4N}, \eta_v^{4N}) - \tfrac{2\log 2}{\pi} (n - (0\vee \log_2 \|u-v\|))\big| &\leq C, \mbox{ for all } u, v \in (2N, 2N) + V_{N}\,.
\end{align*}
\end{lemma}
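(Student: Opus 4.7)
The plan is to prove the two bounds separately, since the MBRW covariance is essentially combinatorial while the GFF covariance is potential-theoretic.

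For the first (MBRW) bound, I would exploit the independence of the family $\{b_{k,B}^N\}$ across levels $k$ and equivalence classes $B$ modulo $\sim_N$ to write
\[\cov(\xi_u^N, \xi_v^N) = \sum_{k=0}^n 2^{-2k} \cdot \#\{B \in \mathcal{B}_k(u) : \exists B' \in \mathcal{B}_k(v) \text{ with } B \sim_N B'\}.\]
Under the torus identification, the set being counted at level $k$ is in bijection with boxes of side $2^k$ on the $N \times N$ torus that contain both $u$ and $v$, and a direct product-of-intervals count gives $(2^k - d_x^N(u,v))_+ \, (2^k - d_y^N(u,v))_+$, where $d_x^N, d_y^N$ are coordinate torus distances. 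Multiplied by $2^{-2k}$, the level-$k$ contribution lies in $[0,1]$: it is $1 - O(2^{-k} d^N(u,v))$ once $2^k \gg d^N(u,v)$ and vanishes for $2^k < d^N(u,v)$. Summing over $k \in \{0, \ldots, n\}$, the levels well above the scale $\log_2 d^N(u,v)$ yield $n - \log_2 d^N(u,v) + O(1)$, while the $O(1)$-wide transition window around $k \approx \log_2 d^N(u,v)$ contributes only $O(1)$.

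For the second (GFF) bound, the starting point is that by (\ref{eq-def-green-function}) and the normalization encoded in (\ref{eq:resistGFF}), $\cov(\eta_u^{4N}, \eta_v^{4N})$ is a fixed multiple of the discrete Green function $G_{\partial V_{4N}}(u,v)$. Using the classical representation
\[G_{\partial V_{4N}}(u,v) = \E_u\bigl[a(S_{\tau_{\partial V_{4N}}} - v)\bigr] - a(u-v),\]
where $a$ is the potential kernel of 2D simple random walk satisfying the standard asymptotics $a(x) = \tfrac{2}{\pi}\log \|x\| + \kappa + O(\|x\|^{-2})$, the problem reduces to controlling the harmonic term. For $u, v$ restricted to the central block $(2N, 2N) + V_N$, every point of $\partial V_{4N}$ lies at distance $\Theta(N)$ from both $u$ and $v$, so the harmonic term equals $\tfrac{2}{\pi}\log(4N) + O(1)$ uniformly in $u, v$. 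Subtracting $a(u-v)$ and converting to base-$2$ logarithms via $\log = (\log 2)\log_2$ with $N = 2^n$ produces the claimed prefactor $\tfrac{2\log 2}{\pi}$; the $0 \vee$ clause arises because $a(0) = 0$ while the potential-kernel expansion is valid only for $\|x\| \geq 1$, and the $\log 4$ shift is absorbed in the $O(1)$ error.

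The main obstacle is the boundary analysis for the GFF: the harmonic term $\E_u[a(S_\tau - v)]$ generically depends on the geometry of $u, v$ relative to $\partial V_{4N}$, and it is precisely the central-third restriction $(2N, 2N) + V_N$ that collapses this dependence into an $O(1)$ correction uniformly in $u, v$. Without this restriction one would acquire genuine position-dependent boundary contributions. The MBRW bound is, by contrast, a geometric sum; its only mild subtlety is quantifying the $O(1)$-wide transition window at $k \approx \log_2 d^N(u,v)$, which is routine.
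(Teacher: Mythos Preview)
The paper does not give its own proof of this lemma; it simply refers the reader to \cite[Lemma~2.2]{BZ10}. Your proposed argument is correct and is essentially the standard one (and in particular matches the approach in \cite{BZ10}): the MBRW covariance is computed by counting, at each scale $k$, the number of $2^k$-boxes on the $N$-torus containing both $u$ and $v$, yielding a per-level contribution in $[0,1]$ that is $1-O(2^{-k}d^N(u,v))$ above the scale $\log_2 d^N(u,v)$ and $0$ below it; the GFF covariance is handled via the potential-kernel representation of the Dirichlet Green function together with the asymptotics $a(x)=\tfrac{2}{\pi}\log\|x\|+\kappa+O(\|x\|^{-2})$, with the restriction to the shifted sub-box guaranteeing that every boundary point lies at distance $\Theta(N)$ from both $u$ and $v$.

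Two minor remarks. First, with the paper's normalization (density \eqref{eq-density} and Green function \eqref{eq-def-green-function}) one has $\cov(\eta_u,\eta_v)=G_{\partial V}(u,v)$ exactly, not merely up to a multiplicative constant, so no extra factor needs to be tracked. Second, the sub-box $(2N,2N)+V_N$ inside $V_{4N}$ is not literally a ``central third''---its distance to $\partial V_{4N}$ ranges between $N$ and $2N$---but of course this only affects the implicit $O(1)$ constant and your argument goes through unchanged.
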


\subsection{Comparison of the maximal sum over restricted pairs}

In this subsection, we approximate the GFF by the MBRW based on the following comparison theorem on the expected maximum of Gaussian process (See e.g., \cite{Fernique75} for a proof).
\begin{lemma}[Sudakov-Fernique]\label{lem-sudakov-fernique}
Let $\mathcal{A}$ be an arbitrary finite index set and let $\{X_a\}_{a\in \mathcal{A}}$ and $\{Y_a\}_{a\in \mathcal{A}}$ be two centered Gaussian processes such that
\begin{equation}\label{eq-compare-assumption}
\E(X_a - X_b)^2 \geq \E (Y_a - Y_b)^2\,, \mbox{ for all } a, b \in \mathcal{A}\,.
\end{equation}
Then $\E \max_{a\in \mathcal{A}} X_a \geq \E \max_{a\in \mathcal{A}} Y_a$.
\end{lemma}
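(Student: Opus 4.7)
The plan is to prove Lemma~\ref{lem-sudakov-fernique} via a Gaussian interpolation argument applied to a smoothed version of the maximum. On an enlarged probability space, I would realize independent copies of $\{X_a\}_{a\in\mathcal{A}}$ and $\{Y_a\}_{a\in\mathcal{A}}$ and form the interpolating family $Z_a(t) = \sqrt{t}\,X_a + \sqrt{1-t}\,Y_a$ for $t\in[0,1]$, so that $Z(0)=Y$ and $Z(1)=X$, with $Z(t)$ centered Gaussian of covariance $t\cov(X_a,X_b) + (1-t)\cov(Y_a,Y_b)$. The smoothed maximum will be the log-sum-exp
\[
F_\beta(z) \df \frac{1}{\beta}\log \sum_{a\in \mathcal{A}} \mathrm{e}^{\beta z_a}, \qquad \max_a z_a \le F_\beta(z) \le \max_a z_a + \frac{\log |\mathcal{A}|}{\beta},
\]
so that the lemma will follow by sending $\beta\to\infty$ after showing $\E F_\beta(X) \ge \E F_\beta(Y)$ for every $\beta>0$.

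Next I would differentiate $t \mapsto \E F_\beta(Z(t))$. Writing $p_a(z) = \mathrm{e}^{\beta z_a}/\sum_c \mathrm{e}^{\beta z_c}$, one has $\partial_a F_\beta = p_a$ and $\partial_a\partial_b F_\beta = \beta(p_a \delta_{ab} - p_a p_b)$. Since $|p_a|\le 1$, dominated convergence permits differentiation under the expectation and yields
\[
\frac{d}{dt}\E F_\beta(Z(t)) = \sum_a \E\!\left[ p_a(Z(t)) \cdot \tfrac{1}{2}\!\left(\tfrac{X_a}{\sqrt{t}} - \tfrac{Y_a}{\sqrt{1-t}}\right)\right].
\]
Applying Gaussian integration by parts (Stein's identity) to each term, using independence of $X$ and $Y$ and the chain rule $\partial_{X_b} p_a(Z(t)) = \sqrt{t}\,\partial_a\partial_b F_\beta(Z(t))$, the $\sqrt{t}$ and $\sqrt{1-t}$ factors cancel and the derivative collapses to
\[
\frac{d}{dt}\E F_\beta(Z(t)) = \frac{\beta}{2}\,\E\!\left[\sum_a \Delta_{aa}\, p_a - \sum_{a,b}\Delta_{ab}\, p_a p_b\right],
\qquad \Delta_{ab} \df \cov(X_a,X_b) - \cov(Y_a,Y_b).
\]

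The key algebraic step is to symmetrize the first sum using $\sum_b p_b=1$ and the trivial identity $\Delta_{aa} = \tfrac{1}{2}(\Delta_{aa}+\Delta_{bb})$, so that the bracket becomes
\[
\frac{1}{2}\sum_{a,b}\bigl(\Delta_{aa} + \Delta_{bb} - 2\Delta_{ab}\bigr)\, p_a p_b
= \frac{1}{2}\sum_{a,b}\bigl[\E(X_a-X_b)^2 - \E(Y_a-Y_b)^2\bigr]\, p_a p_b.
\]
By the hypothesis \eqref{eq-compare-assumption}, each term of this sum is nonnegative, so the derivative is nonnegative on $(0,1)$. Hence $\E F_\beta(X) \ge \E F_\beta(Y)$, and taking $\beta\to\infty$ gives $\E \max_a X_a \ge \E \max_a Y_a$.

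The only genuine obstacle is a technical one: justifying the Gaussian integration by parts and the interchange of differentiation and expectation on the open interval $t\in(0,1)$ (the endpoints are not needed thanks to continuity of $t\mapsto \E F_\beta(Z(t))$ in $t\in[0,1]$, which follows from Fatou together with the uniform bound $|F_\beta(Z(t))|\le \beta^{-1}\log |\mathcal{A}| + \max_a |Z_a(t)|$ and the finiteness of $\E \max_a|Z_a(t)|$). Since $\mathcal{A}$ is finite and the partial-derivative bounds $|\partial_a\partial_b F_\beta|\le \beta$ are uniform in $z$, this is routine.
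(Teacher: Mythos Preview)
Your proof is correct and is in fact the standard Gaussian interpolation argument for the Sudakov--Fernique inequality. The paper does not supply its own proof of Lemma~\ref{lem-sudakov-fernique}; it simply cites \cite{Fernique75}. So there is no ``paper's proof'' to compare against, but your argument is essentially the textbook one (see e.g.\ \cite[Theorem~3.11 and its proof]{LT91}, which the paper itself invokes later in Lemma~\ref{lem-sudakov-fernique-extension}): interpolate via $Z(t)=\sqrt{t}\,X+\sqrt{1-t}\,Y$, apply Gaussian integration by parts to $\tfrac{d}{dt}\E F_\beta(Z(t))$ with the log-sum-exp smoothing, and observe that only the differences $\E(X_a-X_b)^2-\E(Y_a-Y_b)^2$ survive because the weights $p_a$ sum to one. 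The technical justifications you flag (dominated convergence, bounded second derivatives of $F_\beta$, continuity at the endpoints) are indeed routine for a finite index set.
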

Instead of directly comparing the expected maximum as in \cite{BZ10}, we compare the following two functionals for GFF and MBRW respectively. For an integer $r$, define
\begin{align}
\eta^\diamond_{N, r}& = \max\{\eta^N_v + \eta^N_u: u, v \in V_N, r\leq \|u - v\|\leq N/r\}\,, \label{eq-def-gff-sum}\\
 \xi^\diamond_{N, r} &= \max\{\xi^N_v + \xi^N_u: u, v \in V_N, r\leq \|u - v\|\leq N/r\}\,. \nonumber
\end{align}
The main goal in this subsection is to prove the following.
\begin{prop}\label{prop-compare-1}
There exists
a constant $\kappa \in \mathbb{N}$ such that for all $r$, $n\geq \kappa$ positive integers, and with $N = 2^n$,
$$ \sqrt{\tfrac{2\log 2}{\pi}}\E \xi^\diamond_{2^{-\kappa} N, r} \leq \E \eta^\diamond_{N, r} \leq \sqrt{\tfrac{2\log 2}{\pi}}\E \xi^\diamond_{2^\kappa N, r}\,.$$
\end{prop}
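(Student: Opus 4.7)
The plan is to apply the Sudakov--Fernique comparison (Lemma~\ref{lem-sudakov-fernique}), but to a Gaussian process indexed by pairs rather than by single vertices as in \cite{BZ10}. For the upper bound, I would embed $V_N$ centrally in $V_{2^\kappa N}$ via a translation $c$, and on the common index set of pairs $(u,v) \in V_N^2$ with $r \le \|u-v\| \le N/r$, define
\[
X_{(u,v)} = \eta^N_u + \eta^N_v, \qquad Y_{(u,v)} = \sqrt{\tfrac{2\log 2}{\pi}}\bigl(\xi^{2^\kappa N}_{u+c} + \xi^{2^\kappa N}_{v+c}\bigr).
\]
Since for $\kappa \ge 1$ every shifted pair $(u+c, v+c)$ satisfies the looser constraint $r \le \|u-v\| \le N/r \le 2^\kappa N/r$, we have $\max_{(u,v)} Y_{(u,v)} \le \sqrt{\tfrac{2\log 2}{\pi}}\,\xi^\diamond_{2^\kappa N, r}$. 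Thus it suffices to verify, for every two indices $a,b$ in the pair set, the 4-point variance bound $\E(X_a - X_b)^2 \le \E(Y_a - Y_b)^2$.

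Expanding these variances as signed sums of the pairwise covariances (the four diagonal variances enter in cancellation) and substituting the approximations from Lemma~\ref{lem-covariance}, the $n$-dependent contributions cancel identically by the balance $\sum \epsilon = 0$, leaving on each side (up to bounded error)
\[
\tfrac{4\log 2}{\pi}\bigl[\log_2\|u-u'\| + \log_2\|u-v'\| + \log_2\|v-u'\| + \log_2\|v-v'\| - \log_2\|u-v\| - \log_2\|u'-v'\|\bigr],
\]
with the convention that $\log_2\|x-y\|$ is replaced by $0$ when $x=y$, and where for $\kappa \ge 2$ the toral distance on $V_{2^\kappa N}$ coincides with the Euclidean distance on the embedded subbox. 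Thus the two leading terms match exactly, and the 4-point comparison reduces to an absorption of a uniform $O(1)$ error term with the correct sign.

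I expect the main technical obstacle to be precisely this uniform $O(1)$ absorption. The parameter $\kappa$ is what creates the required slack: the extra $\kappa$ hierarchical levels of the MBRW on $V_{2^\kappa N}$, together with the Dirichlet-boundary variance deficit of $\eta^N$ near $\partial V_N$, ought to produce a strict excess on the right-hand side once $\kappa$ is taken sufficiently large, uniformly in the 4-tuple. Some care is required in the degenerate regime where two or more of $u,v,u',v'$ coincide, since there the asymptotic of Lemma~\ref{lem-covariance} is inadequate and one has to fall back on exact formulas for the diagonal variances and for pairs of MBRW variables at lattice distance $1$.

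The lower bound $\sqrt{\tfrac{2\log 2}{\pi}}\,\E \xi^\diamond_{2^{-\kappa}N,r} \le \E \eta^\diamond_{N,r}$ is handled by the mirror argument: embed $V_{2^{-\kappa}N}$ centrally in $V_N$, swap the roles of MBRW and GFF in the definitions of $X$ and $Y$, and verify the reversed 4-point variance inequality so that Sudakov--Fernique now yields the comparison in the opposite direction.
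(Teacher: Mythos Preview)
Your proposal has a genuine gap: the translation-only embedding $v\mapsto v+c$ does \emph{not} create any $\kappa$-dependent slack in the four-point increment variances, so there is no mechanism to absorb the $O(1)$ error by taking $\kappa$ large.

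To see this, note that for the pair-indexed process the increment $\E(Y_a-Y_b)^2$ is a signed sum of covariances in which, as you correctly observe, the constant parts cancel: the coefficient of the leading $(n+\kappa)$ term is $4\cdot 1 + 2\cdot 2 - 4\cdot 2 = 0$. Under pure translation, $d^{2^\kappa N}(u+c,v+c)=\|u-v\|$ for centrally placed points, so by Lemma~\ref{lem-covariance} one has $\cov(\xi^{2^\kappa N}_{u+c},\xi^{2^\kappa N}_{v+c}) = (n+\kappa)-\log_2\|u-v\|+O(1)$, and after cancellation the right-hand side equals exactly the log-distance combination you wrote plus an $O(1)$ constant that is \emph{independent of $\kappa$}. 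The extra top levels (boxes of size $\geq N$) contribute an amount of order $\sum_{k>n}\|u-v\|/2^k=O(1)$ to any two-point increment, and this does not grow with $\kappa$. Hence the comparison you need cannot be forced by choosing $\kappa$ large.

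The paper's proof fixes this by using a \emph{dilated} embedding $\psi_N(v)=(2^{\kappa-2}N,2^{\kappa-2}N)+2^{\kappa-3}v$. Under dilation, distances scale as $\|\psi_N(u)-\psi_N(v)\|=2^{\kappa-3}\|u-v\|$, so the MBRW covariance becomes $(n+\kappa)-\log_2(2^{\kappa-3}\|u-v\|)=n+3-\log_2\|u-v\|+O(1)$: the variance grows with $\kappa$ while the covariance does not, and therefore each two-point increment $\E(\xi^{2^\kappa N}_{\psi_N(u)}-\xi^{2^\kappa N}_{\psi_N(v)})^2$ grows linearly in $\kappa$. This is precisely the slack needed to dominate the $O(1)$ errors in \eqref{eq-distance-compare}. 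A secondary issue is that Lemma~\ref{lem-covariance} for the GFF is stated only for the central sub-box of $V_{4N}$, not for $\eta^N$ on all of $V_N$; the paper handles this by first passing to $\tilde\eta^\diamond_{N,r}$ via Lemma~\ref{lem-monotone}, which you have omitted.
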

In order to prove the preceding proposition, it is convenient to consider
\begin{equation}\label{eq-tilde-gff-sum}
\tilde{\eta}^\diamond_{N, r} = \max\{\eta^{4N}_{v+(2N, 2N)} + \eta^{4N}_{u+(2N, 2N)}: u, v \in V_N, r\leq \|u - v\|\leq N/r\}\,.
\end{equation}
We start with proving the next useful lemma.
\begin{lemma}\label{lem-monotone}
Using the above notation, we have \\
\noindent (i) $\E \eta^\diamond_{N, r} \leq \E \tilde{\eta}^\diamond_{N, r}$;\\
\noindent (ii) $\P(\max_{v\in V_N} \eta^N_v \geq \lambda) \leq 2 \P(\max_{v\in V_N} \eta^{4N}_{v+(2N, 2N)} \geq \lambda)$ for all $\lambda \in \mathbb{R}$.
\end{lemma}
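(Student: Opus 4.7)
The plan is to exploit the Gibbs--Markov (domain-Markov) property of the discrete GFF. Setting $D = V_N + (2N, 2N) \subset V_{4N}$, the standard decomposition writes
\begin{equation*}
\eta^{4N}_{v+(2N, 2N)} = \hat\eta_v + h_v, \qquad v \in V_N,
\end{equation*}
where $\hat\eta$ is a zero-boundary GFF on $V_N$ (so $\hat\eta \stackrel{d}{=} \eta^N$), $h$ is the discrete harmonic extension into $D$ of the boundary values of $\eta^{4N}$ on $\partial D$, and $\hat\eta$ is independent of $h$. In particular each $h_v$ is a centered Gaussian. Both assertions of the lemma will follow by analyzing, separately, how adding this independent mean-zero ``harmonic noise'' $h$ can only increase the relevant functionals.

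For part (i), I would observe that the map
\begin{equation*}
x \mapsto F(x) \;:=\; \max\bigl\{\hat\eta_u + \hat\eta_v + x_u + x_v : u, v \in V_N,\ r \le |u-v| \le N/r\bigr\}
\end{equation*}
is a maximum of affine functions of $x \in \R^{V_N}$, hence convex. Conditioning on $\hat\eta$ and applying Jensen's inequality to the mean-zero independent vector $h$ gives $\E[F(h) \mid \hat\eta] \ge F(0) = \eta^\diamond_{N, r}$; taking expectations yields (i).

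For part (ii), let $v^\star \in V_N$ be the (lexicographically smallest) site achieving $\max_v \hat\eta_v$, so that $v^\star$ is a measurable function of $\hat\eta$. Since $h$ is independent of $\hat\eta$ and $h_{v^\star}$ is a centered Gaussian conditional on $\hat\eta$, one has $\P(h_{v^\star} \ge 0 \mid \hat\eta) \ge \tfrac12$. Therefore
\begin{equation*}
\P\bigl(\max_v (\hat\eta_v + h_v) \ge \lambda \,\bigm|\, \hat\eta\bigr) \;\ge\; \P(\hat\eta_{v^\star} + h_{v^\star} \ge \lambda \mid \hat\eta) \;\ge\; \tfrac12\,\1\{\hat\eta_{v^\star} \ge \lambda\},
\end{equation*}
and taking expectations together with $\hat\eta \stackrel{d}{=} \eta^N$ yields (ii). I do not anticipate any serious obstacle in this argument; the only points requiring a line of care are the invocation of the domain-Markov decomposition (which is classical, see e.g.\ \cite{Janson97}) and the measurability of $v^\star$, which is a trivial tie-breaking issue.
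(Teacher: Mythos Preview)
Your proof is correct and follows essentially the same approach as the paper: both use the domain-Markov decomposition $\eta^{4N}_{\cdot+(2N,2N)} \stackrel{d}{=} \hat\eta_\cdot + h_\cdot$ with $\hat\eta$ a GFF on $V_N$ independent of the centered harmonic piece $h$, and then exploit that adding $h$ can only help. Your Jensen-inequality phrasing for (i) is a clean repackaging of the paper's argument (which picks the maximizing pair $(\tau_1,\tau_2)$ for $\hat\eta$ and notes $\E[h_{\tau_1}+h_{\tau_2}]=0$), and your part (ii) is identical to the paper's.
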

\begin{proof}
Denote by $V'_N = \{v+(2N, 2N): v\in V_N\}$, and consider the process
$\eta^N_\cdot$ as
indexed over the set $V'_N$ (so it is a Gaussian free field on $V_N$ with index shifted by $(2N, 2N)$).
Note that the conditional covariance matrix of
$\{\eta^{4N}_v\}_{v\in V'_N}$ given the values of
$\{\eta^{4N}_v\}_{v\in V_{4N}\setminus V'_N}$ corresponds to the
covariance matrix of $\{\eta^{N}_v\}_{v\in V'_N}$. This implies that
\begin{equation}\label{eq-decomposition}
\{\eta^{4N}_v: v\in V'_N\}\stackrel{law}{=} \{\eta^{N}_v + \E(\eta^{4N}_v \mid \{\eta^{4N}_u: u\in V_{4N} \setminus V'_N\}):  v\in V'_N\}\,,\end{equation}
where on the right hand side $\{\eta^{N}_v: v\in V'_N\}$ is
independent of $\{\eta^{4N}_u: u\in V_{4N}\setminus V'_N\}$ and both are defined on the same probability space. Write
$$\phi_v = \E(\eta^{4N}_v \mid \{\eta^{4N}_u: u\in V_{4N} \setminus
V'_N\})=
 \E(\eta^{4N}_v \mid \{\eta^{4N}_u: u\in \partial
V'_N\})
.$$
Note that $\phi_v$ is a linear combination of
$\{\eta^{4N}_u: u\in \partial
 V'_N\}$, and thus a mean zero
Gaussian variable. By the above identity in law and the independence, we derive that
$$\E \tilde{\eta}^\diamond_{N, r}  \geq \E (\eta^{\diamond}_{N, r} + \phi_{\tau_1} + \phi_{\tau_2}) = \E \eta^\diamond_{N, r}\,,$$
where $(\tau_1, \tau_2)$ is the pair at which the sum in the definition of $\eta^\diamond_{N, r}$ is maximized (see \eqref{eq-def-gff-sum} and \eqref{eq-tilde-gff-sum}), and the second equality follows from the fact that $\phi_{\tau_1}$ and $\phi_{\tau_2}$ has mean 0. This completes the proof of Part (i). Part (ii) follows from the same argument, by noting that almost surely,
$$\max_{v\in V'_N} \eta^{4N}_v \geq \max_{v\in V_N} (\eta^N_v)' + \phi_\tau\,,$$
where $\tau \in V'_N$ is the maximizer for $\{\eta^{4N}_v: v\in V'_N\}$ and , for $v\in V_N$,
$(\eta_v^N)'=\eta_{v+(2N,2N)}^{4N}-\E(\eta_{v+(2N,2N)}\,|\, \{\eta_u^{4N}, u\in \partial V_N')$ is distributed like a GFF. The desired bound follows from the fact that $\phi_\tau$ given the location $\tau$ is a centered Gaussian variable independent of $(\max_{v\in V_N} \eta^N_v)'$.
\end{proof}

\noindent {\bf Proof of Proposition~\ref{prop-compare-1}.} For the upper bound, by the preceding lemma, it suffices to prove that $\E \tilde{\eta}^\diamond_{N, r} \leq \E \xi^\diamond_{2^\kappa N, r}$. For this purpose, define the mapping $\psi_N : V_N \mapsto V_{2^\kappa N}$ by
\begin{equation}\label{eq-def-psi}\psi_N(v) = (2^{\kappa - 2} N, 2^{\kappa-2} N) + 2^{\kappa-3} v\,, \mbox{ for } v\in V_N\,.\end{equation}
Applying Lemma~\ref{lem-covariance}, we obtain that there exists sufficiently large $\kappa$ (that depends only on the universal constant $C$ in Lemma~\ref{lem-covariance}) such that for all $v, u, v', u'\in V_N$,
\begin{align}\label{eq-distance-compare}
&\E (\eta^{4N}_{v+(2N, 2N)} + \eta^{4N}_{u+(2N, 2N)} - \eta^{4N}_{v'+(2N, 2N)} - \eta^{4N}_{u'+(2N, 2N)})^2 \nonumber \\
\leq&\tfrac{2\log 2}{\pi}\E(\xi^{2^\kappa N}_{\psi_N(v)} + \xi^{2^\kappa N}_{\psi_N(u)}- \xi^{2^\kappa N}_{\psi_N(v')} - \xi^{2^\kappa N}_{\psi_N(u')})^2\,.\end{align}
In order to verify \eqref{eq-distance-compare} , note from the definition that the variance of $\xi^{2^\kappa N}_{\psi_N(v)}$ grows with $\kappa$ (more levels are involved) while, for all $u, v\in V_N$, the covariance between $\xi^{2^\kappa N}_{\psi_N(v)}$ and $\xi^{2^\kappa N}_{\psi_N(u)}$ does not grow (the number of common levels remains constant, since points are taken farther away due to the definition of $\psi_N$). This observation allows us to select $\kappa$ large so as to increase the right hand side in \eqref{eq-distance-compare}. Now, an application of Lemma~\ref{lem-sudakov-fernique} for the processes
\begin{align*}
&\{\eta^{4N}_{v+(2N, 2N)} + \eta^{4N}_{u+(2N, 2N)}: u, v \in V_N, r\leq \|v-u\|\leq N/r\}\,,\\
\mbox{ and }\qquad&\{\sqrt{\tfrac{2\log 2}{\pi}}(\xi^{2^\kappa N}_{\psi_N(v)} + \xi^{2^\kappa N}_{\psi_N(u)}): u, v \in V_N, r\leq \|v-u\|\leq N/r\}
\end{align*}
yields that $\E \tilde{\eta}^{\diamond}_{N, r}\leq \sqrt{\tfrac{2\log 2}{\pi}}\E \xi^\diamond_{2^\kappa N, r}$. Here we used the fact that $r\leq \|\psi_N(v) - \psi_N(u)\| \leq 2^\kappa N/r$ for all $u, v\in V_N$ such that $r\leq \|v-u\| \leq N/r$.

The lower bound follows along the same line, which we now
sketch. Analogous to \eqref{eq-distance-compare}, we can derive that for all $u, v, u', v'\in V_{2^{-\kappa}N}$
\begin{equation*}
\E (\eta^{N}_{\psi_{2^{-\kappa} N}(v)} + \eta^N_{\psi_{2^{-\kappa}N}(u)} - \eta^{N}_{\psi_{2^{-\kappa} N}(v')} - \eta^N_{\psi_{2^{-\kappa}N}(u')})^2 \geq \tfrac{2\log 2}{\pi}\E(\xi^{2^{-\kappa} N}_{v} + \xi^{2^{-\kappa} N}_{u}- \xi^{2^{-\kappa}N}_{v'} - \xi^{2^{-\kappa} N}_{u'})^2\,.\end{equation*}
Combined with the fact that $r\leq \|\psi_{2^{-\kappa}N} (v) - \psi_{2^{-\kappa}N}(u)\| \leq N/r$ for all $u, v\in V_{2^{-\kappa} N}$ such that $r\leq \|u-v\| \leq 2^{-\kappa} N/r$, another application of Lemma~\ref{lem-sudakov-fernique} completes the proof.

\subsection{Comparison of the right tail for the maximum}

In this subsection, we compare the maximum of GFF with that of MBRW in the sense of ``stochastic domination'', for which we will use Slepian's \cite{Slepian62} comparison lemma.
\begin{lemma}[Slepian]\label{lem-slepian}
Let $\mathcal{A}$ be an arbitrary finite index set and let $\{X_a\}_{a\in \mathcal{A}}$ and $\{Y_a\}_{a\in \mathcal{A}}$ be two centered Gaussian processes such that \eqref{eq-compare-assumption} holds and $\var X_a = \var Y_a$ for all $a\in \mathcal{A}$.
Then $\P(\max_{a\in \mathcal{A}} X_a \geq \lambda) \geq \P(\max_{a\in \mathcal{A}} Y_a \geq \lambda)$, for all $\lambda\in \mathbb{R}$.
\end{lemma}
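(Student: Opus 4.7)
The plan is to prove Slepian's inequality via the standard Gaussian interpolation argument. First, realize $\{X_a\}$ and $\{Y_a\}$ on a common probability space so that they are independent, and for $t\in[0,1]$ define the interpolating process $Z_a(t) = \sqrt{1-t}\, X_a + \sqrt{t}\, Y_a$, so that $Z(0)$ has the law of $X$ and $Z(1)$ has the law of $Y$. The goal is equivalent to showing $\P(\max_a X_a < \lambda) \leq \P(\max_a Y_a < \lambda)$, which I will establish by approximating the indicator of $\{\max_a x_a < \lambda\}$ by smooth functions and tracking the derivative in $t$.

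Next, fix $\e>0$ and choose a smooth nondecreasing $g_\e:\R\to[0,1]$ with $g_\e(z)=0$ for $z\le 0$ and $g_\e(z)=1$ for $z\ge \e$, and put
\begin{equation*}
f_\e(x) = \prod_{a\in\mathcal{A}} g_\e(\lambda - x_a).
\end{equation*}
Then $f_\e$ is smooth with bounded derivatives of all orders and satisfies $f_\e(x)\uparrow \mathbf 1\{\max_a x_a < \lambda\}$ pointwise as $\e\downarrow 0$. A direct computation gives $\partial_a f_\e \le 0$ for every $a$, and crucially
\begin{equation*}
\partial_a\partial_b f_\e(x) = g_\e'(\lambda-x_a)\,g_\e'(\lambda-x_b)\prod_{c\ne a,b} g_\e(\lambda - x_c) \ge 0 \qquad (a\ne b).
\end{equation*}

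The heart of the argument is the Gaussian interpolation identity: by differentiating under the expectation and applying Gaussian integration by parts (the identity $\E[W h(W,\cdot)] = \sum \E[W W_i]\E[\partial_i h(W,\cdot)]$ for a centered Gaussian vector $W$ independent of the other arguments), one obtains
\begin{equation*}
\frac{d}{dt}\E f_\e(Z(t)) = \tfrac12\sum_{a,b\in\mathcal{A}}\bigl(\E Y_a Y_b - \E X_a X_b\bigr)\,\E\,\partial_a\partial_b f_\e(Z(t)).
\end{equation*}
The hypothesis $\var X_a = \var Y_a$ makes every diagonal term ($a=b$) vanish, and the identity $\E X_a X_b = \tfrac12(\var X_a + \var X_b - \E(X_a-X_b)^2)$ combined with the variance equality and the assumption $\E(X_a-X_b)^2 \ge \E(Y_a-Y_b)^2$ gives $\E Y_a Y_b - \E X_a X_b \ge 0$ for $a\ne b$. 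Since the second-order partials are nonnegative off-diagonal, the derivative is $\ge 0$, so $\E f_\e(X) \le \E f_\e(Y)$. Letting $\e\downarrow 0$ and applying monotone convergence yields $\P(\max_a X_a < \lambda) \le \P(\max_a Y_a < \lambda)$, as desired.

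The main technical point to be careful about is the justification of the interpolation formula (legitimacy of differentiating under the expectation and the integration-by-parts step), which is routine given that $f_\e$ and its derivatives are bounded and the Gaussian density is smooth and rapidly decaying; nothing else in the argument poses a real obstacle. The reason the equal-variance hypothesis is essential (in contrast to Sudakov--Fernique in Lemma~\ref{lem-sudakov-fernique}) is precisely that it kills the diagonal contribution, which has no definite sign in general since $\partial_a^2 f_\e$ need not be nonnegative.
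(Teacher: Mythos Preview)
Your proof is correct and follows the standard Gaussian interpolation argument for Slepian's inequality. The paper itself does not supply a proof of this lemma: it is stated as a classical result with a citation to Slepian's original 1962 paper, so there is nothing to compare against beyond noting that your argument is the usual one (essentially the same machinery the paper later invokes from \cite{LT91} in the proof of Lemma~\ref{lem-sudakov-fernique-extension}). Your closing remark about why the equal-variance hypothesis is needed---to kill the diagonal terms whose sign is uncontrolled---is exactly the point the paper's Remark after the lemma is gesturing at.
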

\noindent{\bf Remark.} The additional assumption on the identical variance in Lemma \ref{lem-slepian} allows for a comparison of maxima of fields that goes beyond comparisons of expectations. On the down side, it forces us to modify our fields before we can apply the lemma.

The main result of this subsection is the following.
\begin{lemma}\label{lem-compare-2}
There exists a universal integer $\kappa>0$ such that for all $N$ and $\lambda\in \mathbb{R}$
$$\tfrac{1}{2}\P(\mbox{$\max_{v\in V_{2^{-\kappa} N}}$}\sqrt{\tfrac{2\log 2}{\pi}} \xi^{2^{-\kappa} N}_v \geq \lambda) \leq \P\big(\mbox{$\max_{v\in V_N}$} \eta^N_v \geq \lambda\big) \leq 4 \P\big(\mbox{$\max_{v\in V_{2^\kappa N}}$}\sqrt{\tfrac{2\log 2}{\pi}} \xi^{2^\kappa N}_v \geq \lambda\big)\,.$$
\end{lemma}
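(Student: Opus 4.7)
I would apply Slepian's inequality (Lemma~\ref{lem-slepian}) after an auxiliary modification that equalizes the variances of the two Gaussian fields, as the remark after that lemma anticipates. The skeleton imitates the proof of Proposition~\ref{prop-compare-1}, with Slepian replacing Sudakov--Fernique.

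For the upper bound, I would first apply Lemma~\ref{lem-monotone}(ii) to reduce, at cost of a factor $2$, to comparing $\max_{v\in V_N}\eta^{4N}_{v+(2N,2N)}$ with the scaled MBRW; this moves the indexing vertices into the interior of $V_{4N}$, where the estimates in Lemma~\ref{lem-covariance} are applicable. Using an embedding $\psi_N \colon V_N \to V_{2^\kappa N}$ analogous to~\eqref{eq-def-psi} and setting $X_v = \sqrt{\tfrac{2\log 2}{\pi}}\,\xi^{2^\kappa N}_{\psi_N(v)}$, I would introduce a single ``global'' standard Gaussian $g$ and per-vertex independent centered Gaussians $\{g_v\}_{v\in V_N}$, all mutually independent and independent of $\eta$, and form
\[
\tilde\eta_v \;=\; \eta^{4N}_{v+(2N,2N)} + \alpha\, g + g_v.
\]
The parameters $\alpha\geq 0$ and $\var(g_v)\geq 0$ are chosen, via Lemma~\ref{lem-covariance}, so that $\var(\tilde\eta_v) = \var(X_v)$ for every $v$; this is feasible once $\kappa$ is sufficiently large. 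The key verification is that for all $u\neq v$,
\[
\cov(\tilde\eta_u,\tilde\eta_v) - \cov(X_u,X_v) \;=\; \cov\bigl(\eta^{4N}_{u+(2N,2N)}, \eta^{4N}_{v+(2N,2N)}\bigr) + \alpha^2 - \tfrac{2\log 2}{\pi}\cov\bigl(\xi^{2^\kappa N}_{\psi_N(u)}, \xi^{2^\kappa N}_{\psi_N(v)}\bigr) \;\geq\; 0,
\]
which follows from Lemma~\ref{lem-covariance} together with the scaling in $\psi_N$ once $\kappa$ and $\alpha$ are large enough to dominate the $O(1)$ error in that lemma. Slepian's inequality then gives $\P(\max_v \tilde\eta_v \geq \lambda) \leq \P(\max_{w\in V_{2^\kappa N}} \sqrt{\tfrac{2\log 2}{\pi}}\,\xi^{2^\kappa N}_w \geq \lambda)$.

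To pass from $\tilde\eta$ back to $\eta^{4N}$, I would condition on the maximizer $v^*\in V_N$ of $\eta^{4N}_{v+(2N,2N)}$: by the independence of $g$ and $g_{v^*}$ from $\eta$, the event $\{g\geq 0,\, g_{v^*}\geq 0\}$ has conditional probability at least $1/4$, and on it $\max_v \tilde\eta_v \geq \max_v \eta^{4N}_{v+(2N,2N)}$. Combining this with the factor from Lemma~\ref{lem-monotone}(ii) and with Slepian's inequality yields the claimed upper bound up to a universal constant (one factor of $2$ is absorbed into $\kappa$ by a slightly sharper accounting). The lower bound is proved symmetrically: embed $V_{2^{-\kappa}N}$ into the center of $V_N$ through the map used for the lower bound of Proposition~\ref{prop-compare-1}, instead modify the scaled MBRW (not the GFF) by adding independent Gaussians to equalize variances pointwise, verify the reversed Slepian covariance inequality via Lemma~\ref{lem-covariance}, and then use the same conditioning on the maximizer of the MBRW to produce the factor $1/2$.

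\textbf{Main obstacle.} The delicate point is to arrange three constraints simultaneously: the pointwise variance identity $\var(\tilde\eta_v) = \var(X_v)$, the non-negativity of the auxiliary variances $\alpha^2$ and $\var(g_v)$, and the Slepian covariance inequality above. Each becomes tighter as the $O(1)$ error in Lemma~\ref{lem-covariance} grows, and the only available slack is the $2^\kappa$-factor built into the scaling of $\psi_N$; this forces $\kappa$ to exceed a threshold depending on the universal constant in Lemma~\ref{lem-covariance}, which is precisely why $\kappa$ in the statement is a universal constant that must be taken sufficiently large.
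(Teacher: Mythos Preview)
Your overall strategy is correct and matches the paper's: reduce to the central sub-box via Lemma~\ref{lem-monotone}(ii), embed through a map like $\psi_N$, equalize variances by adding auxiliary Gaussians, and invoke Slepian. The difference lies in the auxiliary construction. The paper uses a \emph{single} standard Gaussian $X$ with vertex-dependent coefficients $a_v$, setting $\eta^{4N}_{v+(2N,2N)}+a_vX$; because the GFF variance in the central box varies by at most $O(1)$ (their \eqref{eq-var-uniform}) while the MBRW variance is exactly constant, one gets $|a_u-a_v|\le C$ and hence the Slepian increment bound. Crucially, the cross-term $a_ua_v$ is of order $\kappa$, so this single Gaussian simultaneously matches variances \emph{and} boosts the off-diagonal covariances; only one sign event $\{X\ge0\}$ is needed, giving the stated constants $4$ and $1/2$.

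Your two-field construction ($\alpha g$ plus independent $g_v$'s) does work for the upper bound, but it costs two sign events, so you obtain $8$ rather than $4$; the claim that the extra factor of $2$ can be ``absorbed into $\kappa$'' is not correct---enlarging $\kappa$ increases the MBRW tail but does not produce a clean factor of $2$. More importantly, your description of the lower bound (``adding independent Gaussians to equalize variances pointwise'') is not sufficient as written: if you add only the per-vertex $g_v$'s to the MBRW, the off-diagonal covariances are unchanged, and a direct computation from Lemma~\ref{lem-covariance} shows
\[
\cov\bigl(\eta^N_{\psi(u)},\eta^N_{\psi(v)}\bigr)-\tfrac{2\log 2}{\pi}\cov\bigl(\xi^{2^{-\kappa}N}_u,\xi^{2^{-\kappa}N}_v\bigr)=\tfrac{2\log 2}{\pi}\cdot 3+O(1)>0,
\]
so the reversed Slepian inequality fails. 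You must also add the global $g$ on the MBRW side (yielding factor $1/4$, not $1/2$), or---as the paper does---use a single Gaussian with vertex-dependent coefficients, whose cross-term supplies the needed covariance boost and recovers the constant $1/2$.
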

\begin{proof}
We first prove the upper bound in the comparison. In light of Part (ii) of Lemma~\ref{lem-monotone}, it suffices to consider the maximum of GFF in a smaller central box (of half size), with the convenience that the variance is almost uniform therein. Indeed, by Lemma~\ref{lem-covariance}, we see that for a universal constant $C>0$
\begin{equation}\label{eq-var-uniform}
 |\var \eta^{4N}_u - \var \eta^{4N}_v|\leq C\,, \mbox{ for all }u, v \in (2N, 2N) + V_N\,.
 \end{equation}
Let $\psi_N$ be defined as in \eqref{eq-def-psi}. It is clear that for $\kappa$ sufficiently large (independent of $N$), we have $\var \eta^{4N}_{v+ (2N, 2N)} \leq \frac{2\log 2}{\pi}\var \xi^{2^\kappa N}_{\psi_N(v)}$ for all $v\in V_N$. Therefore, we can choose a collection of positive numbers $\{a_v\}_{v\in V_N}$ such that
\begin{equation}\label{eq-variance-equal}
\var (\eta^{4N}_{v+(2N, 2N)} + a_v X) =\tfrac{2\log 2}{\pi}\var \xi^{2^\kappa N}_{\psi_N(v)}\,,
\end{equation}
where $X$ is an independent standard Gaussian variable. Furthermore, due to \eqref{eq-var-uniform} and the fact that the MBRW has precisely uniform variance over all vertices, we have for a universal constant $C>0$
$$|a_u - a_v| \leq C\,, \mbox{ for all } u, v \in V_N\,.$$
This implies that
$$\E ((\eta^{4N}_{v+(2N, 2N)} + a_v X)-(\eta^{4N}_{u+(2N, 2N)} + a_u X))^2 \leq \E ((\eta^{4N}_{v+(2N, 2N)}-\eta^{4N}_{u+(2N, 2N)})^2 + C^2\,, \mbox{ for all } u, v \in V_N\,.$$
Combined with the fact that $\E(\xi^{2^\kappa N}_{
\psi_N(u)} - \xi^{2^\kappa N}_{\psi_N(v)})^2$ grows (linearly) with $\kappa$ and Lemma~\ref{lem-covariance}, it follows that for $\kappa$ sufficiently large (independent of $N$) and for all $ u,v \in V_N$
\begin{equation}\label{eq-compare-distance}
\E ((\eta^{4N}_{v+(2N, 2N)} + a_v X)-(\eta^{4N}_{u+(2N, 2N)} + a_u X))^2 \leq \tfrac{2\log 2}{\pi}\E(\xi^{2^\kappa N}_{
\psi_N(u)} - \xi^{2^\kappa N}_{\psi_N(v)})^2\,.\end{equation}
Combined with \eqref{eq-variance-equal}, an application of Lemma~\ref{lem-slepian} yields that
\begin{equation}\label{eq-compare-upper}
\P\big(\mbox{$\max_{v\in V_N}$} \eta^{4N}_{v+(2N, 2N)} + a_v X \geq \lambda\big) \leq \P\big(\sqrt{\tfrac{2\log 2}{\pi}}\mbox{$\max_{v\in V_N}$} \xi^{2^\kappa N}_{\psi_N(v)} \geq \lambda\big)\,, \mbox{ for all } \lambda \in \mathbb{R}\,.
\end{equation}
It is clear that
\begin{align*}\P\big(\mbox{$\max_{v\in V_N}$} \eta^{4N}_{v+(2N, 2N)} + a_v X \geq \lambda\big) &\geq \P\big(\mbox{$\max_{v\in V_N}$} \eta^{4N}_{v+(2N, 2N)} \geq \lambda, X\geq 0\big)\\
& = \tfrac{1}{2}\P\big(\mbox{$\max_{v\in V_N}$} \eta^{4N}_{v+(2N, 2N)} \geq \lambda\big)\,.\end{align*}
Combined with \eqref{eq-compare-upper}, the desired upper bound follows.

We now turn to the proof of the lower bound, which shares the same spirit with the proof of the upper bound. Recall the definition of $\psi_{2^{-\kappa}N}$ as in \eqref{eq-def-psi}. Using Lemma~\ref{lem-covariance} again, we obtain that
$$|\var \eta^{N}_{\psi_{2^{-\kappa} N} (v)} - \var \eta^N_{\psi_{2^{-\kappa}N}(u)}| \leq C\,, \mbox{ for all } u, v \in V_{2^{-\kappa}N}\,.$$
It is also clear from Lemma~\ref{lem-covariance} that $\var \eta^{N}_{\psi_{2^{-\kappa} N} (v)} \geq \frac{2\log 2}{\pi}\var \xi^{2^{-\kappa} N}_v$, for $\kappa$ sufficiently large (independent of $N$) and for all $v\in V_{2^{-\kappa}N}$. Continue to denote by $X$ an independent standard Gaussian variable. We can then choose a collection of positive numbers $\{a'_v: v\in V_{2^{-\kappa}N}\}$ satisfying $|a'_v - a'_u|\leq C$ such that
$$\var \eta^{N}_{\psi_{2^{-\kappa} N} (v)} =\tfrac{2\log 2}{\pi} \var (\xi^{2^{-\kappa}N}_v + a'_v X)\,, \mbox{ for all } v\in V_{2^{-\kappa}N}\,. $$
Analogous to the derivation of \eqref{eq-compare-distance}, we get that for $\kappa$ sufficiently large (independent of $N$),
$$\E ((\eta^{N}_{\psi_{2^{-\kappa} N}(v)}-\eta^{N}_{\psi_{2^{-\kappa} N}(u)})^2 \geq \tfrac{2\log 2}{\pi}\E((\xi^{2^{-\kappa} N}_{
v} + a'_v X )- (\xi^{2^{-\kappa} N}_{u} + a'_u X))^2\,, \mbox{ for all } u, v \in V_{2^{-\kappa}N}\,.$$
Another application of Lemma~\ref{lem-slepian} yields that for all $\lambda \in \mathbb{R}$
\begin{align*}
\P\big(\mbox{$\max_{v\in V_{2^{-\kappa} N}}$} \eta^N_{\psi_{2^{-\kappa} N}(v)} \geq \lambda \big) &\geq \P\big(\sqrt{\tfrac{2\log 2}{\pi}}\mbox{$\max_{v\in V_{2^{-\kappa} N}}$} (\xi^{2^{-\kappa}N}_{v} + a'_v X)
 \geq \lambda \big) \\&
 \geq \P\big(\sqrt{\tfrac{2\log 2}{\pi}}\mbox{$\max_{v\in V_{2^{-\kappa} N}}$} \xi^{2^{-\kappa}N}_{v} \geq \lambda, X\geq 0 \big)\\
  &= \tfrac{1}{2} \P\big(\sqrt{\tfrac{2\log 2}{\pi}}\mbox{$\max_{v\in V_{2^{-\kappa} N}}$} \xi^{2^{-\kappa}N}_{v} \geq \lambda\big)\,.
\end{align*}
Combined with the fact that $\psi_{2^{-\kappa}N}(v) \in V_N$ for all $v\in V_{2^{-\kappa} N}$, this completes the proof.
\end{proof}

\subsection{Comparison of the
maxima of sums of particles}
We conclude this section with a comparison between
the
Gaussian free field and branching random walk,
which will be used in the proof of Theorem~\ref{thm-exponential-growth}.

We need the following variant of Slepian's inequality.
%
\begin{lemma}\label{lem-sudakov-fernique-extension}
Let $\mathbf{X} = (X_i: i\in [n])$ and $\mathbf{Y} = (Y_1, \ldots, Y_n)$ be two mean-zero Gaussian processes such that $\E X_i^2 = \E Y_i^2$ and $\E X_i X_j \leq \E Y_i Y_j$ for all $i, j\in [n]$.
Fix $1\leq m \leq n$, and define
$S_m(\mathbf{x}) = \max\{\sum_{i\in A} x_i :A\subseteq [n], |A| = m\}$
for $\mathbf{x}\in \mathbb{R}^n$.
Then $\E S_m(\mathbf{X}) \geq \E S_m(\mathbf{Y})$.
\end{lemma}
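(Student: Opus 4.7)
The plan is to adapt the standard log-sum-exp proof of Slepian's inequality to the top-$m$ functional, with the only nontrivial new ingredient being a negative-correlation estimate for the Gibbs measure on $m$-subsets. Recall Kahane's Gaussian interpolation: if $f\in C^2(\R^n)$ satisfies $\partial_i\partial_j f\le 0$ for all $i\neq j$, and $\mathbf{X},\mathbf{Y}$ are centered Gaussians with equal variances and $\E X_iX_j\le \E Y_iY_j$, then $\E f(\mathbf{X})\ge \E f(\mathbf{Y})$; this is obtained by differentiating $t\mapsto \E f(\sqrt{t}\,\mathbf{X}+\sqrt{1-t}\,\mathbf{Y}')$ (with $\mathbf{Y}'$ an independent copy of $\mathbf{Y}$) and applying Gaussian integration by parts. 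My aim is to construct an $f$ approximating $S_m$ that satisfies the cross-derivative condition.

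For the smoothing, take
\[
f_\beta(\mathbf{x}) \;=\; \frac{1}{\beta}\log \sum_{A\subseteq[n],\ |A|=m}\exp\Big(\beta\sum_{i\in A} x_i\Big),
\]
which obeys $S_m(\mathbf{x})\le f_\beta(\mathbf{x})\le S_m(\mathbf{x})+\beta^{-1}\log\binom{n}{m}$, so that $\E f_\beta(\mathbf{X})\to \E S_m(\mathbf{X})$ and $\E f_\beta(\mathbf{Y})\to \E S_m(\mathbf{Y})$ as $\beta\to\infty$. Write $\mu_\beta^{\mathbf{x}}$ for the Gibbs measure on $m$-subsets of $[n]$ with $\mu_\beta^{\mathbf{x}}(A)\propto \exp(\beta\sum_{i\in A} x_i)$. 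A direct computation yields $\partial_i f_\beta(\mathbf{x}) = \mu_\beta^{\mathbf{x}}(i\in A)$ and, for $i\neq j$, $\partial_i\partial_j f_\beta(\mathbf{x}) = \beta\,\Cov_{\mu_\beta^{\mathbf{x}}}(\1\{i\in A\},\1\{j\in A\})$, so the whole argument reduces to showing that this pairwise covariance is nonpositive.

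This negative-correlation estimate is the only genuinely non-routine step. Set $w_k = e^{\beta x_k}>0$ and let $e_k^{(S)}$ denote the $k$-th elementary symmetric polynomial in $\{w_\ell:\ell\notin S\}$ (with $e_k=e_k^{(\emptyset)}$). Then $\mu_\beta^{\mathbf{x}}(i\in A) = w_i\,e_{m-1}^{(\{i\})}/e_m$ and $\mu_\beta^{\mathbf{x}}(\{i,j\}\subseteq A) = w_iw_j\,e_{m-2}^{(\{i,j\})}/e_m$, so the desired inequality $\mu_\beta^{\mathbf{x}}(\{i,j\}\subseteq A)\le \mu_\beta^{\mathbf{x}}(i\in A)\,\mu_\beta^{\mathbf{x}}(j\in A)$ is equivalent to $e_m\,e_{m-2}^{(\{i,j\})}\le e_{m-1}^{(\{i\})}\,e_{m-1}^{(\{j\})}$. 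Using the recursions $e_{m-1}^{(\{i\})} = e_{m-1}^{(\{i,j\})}+w_j\,e_{m-2}^{(\{i,j\})}$, its $i\leftrightarrow j$ swap, and $e_m = e_m^{(\{i,j\})}+(w_i+w_j)\,e_{m-1}^{(\{i,j\})}+w_iw_j\,e_{m-2}^{(\{i,j\})}$, expansion and cancellation collapse the inequality to $(e_{m-1}^{(\{i,j\})})^2 \ge e_m^{(\{i,j\})}\,e_{m-2}^{(\{i,j\})}$, the classical Newton log-concavity inequality for elementary symmetric polynomials in nonnegative variables (equivalently, this reflects the well-known negative association of the conditional Bernoulli measure obtained by conditioning independent Bernoullis on the value of their sum).

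With $\partial_i\partial_j f_\beta\le 0$ ($i\neq j$) established, Kahane's interpolation immediately gives $\E f_\beta(\mathbf{X})\ge \E f_\beta(\mathbf{Y})$; sending $\beta\to\infty$ finishes the proof. The main obstacle is the negative-correlation step, which is where the hypothesis $|A|=m$ enters in an essential way (absent this constraint, the Gibbs measure is a product and the covariance vanishes); once Newton's inequality is applied, the remainder is a routine adaptation of the classical $m=1$ proof of Slepian's lemma.
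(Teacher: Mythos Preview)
Your proof is correct and follows the same architecture as the paper's: the log-sum-exp smoothing $F_\beta$, verification that $\partial_i\partial_j F_\beta\le 0$ for $i\neq j$, an appeal to the Gaussian interpolation principle (the paper cites \cite[Theorem~3.11]{LT91}), and passage to the limit $\beta\to\infty$. The only difference lies in how the cross-derivative sign is checked: both reductions arrive at the same inequality $e_m\,e_{m-2}^{(\{i,j\})}\le e_{m-1}^{(\{i\})}\,e_{m-1}^{(\{j\})}$, but the paper proves it by a direct term-by-term multiplicity count, whereas you cancel the common terms and invoke Newton's log-concavity $(e_{m-1}^{(\{i,j\})})^2\ge e_m^{(\{i,j\})}\,e_{m-2}^{(\{i,j\})}$---your route is slightly more conceptual and highlights the link to negative association of conditional Bernoulli measures, while the paper's argument is entirely self-contained.
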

\begin{proof}
%
%
  For $\beta>0$, define $F_\beta: \mathbb{R}^n \mapsto \mathbb{R}$ by \
$$F_\beta(\mathbf{x}) = \beta^{-1} \log \sum_{A\in \Omega_m}
\mathrm{e}^{\beta \mathbf{x}_A}\,,$$
where we denote by $\Omega_m = \{A \subseteq [n]: |A| = m\}$
and $\mathbf{x}_A = \sum_{i\in A} x_i$.
We prove below that
\begin{equation}
  \label{eq-secondder}
  \partial^2 F_\beta/\partial x_i\partial x_j\leq 0\,,\quad
  i\neq j\,.
\end{equation}
Then, by \cite[Theorem 3.11]{LT91}, one has that
$$E F_\beta(\mathbf{X})\geq EF_\beta(\mathbf{Y})\,.$$
Taking $\beta\to\infty$ yields the lemma.

It remains to prove \eqref{eq-secondder}.
For
$k\in [n]$ and $I\subseteq [n]$, we set
$\Omega_{k}^{\setminus I} = \{B\subseteq [n] \setminus I: |B| = k\}$. Then,
for $i\neq j$,
$$\frac{\partial^2 F_\beta}{\partial x_i \partial x_j} =\frac{\beta \mathrm{e}^{\beta (x_i + x_j)} \sum_{B\in \Omega_{m-2}^{\setminus \{i, j\}}}\mathrm{e}^{\beta \mathbf{x}_B}}{\sum_{A \in \Omega_m}\mathrm{e}^{\beta\mathbf{x}_A}}-\frac{\beta \mathrm{e}^{\beta (x_i + x_j)} \sum_{B\in \Omega_{m-1}^{\setminus i}}\mathrm{e}^{\beta \mathbf{x}_B}\sum_{B'\in \Omega_{m-1}^{\setminus j}}\mathrm{e}^{\beta \mathbf{x}_{B'}}}{(\sum_{A \in \Omega_m}\mathrm{e}^{\beta \mathbf{x}_A})^2}\,.$$
The inequality \eqref{eq-secondder} follows from
the following combinatorial
claim.
\end{proof}
\begin{claim}
For all $i, j, m\in [n]$ and $\beta>0$, we have
$$\sum_{A\in \Omega_m} \mathrm{e}^{\beta \mathbf{x}_A}\sum_{B\in \Omega^{\setminus \{i, j\}}_{m-2}} \mathrm{e}^{\beta \mathbf{x}_B} \leq \sum_{B\in \Omega_{m-1}^{\setminus i}}\mathrm{e}^{\beta \mathbf{x}_B}\sum_{B'\in \Omega_{m-1}^{\setminus j}}\mathrm{e}^{\beta \mathbf{x}_{B'}}\,.$$
\end{claim}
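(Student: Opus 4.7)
The plan is to reduce the claim to Newton's inequality for elementary symmetric polynomials (ESPs). Set $y_k = \mathrm{e}^{\beta x_k}>0$ for each $k\in[n]$, and for $S\subseteq[n]$ write $e_r^S$ for the $r$-th elementary symmetric polynomial in the variables $\{y_\ell : \ell\in[n]\setminus S\}$, with the usual conventions $e_0^S=1$ and $e_r^S=0$ for $r<0$ or $r>n-|S|$. Then each of the four sums appearing in the claim is an ESP in the $y_k$, and the inequality to prove reads
$$e_m^{\emptyset}\cdot e_{m-2}^{\{i,j\}} \;\leq\; e_{m-1}^{\{i\}}\cdot e_{m-1}^{\{j\}}.$$

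Assuming $i\neq j$ (the case $i=j$ makes the inequality a trivial equality after reinterpreting $\Omega^{\setminus\{i,j\}}$), I would next condition on the membership of $i$ and $j$ in the indexing subset. This yields
$$e_m^{\emptyset} = e_m^{\{i,j\}} + (y_i+y_j)\,e_{m-1}^{\{i,j\}} + y_i y_j\,e_{m-2}^{\{i,j\}},$$
$$e_{m-1}^{\{i\}} = e_{m-1}^{\{i,j\}} + y_j\,e_{m-2}^{\{i,j\}},\qquad e_{m-1}^{\{j\}} = e_{m-1}^{\{i,j\}} + y_i\,e_{m-2}^{\{i,j\}}.$$
Substituting these identities into both sides of the target inequality and expanding, three pairs of terms (the $e_{m-1}^{\{i,j\}}e_{m-2}^{\{i,j\}}$-terms weighted by $y_i$ and $y_j$, and the $y_i y_j (e_{m-2}^{\{i,j\}})^2$ terms) appear identically on both sides and cancel. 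What remains is the single inequality
$$\bigl(e_{m-1}^{\{i,j\}}\bigr)^2 \;\geq\; e_m^{\{i,j\}}\cdot e_{m-2}^{\{i,j\}}.$$

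This is precisely the log-concavity of ESPs (Newton's inequality) applied to the $n-2$ positive reals $\{y_\ell : \ell\neq i,j\}$, a classical fact. Boundary cases where some of the collections $\Omega^{\setminus\cdots}_{\cdot}$ are empty (e.g.\ $m\leq 1$ or $m\geq n$) are handled by inspection since the corresponding side of the inequality vanishes or becomes trivial. The ``hard part'' is essentially nonexistent: the content of the claim is a repackaging of Newton's inequality, and the only nontrivial step is spotting the ESP reformulation; after that, the proof is a routine expansion followed by invoking a classical inequality.
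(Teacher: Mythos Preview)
Your proof is correct for $i\neq j$ and takes a genuinely different route from the paper. The paper proceeds monomial-by-monomial: for each exponent pattern $(a_\ell)$ it computes the multiplicity of $\mathrm{e}^{\beta\sum_\ell a_\ell x_\ell}$ on each side, reducing the comparison to elementary binomial identities such as $\binom{k}{k/2+1}\le\binom{k}{k/2}$. You instead recognize all four sums as elementary symmetric polynomials in the positive reals $y_\ell=\mathrm{e}^{\beta x_\ell}$, expand by membership of $i$ and $j$, and collapse the entire inequality to the log-concavity $(e_{m-1}^{\{i,j\}})^2\ge e_m^{\{i,j\}}e_{m-2}^{\{i,j\}}$, which is Newton's inequality. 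Your argument is shorter and more conceptual, at the price of invoking an outside (though classical) result; the paper's argument is self-contained and in fact establishes the stronger monomial-wise comparison.

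One small correction: your parenthetical that the case $i=j$ gives ``a trivial equality'' is not right. For $i=j$ the inequality reads $e_m^{\emptyset}\,e_{m-2}^{\{i\}}\le (e_{m-1}^{\{i\}})^2$, and already for $n=2$, $m=2$, $i=1$ this becomes $y_1y_2\le y_2^2$, which can fail. This is harmless here because the claim is only invoked for $i\neq j$ (it comes from the mixed second partials $\partial^2 F_\beta/\partial x_i\partial x_j$ with $i\neq j$), but you should drop that remark.
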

\begin{proof}
Fix a sequence $(a_1, \ldots, a_n)$ such that $a_\ell\in \{0, 1, 2\}$ for all $\ell\not\in\{i, j\}$, $a_i, a_j\in \{0, 1\}$ and $\sum_{\ell} a_\ell = 2m-2$. We count the multiplicity of the term $\mathrm{e}^{\sum_\ell \beta a_\ell x_\ell}$ in the left (denoted by $L$) and right hand sides (denoted by $R$), respectively. Let $k = |\{\ell \in [n]\setminus \{i, j\}: a_\ell = 1\}|$. It is straightforward to verify that
$$L = \begin{cases}
\tbinom{k}{k/2+1}\,, \mbox{ if } a_i + a_j = 0\,, \\
\tbinom{k-1}{(k-1)/2}\,, \mbox{ if } a_i + a_j = 1\,,\\
\tbinom{k-2}{(k-2)/2}\,, \mbox{ if } a_i + a_j = 2\,;
\end{cases} \mbox{ and }\quad R = \begin{cases}\tbinom{k}{k/2}\,, \mbox{ if } a_i + a_j = 0\,, \\
\tbinom{k-1}{(k-1)/2}\,, \mbox{ if } a_i + a_j = 1\,,\\
\tbinom{k-2}{(k-2)/2}\,, \mbox{ if } a_i + a_j = 2\,.
\end{cases}$$
Therefore, we always have $L\leq R$, completing the proof of the claim.
\end{proof}

We now demonstrate a comparison for the maxima of
sums of values between the GFF and the BRW.

\begin{lemma}\label{lem-compare-3}
For $N = 2^n$ with $n\in \mathbb{N}$, let $\{\eta_v: v\in V_N\}$ be the Gaussian free field and $\{\vartheta_v: v\in V_N\}$ the branching random walk as defined in \eqref{eq-def-BRW}. For $\ell\in \mathbb{N}$, define
$$\mathcal{S}_{\ell, N} = \max\{\mbox{$\sum_{v\in A}$} \eta_v: |A| = \ell, A\subset V_N\}\,, \mbox{ and }\mathcal{R}_{\ell, N} = \sqrt{\tfrac{2\log 2}{\pi}}\max\{\mbox{$\sum_{v\in A}$} \vartheta_v: |A| = \ell, A\subset V_N\}\,.$$
Then, there exists absolute constant $\kappa\in \mathbb{N}$ such that $\E \mathcal{S}_{\ell, N} \leq \E \mathcal{R}_{\ell, N 2^\kappa}$.
\end{lemma}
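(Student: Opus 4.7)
The plan is to apply Lemma~\ref{lem-sudakov-fernique-extension} (the extension of Slepian's comparison to sums of the top $\ell$ coordinates), taking $\mathbf{X}$ to be a scaled BRW indexed by $V_N$ through an embedding into $V_{N2^\kappa}$, and $\mathbf{Y}$ to be the GFF augmented by both independent and common Gaussian noise so that the required variance equality, and the off-diagonal comparison $\cov(X_u,X_v)\leq \cov(Y_u,Y_v)$, both hold.

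First, exactly as in Lemma~\ref{lem-monotone}(i), I would use the Markov field decomposition of $\eta^{4N}$ to write $\eta^{4N}_{v+(2N,2N)} = (\eta^N_v)' + \phi_v$ with $\phi$ a mean-zero harmonic extension independent of $(\eta^N_\cdot)'$; evaluating at the $A^*$ maximizing $\sum_{v\in A}(\eta^N_v)'$ and using $\E \phi_v = 0$ yields
\[
\E\mathcal{S}_{\ell,N} \leq \E\tilde{\mathcal{S}}_{\ell,N}, \qquad \tilde{\mathcal{S}}_{\ell,N} = \max\Big\{\sum_{v\in A}\eta^{4N}_{v+(2N,2N)} : A\subset V_N,\ |A|=\ell\Big\}.
\]
The advantage is that on the central sub-box, Lemma~\ref{lem-covariance} controls both variances and covariances of the resulting GFF to within an absolute constant $C$.

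Next, fix a sufficiently large absolute $\kappa$, and embed $V_N \hookrightarrow V_{N 2^\kappa}$ by $\psi_N(v)=2^\kappa v$. A direct inspection of the dyadic partition shows that for distinct $u,v\in V_N$, the smallest level $k$ with $\mathcal{BD}_k(\psi_N(u))=\mathcal{BD}_k(\psi_N(v))$ is $\kappa + j^*(u,v)$, where $j^*(u,v)$ is the analogous smallest level in $V_N$; hence $\cov(\vartheta_{\psi_N(u)},\vartheta_{\psi_N(v)}) = n+1-j^*(u,v)$. Let $\{Z_v\}_{v\in V_N}$ be i.i.d.\ standard Gaussians and $W$ an independent standard Gaussian, and set
\[
X_v=\sqrt{\tfrac{2\log 2}{\pi}}\,\vartheta_{\psi_N(v)}, \qquad Y_v = \eta^{4N}_{v+(2N,2N)} + a_v Z_v + bW,
\]
with $b>0$ an absolute constant and $a_v\geq 0$ chosen so that $\var Y_v = \var X_v = \tfrac{2\log 2}{\pi}(n+\kappa+1)$; by Lemma~\ref{lem-covariance}, $\var\eta^{4N}_{v+(2N,2N)}\leq \tfrac{2\log 2}{\pi}n + C$, so the $a_v$ exist once $\kappa$ is large. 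Using $j^*(u,v)\geq \log_2\|u-v\|_\infty\geq \log_2\|u-v\|-\tfrac12$ and the lower bound $\cov(\eta^{4N}_{u+(2N,2N)},\eta^{4N}_{v+(2N,2N)})\geq \tfrac{2\log 2}{\pi}(n-\log_2\|u-v\|)-C$, I would verify that
\[
\cov(Y_u,Y_v)-\cov(X_u,X_v) \geq b^2 - \tfrac{3\log 2}{\pi} - C \geq 0
\]
for $b$ a sufficiently large absolute constant. Applying Lemma~\ref{lem-sudakov-fernique-extension} then yields $\E S_\ell(\mathbf{X})\geq \E S_\ell(\mathbf{Y})$. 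Since $\sum_{v\in A} Y_v = \ell bW + \sum_{v\in A}(\eta^{4N}_{v+(2N,2N)}+a_vZ_v)$, evaluating at the $\eta$-maximizer $A^*$ (independent of $Z,W$) gives $\E S_\ell(\mathbf{Y})\geq \E\tilde{\mathcal{S}}_{\ell,N}$, while $\E S_\ell(\mathbf{X})\leq \E\mathcal{R}_{\ell,N2^\kappa}$ is immediate. Chaining the inequalities concludes the proof.

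The main obstacle is arranging the covariance inequality in the right direction. Even after scaling, the BRW cross-covariance $\tfrac{2\log 2}{\pi}(n+1-j^*(u,v))$ can exceed the GFF covariance $\tfrac{2\log 2}{\pi}(n-\log_2\|u-v\|)$ by a bounded amount, so per-vertex independent noise alone does not suffice (it raises variances but leaves off-diagonals untouched). The common Gaussian $bW$ is the key device: it shifts every off-diagonal entry of $\cov(\mathbf{Y})$ up by exactly $b^2$, absorbing the $O(1)$ discrepancy from Lemma~\ref{lem-covariance}, while contributing only a single mean-zero term $\ell bW$ to $\sum_{v\in A}Y_v$ and therefore not interfering with the lower bound on $\E S_\ell(\mathbf{Y})$.
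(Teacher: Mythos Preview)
Your argument is correct and follows essentially the same strategy as the paper: pass to the central sub-box of $V_{4N}$ via the Markov decomposition, add Gaussian noise to equalize variances, verify the off-diagonal covariance inequality, and invoke Lemma~\ref{lem-sudakov-fernique-extension}. The only difference is in how the noise is distributed: the paper adds i.i.d.\ noise $\kappa X_v$ to the \emph{BRW} side (which is then absorbed by $\mathcal{R}_{\ell,N2^\kappa}$) and a single common Gaussian with varying coefficients $a_vX$ to the GFF side, so that the off-diagonal boost $a_ua_v$ comes from the same device that equalizes variances; you instead realize the variance increase on the BRW side via the dyadic embedding $v\mapsto 2^\kappa v$ (which leaves cross-covariances unchanged), and split the GFF-side noise into an i.i.d.\ part $a_vZ_v$ (for variance matching) and a fixed common part $bW$ (for the off-diagonal boost). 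Both decompositions do the same job; yours makes the role of the $b^2$ shift slightly more transparent, while the paper's is marginally more economical.
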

\begin{proof}
Consider $\vartheta^*_v = \vartheta_v + \kappa X_v$ where $X_v$ are i.i.d.\ standard Gaussian variables, and define $\mathcal{R}^*_{\ell, N} = \sqrt{2\log 2/\pi}\max\{\sum_{v\in A} \vartheta^*_v: |A| = \ell, A\subset V_N\}$. Clearly, $\E \mathcal{R}^*_{\ell, N} \leq \E R_{\ell, N 2^\kappa}$. Let $X$ be another independent standard Gaussian variable and choose a non-negative sequence $\{a_v: v\in (2N, 2N) + V_N\}$ such that
\begin{equation}\label{eq-var-identical-3}
\var(\eta_v^{4N} + a_v X) = \var \vartheta^*_v\,, \mbox{ for all } v\in (2N, 2N)+ V_N\,.\end{equation}
By Lemma~\ref{lem-covariance}, we see that $|a_u - a_v|\leq C$ for an absolute constant $C>0$. Further define
$$\mathcal{S}^*_{\ell, N} = \max\{\mbox{$\sum_{v\in A+ (2N, 2N)}$} \eta_v^{4N} + a_v X: |A| = \ell, A\subset V_N\}\,.$$
Using similar arguments as in the proof of Lemma~\ref{lem-monotone}, we deduce that $\E \mathcal{S}_{\ell, N} \leq \E \mathcal{S}^*_{\ell, N}$. Therefore, it remains to prove $\E \mathcal{S}^*_{\ell, N} \leq \E \mathcal{R}^*_{N, \ell}$. To this end, note that we can select $\kappa = 4C$ such that for all $u, v\in V_N$
$$\E (\vartheta^*_v \vartheta^*_u) \leq \E((\eta_{v+(2N, 2N)}^{4N} + a_{v+(2N, 2N)} X)(\eta_{v+(2N, 2N)}^{4N} + a_{v+(2N, 2N)} X))\,.$$
Combined with \eqref{eq-var-identical-3} and Lemma~\ref{lem-sudakov-fernique-extension}, it completes the proof.
\end{proof}

\section{Maxima of the modified branching random walk}\label{sec:MBRW/BRW}

This section is devoted to the study of the maxima of MBRW, from which we will deduce properties for the maxima of GFF.

\subsection{The maximal sum over pairs}\label{sec:sumpairMBRW}
The following lemma is the key to controlling the maximum over pairs.
Set $\tilde m_N=\sqrt{\pi/2\log 2} \cdot m_N$.
\begin{lemma}\label{lem-locationmbrw}
There exist constants $c_1,c_2>0$ so that
$$2\tilde m_N-c_2 \log\log r\leq \E\xi^\diamond_{N,r}\leq 2\tilde m_N-c_1\log\log r\,.$$
\end{lemma}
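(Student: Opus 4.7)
The plan is to exploit the independent scale structure of the MBRW, writing $\eta^{(j)}_v := \sum_{B \in \mathcal{B}_j(v)} b^N_{j,B}$ so that $\xi^N_v = \sum_{j=0}^n \eta^{(j)}_v$ is a sum over independent layers. For any integer $k\in[0,n]$, I use the splitting $\xi^N_v = T^{(k)}_v + B^{(k)}_v$ with $T^{(k)}_v := \sum_{j\geq k}\eta^{(j)}_v$ (``top'') and $B^{(k)}_v := \sum_{j<k}\eta^{(j)}_v$ (``bottom''). The structural facts I need are: $T^{(k)}\perp B^{(k)}$; for $\|u-v\|\geq 2^k$ we have $B^{(k)}_u\perp B^{(k)}_v$; on each subbox $Q\in\mathcal{BD}_k$, the restriction of $B^{(k)}$ is an MBRW at scale $2^k$, and these are independent across subboxes; the field $T^{(k)}$ is approximately constant on each subbox (a short Dudley-integral calculation using $\mathrm{Var}(T^{(k)}_u-T^{(k)}_v) = O(1)$ from Lemma~\ref{lem-covariance} gives intra-subbox fluctuation $O(1)$), with coarse-grained values forming an MBRW at scale $N/2^k$. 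The baseline input I repeatedly invoke is $\E\max_v\xi_v^s = \tilde m_s + O(1)$ for MBRW at scale $s$, which follows from Lemma~\ref{lem-compare-2} combined with the Bramson--Zeitouni estimate $\E M_N = m_N + O(1)$.

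For the upper bound, group the pairs by the dyadic scale of their distance: with $L := \lfloor\log_2 r\rfloor$, define $\mathcal{P}_k := \{(u,v) : 2^k\leq \|u-v\|<2^{k+1}\}$ for $k\in\{L,\ldots,n-L\}$. For $(u,v)\in\mathcal{P}_k$, decompose at scale $k$: $\xi^N_u + \xi^N_v = (T^{(k)}_u + T^{(k)}_v) + (B^{(k)}_u + B^{(k)}_v)$. The top contribution is controlled by applying the baseline input to the coarse MBRW at scale $N/2^k$: up to intra-subbox fluctuations absorbed into $O(1)$, its expected maximum over pairs of distinct coarse cells is $\le 2\tilde m_{N/2^k}+O(1)$. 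The bottom contribution at any fixed pair of distinct subboxes is a sum of two independent MBRW-at-scale-$2^k$ maxima; since $T^{(k)}\perp B^{(k)}$, the bottom values at the two subboxes selected by the top-maximization are statistically typical, of expected value $\tilde m_{2^k}+O(1)$ each. Combining,
\[
\E \max_{(u,v)\in\mathcal{P}_k}(\xi^N_u+\xi^N_v) \le 2\tilde m_{N/2^k}+2\tilde m_{2^k}+O(1).
\]
The function $k\mapsto\tilde m_{N/2^k}+\tilde m_{2^k}$ is maximized over $k\in[L,n-L]$ at its endpoints, both giving $\tilde m_{N/r}+\tilde m_r$. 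A union bound over the $O(n)$ scales, combined with Gaussian tail integration, keeps the error at $O(1)$, yielding $\E\xi^\diamond_{N,r}\le 2\tilde m_{N/r}+2\tilde m_r + O(1)$.

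For the lower bound, I construct an explicit good pair at distance $\approx r$. Applying the baseline input to $T^{(L)}$ on the coarse grid (an MBRW at scale $N/r$), one finds, with positive probability, two distinct subboxes $Q_1,Q_2$ at mutual distance in $[r,N/r]$ with $T^{(L)}_{Q_j}\ge \tilde m_{N/r}-O(1)$, $j=1,2$. The existence of two such near-maxima follows from a planted two-point second-moment argument applied to the MBRW at scale $N/r$ (i.e., showing $\P(T^{(L)}_{Q_1},T^{(L)}_{Q_2}\ge \tilde m_{N/r}-O(1))$ is bounded below). Independently of $T^{(L)}$, the restriction of $B^{(L)}$ to each $Q_j$ is an MBRW at scale $r$, and with positive probability admits a value $\ge \tilde m_r-O(1)$; combining over the independence of the $B^{(L)}$ restrictions on $Q_1$ and $Q_2$, we obtain a pair $(u,v)$ with $\xi^N_u+\xi^N_v \ge 2(\tilde m_{N/r}+\tilde m_r)-O(1)$ with positive probability, which transfers to expectation via Borell--TIS concentration of $\xi^\diamond_{N,r}$.

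The arithmetic uses the Bramson--Zeitouni expansion $\tilde m_s = 2\sqrt{\log 2}\log_2 s - \tfrac{3}{4\sqrt{\log 2}}\log\log_2 s + O(1)$: for $L\le n/2$,
\[
\tilde m_N - (\tilde m_{N/r}+\tilde m_r) = \tfrac{3}{4\sqrt{\log 2}}\bigl[\log(n-L)+\log L-\log n\bigr]+O(1) = \tfrac{3}{4\sqrt{\log 2}}\log\log r + O(1),
\]
with the symmetric case $L>n/2$ treated identically. This gives $\E\xi^\diamond_{N,r} = 2\tilde m_N - \Theta(\log\log r)$, yielding matching $c_1,c_2>0$. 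The main obstacle is the upper bound's bottom-part step: naively, $\max_{v\in V_N} B^{(k)}_v$ can exceed $\tilde m_{2^k}+O(1)$ by a Gumbel amount of order $\sqrt{\log 2}\,(n-k)$ (coming from optimizing over $(N/2^k)^2$ independent copies), which if added to $\max T^{(k)}$ would wildly overshoot $2\tilde m_N$. The remedy --- that only the $B^{(k)}$-values at the \emph{specific} two subboxes chosen by the top-maximization matter, and by independence these are statistically typical --- must be made rigorous by careful conditioning and Borell--TIS concentration combined with the exponential tail of the MBRW max.
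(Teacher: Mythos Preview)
Your lower bound construction is close in spirit to the paper's Proposition~\ref{prop-lb1} (the paper runs the argument in the opposite order --- first many sub-boxes with good \emph{bottom} field, then a pair among those where the \emph{top} is good --- but the idea is the same and your version is plausible). The upper bound, however, has a genuine gap, and your own closing paragraph already locates it without resolving it.

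The problem is the sentence ``since $T^{(k)}\perp B^{(k)}$, the bottom values at the two subboxes selected by the top-maximization are statistically typical.'' This is true but irrelevant: the pair achieving $\max_{(u,v)\in\mathcal P_k}(\xi_u^N+\xi_v^N)$ is \emph{not} the pair of sub-boxes maximizing $T^{(k)}$; it is the argmax of the sum. Writing $M_Q=\max_{v\in Q}B^{(k)}_v$, what you actually need is
\[
\E\max_{Q_1\neq Q_2}\bigl[(T^{(k)}_{Q_1}+M_{Q_1})+(T^{(k)}_{Q_2}+M_{Q_2})\bigr]\leq 2\tilde m_{N/2^k}+2\tilde m_{2^k}+O(1),
\]
and this is \emph{not} a consequence of independence plus Borell--TIS. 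The field $\{T^{(k)}_Q+M_Q\}_Q$ is a coarse MBRW with an added independent leaf variable whose right tail decays at the \emph{same} exponential rate $c^*=2\sqrt{\log 2}$ as the MBRW itself; showing that this extra leaf only shifts the pair-maximum by $O(1)$ is exactly the delicate trade-off the lemma is about, and it requires a level-set count (how many coarse cells $Q$ have $T^{(k)}_Q$ at height $\tilde m_{N/2^k}-x$, and how often $M_Q$ overshoots by $x$). Your union bound over scales compounds the issue: to combine $O(n)$ scales into an $O(1)$ error you need tail bounds at each scale, not just expectation bounds, and those tail bounds again require the missing level-set estimate.

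The paper sidesteps working directly with the MBRW here. It first transfers the upper bound to a BRW via the Slepian-type comparison of Lemma~\ref{lem-sudakov-fernique-extension}, and then, for the BRW, it proves the needed level-set control explicitly: Proposition~\ref{prop-BRWpair} bounds $\Xi_n(x)$, the number of particles at distance~$x$ behind the leader, and Corollary~\ref{cor-BRWpair} turns this into the pair-tail estimate \eqref{eq-of4}--\eqref{eq-of6} by decomposing over the ancestor height at the split time. That decomposition --- summing over possible ancestor positions and invoking the upper \emph{and} lower tail of Lemma~\ref{lem-tail-BRW} --- is the missing engine in your argument.
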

We consider first a branching random walk $\{X_i^n: i=1, \ldots, 4^n\}$, with four descendants per particle and standard normal increments. Note that $\{\vartheta_v: v\in V_N\}$ as defined in \eqref{eq-def-BRW} is a BRW with four descendants per particle and $n$ generations. We use  different notation in
this subsection that allows us to
ignore the  geometrical embedding of the BRW into the two-dimensional lattice.
  Let $T_n$ be the maximum of the BRW after
$n$ generations. Let
$c^*=2\sqrt{\log 2}$, $\bar c=(3/2)/c^*$ and
$t_n=c^*n-\bar c \log n$. We need the following estimates on the right tail of the maximum of a BRW.  For the lower bound, we refer e.g. to
\cite{BR09} and to \cite[(2.5.11),(2.5.13)]{ofernotes}.
One can obtain the upper bound by adapting,
with some effort, Bramson's argument in
\cite{Bramson83}; this is done in detail in \cite[Lemma 3.7, 3.8]{BDZ13}. Alternatively, one can refer to \cite[Prop.  4.1]{aidekon11} for most of the content of Lemma \ref{lem-tail-BRW}.
\begin{lemma}\label{lem-tail-BRW}
 The expectation $ET_n$ satisfies
  \begin{equation}
    \label{of-1}
    \E T_n=c^* n-\bar c\log n+O(1)\,.
  \end{equation}
  Further, there exist constants $c,C>0$ so that, for $y\in [0,\sqrt{n}]$,
  \begin{equation}
  \label{eq-of2}
  c\mathrm{e}^{-c^*y} \leq \P(T_n\geq t_n+y)\leq C(1+y) \mathrm{e}^{-c^*y}\,, 
\end{equation}
with the upper bound holding for any $y\geq 0$.
\end{lemma}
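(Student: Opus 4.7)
The plan is to identify the BRW parameters in our setting and then invoke Bramson-type tail technology. For a $4$-ary tree with standard Gaussian increments the log moment generating function is $\psi(\theta) = \log 4 + \theta^2/2$; minimizing $\psi(\theta)/\theta$ over $\theta > 0$ gives $c^* = 2\sqrt{\log 2}$, and Bramson's logarithmic correction for Gaussian-step BRW is $\bar c = (3/2)/c^*$. So $t_n = c^* n - \bar c \log n$ is the correct centering, and all the work lies in sharp one-sided estimates around $t_n$.

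For the lower bound in \eqref{eq-of2}, and hence (with $y = O(1)$) the lower bound in \eqref{of-1}, I would run a truncated second-moment argument. Define
\[
Z_n(y) = \#\bigl\{i \in [4^n] : X^n_i \in [t_n+y,\, t_n+y+1] \text{ and } S^i_k \leq \tfrac{k}{n}(t_n+y) + C\log(k\wedge(n-k)+1) \text{ for all } k \leq n\bigr\},
\]
where $(S^i_k)_{k\leq n}$ is the ancestral trajectory of leaf $i$. A many-to-one computation combining the Gaussian density of $S^i_n$ with a ballot estimate for a Gaussian bridge surviving the tilted barrier yields $\E Z_n(y) \asymp e^{-c^* y}$ for $y \in [0,\sqrt n]$. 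Decomposing $\E Z_n(y)^2$ over the common-ancestor level and using the tree structure (pairs of leaves splitting at height $k$ contribute a factor $4^{-k}$ whose geometric decay dominates the per-particle Gaussian penalty) yields $\E Z_n(y)^2 \leq C (\E Z_n(y))^2$, so Paley--Zygmund gives $\P(T_n \geq t_n+y) \geq \P(Z_n(y)\geq 1) \geq c e^{-c^* y}$.

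The upper bound in \eqref{eq-of2}, and hence the matching upper bound for $\E T_n$, is the main obstacle. A naive union bound over the $4^n$ leaves gives only $\P(T_n \geq t_n + y) \leq C n e^{-c^* y}$, off by a factor of $n$. Bramson's remedy is to first show that with probability $1-o(1)$ the ancestral trajectory of any particle reaching height $\geq t_n+y$ respects a tilted barrier of the same type as used above --- otherwise a subtree rooted at a barrier-violating ancestor would, with high probability, contain still higher descendants, contradicting attainment of $T_n$. Conditioned on the barrier event, Markov yields $\P(T_n \geq t_n+y) \leq \E Z_n^+(y) \leq C(1+y) e^{-c^* y}$, where the extra $(1+y)$ factor arises from the ballot estimate for a Gaussian bridge with endpoint integrated over $[t_n+y,\infty)$ against a barrier ending at $t_n+y+O(1)$. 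This barrier-preservation step is the heart of Bramson's \cite{Bramson83} tightness argument; it is carried out for the BRW in \cite[Lemmas~3.7--3.8]{BDZ13} and \cite[Prop.~4.1]{aidekon11}, and I would cite rather than reproduce it.
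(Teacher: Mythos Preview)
Your proposal is correct and aligns with the paper's treatment: the paper does not prove this lemma but simply cites \cite{BR09} and \cite[(2.5.11),(2.5.13)]{ofernotes} for the lower bound and \cite[Lemmas~3.7--3.8]{BDZ13} (adapting Bramson \cite{Bramson83}) or \cite[Prop.~4.1]{aidekon11} for the upper bound. Your sketch of the truncated second-moment/ballot argument for the lower bound and the Bramson barrier argument for the upper bound is exactly the content of those references, so you have in fact supplied more detail than the paper itself.
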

We remark that \cite[Prop. 4.1]{aidekon11}
implies (in a much more general setting than that considered here) a lower bound in \eqref{eq-of2} that matches the upper bound
(up to a multiplicative constant);  we will not directly use this.
 Further,
\eqref{eq-of2}
implies that with $T_n'$ an independent copy of $T_n$, there exists a constant
$C$ such that
\begin{eqnarray}
  \label{eq-of2aa}
   \P(T_n+T_n'\geq 2t_n+2y)&\leq&
   \sum_{j\in \Z} \P(T_n\in t_n+[j,j+1))\P(T_n'\geq t_n+2y-(j+1))\nonumber\\
   &\leq &2\P(T_n\geq t_n+2y)+\sum_{j=0}^{\lceil 2(y-1)\rceil} \P(T_n\geq t_n+j)\P(T_n'\geq t_n+2y-(j+1))\nonumber\\
&\leq & C(1+y)^3 \mathrm{e}^{-2c^*y}
   \leq C(1+y)^4 \mathrm{e}^{-2c^*y}
\end{eqnarray}
for any $y\geq 0$ and any positive integer $n$.

For $x\in \Z$, let
$$\Xi_n(x)=\#\{1\leq i\leq 4^n: X_i^n\in [t_n-x-1,t_n-x]\}\,$$
be the number of particles in the BRW at distance roughly $x$ behind the leader.
The following is essentially folklore, we include a proof since we have
not been able to find an appropriate reference.
\begin{prop}
  \label{prop-BRWpair}
  For some universal constant $C$,
  and all $x\in \Z$,
  \begin{equation}
    \label{eq-largeexp}
    \E\Xi_n(x)\leq Cn\mathrm{e}^{c^*x-x^2/2n}\,.
  \end{equation}
 Further, for any $u>-x$ so that $0< x+u\leq  \sqrt{n/2}$,
  \begin{equation}
    \label{eq-of3}
    \P(\Xi_n(x)\geq  \mathrm{e}^{c^* (x+u)})\leq C \mathrm{e}^{-c^*  u+C\log_+(x_{+}+u)}
    \,.
  \end{equation}
\end{prop}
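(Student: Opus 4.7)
For Part (i), I would proceed by direct Gaussian density computation. Each $X_i^n$ is centered Gaussian with variance $n$, so $\E\Xi_n(x) = 4^n\,\P(X_1^n \in [t_n - x - 1, t_n - x])$. Bounding the Gaussian probability by the maximum of its density on that window, substituting $4^n = \mathrm{e}^{(c^*)^2 n/2}$, and using $c^*\bar c = 3/2$ to cancel the leading exponential, one arrives at $\E\Xi_n(x) \leq Cn\,\mathrm{e}^{c^* x - (\bar c\log n + x)^2/(2n)}$. Since $(\bar c \log n + x)^2 \geq x^2$ for $x \geq 0$, this gives the claim in that regime. For $x < 0$, the discrepancy $|(\bar c \log n + x)^2 - x^2|/(2n) = O((\log n)|x|/n)$ is absorbed into the multiplicative constant as long as $|x| \leq O(\sqrt n)$, and for $|x| \gg \sqrt n$ the claim holds trivially because the Gaussian tail alone makes $\E\Xi_n(x)$ negligible.

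For Part (ii), the naive Markov inequality $\P(\Xi_n(x) \geq \mathrm{e}^{c^*(x+u)}) \leq \E\Xi_n(x)\,\mathrm{e}^{-c^*(x+u)} \leq Cn\,\mathrm{e}^{-c^*u}$ is off by a factor of $n$ from the target, the slack reflecting the fact that the unconditional first moment is inflated by rare configurations where many particles are simultaneously near the top. To recover the missing factor of $1/n$, I would employ a truncated first moment (barrier) argument in the spirit of Bramson~\cite{Bramson83}: call a particle $i$ \emph{good} if $X_i^n \in [t_n - x - 1, t_n - x]$ and, in addition, its ancestor trajectory $(X_i^s)_{s=0}^n$ stays below an upper barrier given by the line from $0$ to the endpoint, curved up by a logarithmic amount. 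A ballot-type estimate for the associated Gaussian random-walk bridge shows that, conditional on the endpoint lying in the window, the probability of the barrier event is of order $(1 + x_+ + u)^C/n$; hence for the good count $\tilde\Xi_n(x)$ one has $\E\tilde\Xi_n(x) \leq C(1 + x_+ + u)^C\,\mathrm{e}^{c^* x}$, and Markov's inequality now gives $\P(\tilde\Xi_n(x) \geq \tfrac12\mathrm{e}^{c^*(x+u)}) \leq C(1 + x_+ + u)^C\,\mathrm{e}^{-c^*u}$, matching the desired form $C\,\mathrm{e}^{-c^*u + C\log_+(x_+ + u)}$. The residual count $\Xi_n(x) - \tilde\Xi_n(x)$ enumerates particles whose ancestors breach the barrier; this event forces the BRW at some intermediate generation $k$ to exceed a certain threshold, and a union bound over $k$ combined with Lemma~\ref{lem-tail-BRW} gives a contribution of order $\mathrm{e}^{-c^*(x+u)}$, which is absorbed into the main estimate.

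The main obstacle is the careful construction of the barrier in Part (ii) and verification of the accompanying ballot-type estimate with polynomial-in-$(x_+ + u)$ constants---this is the technical heart of the Bramson-style analysis. I would invoke it largely as a black box, drawing from \cite{Bramson83} or the precursor Lemmas~3.7--3.8 of \cite{BDZ13} cited just above the proposition.
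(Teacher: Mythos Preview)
Your treatment of Part~(i) matches the paper's: both compute $\E\Xi_n(x)=4^n\P(G\in[t_n-x-1,t_n-x])$ for $G\sim\mathcal N(0,n)$ and read off the bound from the Gaussian density.

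For Part~(ii), your barrier/truncated-first-moment approach is valid in outline and is the standard route, but the paper takes a genuinely different and much shorter path. Rather than decomposing $\Xi_n(x)$ into good and bad particles and invoking a ballot estimate, the paper \emph{bootstraps from Lemma~\ref{lem-tail-BRW}} via an embedding trick: run the BRW for $r=2(x+u)^2$ additional generations. On the event $\{\Xi_n(x)\geq K\}$ with $K=\mathrm{e}^{c^*(x+u)}$, each of the $K$ particles near $t_n-x$ launches an independent depth-$r$ BRW; by the \emph{lower} bound in \eqref{eq-of2}, at least one of these reaches height $t_{n+r}+y$ (with $y=u-\bar c\log r$) with probability bounded below by a constant. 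Hence $\P(\Xi_n(x)\geq K)\leq c^{-1}\P(T_{n+r}\geq t_{n+r}+y)$, and now the \emph{upper} bound in \eqref{eq-of2} finishes the job. The whole argument is a few lines and uses Lemma~\ref{lem-tail-BRW} as a pure black box---no new barrier has to be built, no ballot estimate re-derived. Your approach, by contrast, essentially re-enters the machinery behind Lemma~\ref{lem-tail-BRW}; it would succeed, but at the cost of reproducing the Bramson analysis you cite, whereas the paper's ``grow the tree and look at the future maximum'' idea converts the count estimate directly into a tail estimate already in hand.
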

Note that the interest in \eqref{eq-of3}
is only in situations in which
$x+u$ is at most at logarithmic scale (in $n$).
\begin{proof} The estimate
  \eqref{eq-largeexp} is a simple union bound: with $G$ a zero mean
  Gaussian with variance $n$ we have
 $$\E\Xi_n(x)= 4^n \P (G\in [t_n-x-1,t_n-x])\,.$$
 Using standard estimates for the Gaussian distribution and
 the value of $t_n$, the estimate \eqref{eq-largeexp} follows.

We write the proof of
\eqref{eq-of3} in case $x\geq 0$, the general case is similar.
We use Lemma \ref{lem-tail-BRW}. 
Fix $\delta>0$, $r=2 (x+u)^2$ and $y=u-\bar c\log r$.
Note that $\bar c\log r+y+x<\sqrt{r}$.
With $K$ an arbitrary
positive integer,
\begin{align}
  \label{eq-of4a}
\P(T_{n+r}\geq t_{n+r}+y)
  &\geq \P(\Xi_n(x)\geq K) \left[1-\left(\P\left(T_r\leq t_r+\bar c\log r+y+x-\bar c\log (1+r/n)
  \right)\right)^K\right]\nonumber\\
  &\geq   \P(\Xi_n(x)\geq K)
  \left[1-\left(1-C\mathrm{e}^{-c^*(y+x+\bar c\log r)}\right)^K\right]\,,
\end{align}
where in the last inequality we used the {\it lower bound}
in \eqref{eq-of2}.
Taking $K=\mathrm{e}^{c^*(x+u)}$  we have that
$\mathrm{e}^{-c^*(y+x+\bar c\log r)}K=1$ and therefore
$$\P(T_{n+r}\geq t_{n+r}+y)\geq c \P(\Xi_n(x)\geq K)\,.$$ 
Using the
  {\it upper bound} in \eqref{eq-of2}
we get   that
\begin{equation*}\P(\Xi_n(x)\geq K)\leq C \mathrm{e}^{-c^*y}(1+y)\,.
\end{equation*}
This yields \eqref{eq-of3}.
  \end{proof}

 In what follows, we write
  $i\sim_s j$ if the particles $X_i^n$ and $X_j^n$ had a common
  ancestor at generation $n-s$. In the next corollary, the precise value of the constants appearing in the exponent is of no particular significance (nor have we tried to optimize over those).
  \begin{cor}
    \label{cor-BRWpair}
   There exists  a constant $C>0$ such that,
   for any $s\leq n/2$ positive integer,
   and any $z$ positive,
   \begin{equation}
     \label{eq-of4}
     \P(\exists i_1\sim_s i_2:
     X_{i_1}^n+X_{i_2}^n\geq  2t_n- \bar c\log s+z)
     \leq {C[\mathrm{e}^{-0.9 c^*z}}+
     \mathrm{e}^{-0.45 c^*z- 0.7 \log s}]\,.
   \end{equation}
   Similarly,
   \begin{equation}
     \label{eq-of5}
     \P(\exists i_1\sim_{n-s} i_2:
     X_{i_1}^n+X_{i_2}^n\geq  2t_n-\bar c\log s+z)
     \leq {C[\mathrm{e}^{-0.9 c^*z}}+\mathrm{e}^{-0.45 c^*z-0.7 \log s}]\,.
   \end{equation}
In particular, there exists an $r_0$ such that for all $r>r_0$ and all
$n$ large,
\begin{equation}
  \label{eq-of6}
  \E\max_{i_1\sim_s i_2,\,  s\in [r,n-r]} (X_{i_1}^n+X_{i_2}^n)\leq
2t_n -(\bar c/4)\log r\,.
\end{equation}
\end{cor}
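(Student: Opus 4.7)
The plan is to decompose each qualifying pair through its most-recent common ancestor and combine the tail estimate \eqref{eq-of2aa} with the counting estimate \eqref{eq-of3}.

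For \eqref{eq-of4}, let $j$ be the common ancestor at generation $n-s$ and write $X_{i_k}^n = Y_j + Z_{i_k}^{(j)}$. The maximum over pairs diverging at $j$ equals $2Y_j + M_j^{(1)} + M_j^{(2)}$, where $M^{(1)}, M^{(2)}$ are the top two of four i.i.d.\ copies of one Gaussian step plus a depth-$(s-1)$ BRW maximum. A union over the $\binom{4}{2}$ pairs of children together with \eqref{eq-of2aa} yields $\P(M^{(1)}+M^{(2)} \geq 2t_s + 2\nu) \leq C(1+\nu)^4 e^{-2c^*\nu}$ for $\nu \geq 0$. Since $t_n - t_{n-s} - t_s = \bar c\log(s(n-s)/n) = \bar c\log s + O(1)$ when $s \leq n/2$, the event localized to $j$ requires $M^{(1)}+M^{(2)} \geq 2t_s + 2\nu_j$ with $\nu_j = x_j + (\bar c\log s + z)/2 + O(1)$, where $x_j := t_{n-s} - Y_j$.

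The main step is to sum this per-ancestor estimate over the $4^{n-s}$ ancestors without introducing a factor of $n$. I would apply \eqref{eq-of3} to the BRW at generation $n-s$, conditioning on the event $\mathcal{E}_u$ that $\#\{j : x_j \in [x,x+1)\} \leq e^{c^*(x+u)}$ for all relevant $x$. On $\mathcal{E}_u$, the sum becomes
\begin{equation*}
Cs^{-3/2}e^{-c^*z}\sum_{x \geq -(\bar c\log s + z)/2} e^{c^*(x+u)}(1+|x|+\log s+z)^4 e^{-2c^*x},
\end{equation*}
which is geometric with ratio $e^{-c^*}$ in $x$ and is therefore dominated by its lower endpoint, yielding (up to polylog factors) $Ce^{c^*u}s^{-3/4}e^{-c^*z/2}$. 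The event $\mathcal{E}_u^c$ has probability at most $Ce^{-c^*u}$ (again up to polylog) by \eqref{eq-of3}. Balancing these two contributions by choice of $u$ produces a bound of the stated shape \eqref{eq-of4}; the particular constants $0.9, 0.45, 0.7$ emerge as artifacts of this optimization.

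The argument for \eqref{eq-of5} is symmetric, with the common ancestor now at generation $s$ and independent subtrees of depth $n-s$. For \eqref{eq-of6}, apply \eqref{eq-of4} and \eqref{eq-of5} with $z_s := \bar c\log s - (\bar c/4)\log r$, chosen so that $2t_n - \bar c\log s + z_s = 2t_n - (\bar c/4)\log r$ is independent of $s$; for $s \in [r,n-r]$ one has $z_s \geq (3\bar c/4)\log r$, and the per-$s$ tail bound decays in $r$ fast enough that summing over $s$ is $o_r(1)$ uniformly in $n$. Control of the contribution from the complementary event to the expectation uses the deterministic bound $\max(X_{i_1}^n+X_{i_2}^n) \leq 2T_n$ together with the tail \eqref{eq-of2} for $T_n$. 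The hardest technical step throughout is the optimization in the third paragraph: achieving an $n$-independent bound requires \eqref{eq-of3} rather than the cruder \eqref{eq-largeexp}, along with a careful choice of the slack parameter $u$.
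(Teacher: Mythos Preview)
Your overall strategy---decompose over the ancestor's position $x=t_{n-s}-Y_j$, bound the number of ancestors in each bin via \eqref{eq-of3}, and bound the subtree contribution via \eqref{eq-of2aa}---is exactly what the paper does. The gap is in the execution via a single good event $\mathcal{E}_u$ and ``balancing''.

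The sum you write on $\mathcal{E}_u$ is geometric and is indeed dominated by the lower endpoint $x_0=-(\bar c\log s+z)/2$, where $\nu=0$. But for \eqref{eq-of3} to apply at $x=x_0<0$ you need $x_0+u>0$, i.e.\ $u>(\bar c\log s+z)/2$. With any such $u$ the on-event term at $x_0$ is $e^{c^*(x_0+u)}\cdot O(1)\ge 1$, so the on-$\mathcal{E}_u$ bound is trivial and no balancing can save it. Put differently, the balanced value $u=(3/(8c^*))\log s+z/4$ you would like to use violates the constraint needed for \eqref{eq-of3} at the dominant $x$. A related omission: the range $x<x_0$ (ancestor \emph{ahead} of $t_{n-s}$, where $\nu<0$ and the subtree bound is vacuous) is not treated at all; this range is handled by $\P(\Xi_{n-s}(x)\ge1)\le C(1+|x|)e^{-c^*|x|}$ via \eqref{eq-of2}, and it is precisely what produces the paper's second term $e^{-0.45c^*z-0.7\log s}$. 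Finally, even if the balancing went through formally it would yield roughly $e^{-0.25c^*z-0.375\log s}$, not the stated constants; these are too weak to sum over $s\in[r,n-r]$ in the derivation of \eqref{eq-of6}.

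The paper avoids all this by using an $x$-dependent threshold $u^*(x)=\max(|x|,z)$ rather than a single $u$, and by splitting into six cases according to the sign of $x$, whether $|x|\le(\bar c\log s+z)/2$, and whether $x+u^*$ falls in the range of validity of \eqref{eq-of3}. The two terms in \eqref{eq-of4} then arise from different cases: the $e^{-0.9c^*z}$ term from $x\ge0$ (where both \eqref{eq-of3} and \eqref{eq-of2aa} bite), and the $e^{-0.45c^*z-0.7\log s}$ term from $x<0$ near or below $x_0$ (where one falls back on \eqref{eq-of2} for the ancestor tail).
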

\begin{proof}
We first provide the proof of \eqref{eq-of4}; the claim \eqref{eq-of5}
follows similarly and
\eqref{eq-of6} will then be an easy consequence.

The argument (given the estimates in Proposition \ref{prop-BRWpair} is straightforward and routine, even if tedious; it requires controlling the number of particles, at generation $n-s$, that are near $t_{n-s}-x$, that is $\Xi_{n-s}(x)$, and dividing to cases according to $x$ and the different possible values of $\Xi_{n-s}(x)$.

In what follows we set $u^*=u^*(x,z)=\max(|x|,z)$
and $j^*=j^*(x,z)=\lceil
u^*\rceil$.
   We also define
   $\Z_-^{(1)}=\Z_-\cap
     \{x: |x|\leq (z+\bar c\log s)/2\}$,
   $\Z_-^{(2)}=\Z_-\cap
     \{x: |x|> (z+\bar c\log s)/2\}$ and
 ${\cal Z}_n=\{x\in \Z:
 0\leq x+u^*\leq \sqrt{n/4}\}$.
(For negative $x$ one has to exercise some care, this is the reason for the
  definition of $Z_-^{(1)}$ and $Z_-^{(2)}$.)

 The starting point of the proof of
\eqref{eq-of4} is the following estimate, obtained
by decomposing over
the location of particles at generation
$n-s$.
\begin{align}
  \label{eq-newalign}
     \P&(\exists i_{1}\sim_s i_2:
     X_{i_1}^n+X_{i_2}^n\geq  2t_n- \bar c\log s+z)
     \nonumber
     \\
     \nonumber
     &\leq
     \sum_{x\in \Z}
     \P(\Xi_{n-s}(x)\geq \mathrm{e}^{c^* (x+u^*)})+\\
     &
     \sum_{x\in \Z_+\cap {\cal Z}_n} \sum_{j=0}^{j^*(x,z)}
     \P(\Xi_{n-s}(x)\geq \mathrm{e}^{c^*(x+j)})e^{c^*(x+j+1)}
     \P(T_s+T_s'\geq
     2t_s+z+2x+\bar c\log_+ s)+
     \nonumber\\
     &
     \sum_{x\in  \Z_-^{(1)}\cap {\cal Z}_n}
       \sum_{j=|x|}^{j^*(x,z)}
       \P(\Xi_{n-s}(x)\geq \mathrm{e}^{c^*(x+j)})\mathrm{e}^{c^*(x+j+1)}
     \P(T_s+T_s'\geq
     2t_s+z+2x+\bar c\log_+ s)+
     \nonumber\\
     &
     \sum_{x\in \Z_-^{(1)}\cap {\cal Z}_n^c}
     \E(\Xi_{n-s}(x))
     \P(T_s+T_s'\geq
     2t_s+z+2x+\bar c\log_+ s)+\nonumber\\
     &
     \sum_{x\in \Z_+\cap {\cal Z}_n^c}
     \E(\Xi_{n-s}(x))
     \P(T_s+T_s'\geq
     2t_s+z+2x+\bar c\log_+ s)+
     \sum_{x\in \Z_-^{(2)}}
     \P(\Xi_{n-s}(x)\geq 1)
     \nonumber
     \\
     &=:\sum_{x\in \Z}A_1(x)+\sum_{x\in \Z_+\cap {\cal Z}_n
     }A_2(x)
     +\sum_{x\in \Z_-^{(1)}\cap {\cal Z}_n}A_3(x)
     +\sum_{x\in \Z_-^{(1)}\cap {\cal Z}_n^c}A_4(x)+
     \sum_{x\in \Z_+\cap {\cal Z}_n^c}A_5(x)+
     \sum_{x\in \Z_-^{(2)}} A_6(x)\nonumber\\
     &=:A_1+A_2+A_3+A_4+A_5+A_6\,,
   \end{align}
   where $T_s'$ is an independent copy of $T_s$.
%
%
 %
 The contribution to $A_1$ from $x\in {\cal Z}_n$
  can be estimated  using
  \eqref{eq-of3} and one finds
  \begin{equation}
    \sum_{x\in {\cal Z}_n}A_1(x)\leq
    C\sum_{|x|\leq z} \mathrm{e}^{-c^* z+C\log_+ z}+
    2C\sum_{x=z}^\infty \mathrm{e}^{-c^* x+C\log_+x}\leq Ce^{C\log_+ z}
\mathrm{e}^{-c^* z}\,.
    \label{eq-wed1}
  \end{equation}
  A similar computation using \eqref{eq-of3}  and
  \eqref{eq-of2aa} yields
  \begin{eqnarray}
    \label{eq-wed2}
    \nonumber
    \sum_{x\in \Z_+\cap {\cal Z}_n}A_2(x)
    &\leq &
    C \sum_{x\in  \Z_+\cap {\cal Z}_n}\sum_{j=0}^{u^*}
    \mathrm{e}^{-c^*j+C\log_+(x+j)}e^{c^*(x+j+1)}
    \mathrm{e}^{-c^*(z+2x+\bar c\log s)}(z+|x|+\bar c\log s)^4\\
    &\leq &
    C(1+\log s)^4\mathrm{e}^{C\log_+ z}e^{-c^*z}\,.\end{eqnarray}
     To control $A_3$, we repeat the last
     computation and obtain
  \begin{eqnarray}
    \label{eq-wed2a}
    \nonumber
    \sum_{x\in \Z_-^{(1)}\cap {\cal Z}_n}A_3(x)
    &\leq &
    C \sum_{x\in \Z_-^{(1)}\cap {\cal Z}_n}\sum_{j=0}^{u^*}
    \mathrm{e}^{-c^*j+C\log_+(x+j)}\mathrm{e}^{c^*(x+j+1)}
    \mathrm{e}^{-c^*(z+2x+\bar c\log s)}(z+|x|+\bar c\log s)^4\\
    &\leq &
    C(1+\log s)^4\mathrm{e}^{C\log_+ z}\mathrm{e}^{-c^*z/2-c^*\bar c\log s/2}\,.\end{eqnarray}
     To control $A_6$ over ${\cal Z}_n$,
     we repeat the estimate as in controlling $A_1$ and
     obtain
  \begin{equation}
    \label{eq-wed2aa}
    \sum_{x\in  \Z_-^{(2)} \cap {\cal Z}_n}A_6(x)
    \leq
    C(1+\log s+z)\mathrm{e}^{-c^*z/2+c^*\bar c\log s/2}\,.
  \end{equation}

    The estimate for $x\not\in {\cal Z}_n$ is easier, using this time
    \eqref{eq-largeexp}. Indeed, in such a situation either $|x|$ or $z$
    are at least of order $\sqrt{n}$. One has
    $$\sum_{x\not\in {\cal Z}_n}
    A_1(x)\leq
    C\sum_{x\not\in {\cal Z}_n}
    \E\Xi_{n-s}(x)\cdot\mathrm{e}^{-c^*(x+u^*)}
    \leq
    \sum_{x\not\in {\cal Z}_n}
    C n \mathrm{e}^{-c^*u^*-x^2/n} \leq \mathrm{e}^{-0.9 c^*z-2\log n}\,.$$
    (The   constant $0.9$ does not play a particular role in the last inequality, all that is needed is that it is smaller than $1$ and close to $1$ and that $e^{-0.1 c^*u^*}<1/n^3$ for all $n$ large and $x\not\in {\cal Z}_n$.)
     Since $\log s<\log n$ we get
    \begin{equation}
      \label{eq-wed3}
      \sum_{x\not\in {\cal Z}_n}
    A_1(x)\leq
    Cs^{-2} \mathrm{e}^{-0.9 c^* z}\,.
  \end{equation}
  Similarly,
  \begin{equation}
    \label{eq-wed4}
    \sum_{x \in \Z_+\cap {\cal Z}_n^c}A_5(x)
    \leq
    C \sum_{x\in \Z_+\cap {\cal Z}_n^c}
    (1+z+x+\bar c\log s)^4 n \mathrm{e}^{-c^*(x+z)-x^2/2n-c^*\bar c \log s}
    \leq \mathrm{e}^{-0.9 c^*z}\,.
  \end{equation}
  As mentioned earlier, for negative $x\in {\cal Z}_n^c$ one has to exercise some care, this was the reason for the
  definition of $Z_-^{(1)}$ and $Z_-^{(2)}$. One has, using \eqref{eq-of2aa},
  \begin{align}
    \label{eq-wed5}
    \sum_{x \in \Z_-^{(1)}\cap {\cal Z}_n^c}A_4(x)
    &\leq
    C\sum_{x \in \Z_-^{(1)}\cap {\cal Z}_n^c}n(1+z+|x|+\bar c\log s)^4
    \mathrm{e}^{-c^*(x+z+\bar c\log s)}\nonumber\\
    &\leq
    C\mathrm{e}^{-0.45 c^* z-0.99 c^*\bar c \log s}
    \leq \mathrm{e}^{-0.45 c^*z- 0.7\log s}\,,
  \end{align}
  where we have used that $c^* \bar c=3/2$, and again the choice of $0.99$ as the constant multiplying $c^*\bar c$ is of no real importance except that it is close enough to $1$.
  Finally, just using
    \eqref{eq-largeexp}, we get similarly
  \begin{equation}
    \label{eq-wed6}
    \sum_{x \in \Z_-^{(2)}\cap {\cal Z}_n^c}A_6(x)
    \leq
    \sum_{x \in \Z_-^{(2)}\cap {\cal Z}_n^c} Cn\mathrm{e}^{c^*x-x^2/2n}
    \leq \mathrm{e}^{-0.45 c^*z- 0.7\log s}\,.
  \end{equation}
  Summing \eqref{eq-wed1}-\eqref{eq-wed6} yields \eqref{eq-of4}. As mentioned
  before, the proof of \eqref{eq-of5} is similar.
  Because $c^*\bar c=3/2$ and $0.9 \cdot 3/2>1$ we also have then that
  \begin{align*}
  &\P(\exists s\in \{r,\ldots, n/2\}, \exists i_1\sim_s i_2:
     X_{i_1}^n+X_{i_2}^n\geq  2t_n- (\bar c/4)\log r+z)
   \\
   &\leq \sum_{s=r}^{n/2} C[{\mathrm{e}^{-0.9 c^*(z+\bar c \log (s/r^{1/4})
   )}+\mathrm{e}^{-0.45 c^*(z+\bar c \log(s/r^{0.25}))-0.7 \log s}}]
     \leq  C\mathrm{e}^{-0.45 c^*z}\,.
   \end{align*}
 A similar estimates holds for the range $s\in \{n/2,\ldots,n-r\}$.
 Summing those over $z$ yields \eqref{eq-of5}.
  We omit further details.
\end{proof}

     We can now provide the\\
     \noindent{\bf Proof of Lemma~\ref{lem-locationmbrw}.}
     We begin with the upper bound.
     The argument is similar to what was done in the proofs in Section
\ref{sec:comparison}
and therefore we will not provide all details.

    Let $S_v^N$ be a BRW of depth $n$ and set $R_v^N=(1-\epsilon_N)S_v^N+G_v$
     where
     $G_v$ is a collection of i.i.d. zero mean Gaussians of variance $\sigma^2$
     to be defined (independent of $N$) and $\epsilon_N=O(1/n)$.
     Choosing $\sigma$ and $\epsilon_N$ appropriately one can ensure that
     $E( (R_u^N)^2)=E( (\xi_u^N)^2)$ and that
     $E( (R_u^N-R_v^N)^2\geq E( (\xi_u^N-\xi_v^N)^2)$.
   Applying Lemma~\ref{lem-sudakov-fernique-extension} and Corollary
   \ref{cor-BRWpair}, we deduce the upper bound in Lemma
   \ref{lem-locationmbrw}.
%
%
%
%
%
%

We now turn to the proof of the lower bound. The first step is the following
proposition. In what follows, $\tilde \xi_{N,r}^\diamond$ is defined as
$ \xi_{N,r}^\diamond$ except that the maximum is taken only over pairs of
vertices at distance at least $N/4$ from the boundary, and the top two levels
of the MBRW are not added.
\begin{prop}
  \label{prop-lb1} There exist constants $C_1,C_2>0$ such that
for all $N$ large and all $r$,
\begin{equation}
  \label{eq-proplb1}
  \P(\tilde \xi^\diamond_{N,r}\geq 2\tilde m_N-C_1\log\log r)\geq C_2\,.
\end{equation}
\end{prop}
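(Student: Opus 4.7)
My plan is to exploit the multi-scale structure of the MBRW, splitting $\tilde\xi^N$ into a local fine-scale part and a global coarse-scale part, and applying the BRW lower-tail estimate of Lemma~\ref{lem-tail-BRW} at each scale together with tightness of the MBRW maximum from \cite{BZ10}. Fix $\ell = \lceil \log_2 r \rceil + C_0$ for a large absolute constant $C_0$, and for each $v \in V_N$ write
\[
\tilde\xi_v^N = L_v + H_v,
\]
where $L_v$ gathers the contributions in \eqref{eq-MBRW} from scales $k<\ell$ and $H_v$ those from scales $\ell\le k\le n-2$. Since $b_{k,B}^N$ has spatial support of diameter $O(2^k)$, any two vertices $v_1,v_2$ with $\|v_1-v_2\|>2^{\ell+1}$ satisfy $L_{v_1}\perp L_{v_2}$, and $L\perp H$ altogether; furthermore, on each coarse box $B\in\mathcal{BD}_\ell^N$ the field $H$ varies by only $O(1)$ in Gaussian deviation (from scale-$\ell$ boxes straddling $B$), which is absorbed into additive constants throughout.

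With this in place, the aim is to produce, with probability bounded below by a universal constant $C_2$, two coarse boxes $B_1^*,B_2^*\subseteq V_N^{\mathrm{in}}:=\{v:\mathsf{dist}(v,\partial V_N)\ge N/4\}$ at mutual Euclidean distance in $[r,N/r]$ such that (i) $\max_{v\in B_i^*}L_v\ge\tilde m_{2^\ell}-K$ for each $i=1,2$, and (ii) $H_{B_1^*}+H_{B_2^*}\ge 2\tilde m_{N/2^\ell}-2\bar c\log\log r-K$, with $K$ a universal constant. Part (i) is immediate from \cite{BZ10}: the restriction $L|_{B_i^*}$ is, up to negligible boundary corrections, an MBRW of depth $\ell$ on $B_i^*$, and its maximum is tight around $\tilde m_{2^\ell}$; each of the two events then has probability $\ge c_1>0$, and the independence of $L|_{B_1^*}$ and $L|_{B_2^*}$ (both independent of $H$) upgrades this to a joint lower bound $\ge c_1^2$ uniformly in $\sigma(H)$.

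Part (ii) is the technical crux, and I would handle it by a BRW-style common-ancestor argument at the coarse level. The coarse field $\{H_B\}_{B\in\mathcal{BD}_\ell^N}$ lives on a lattice of side $\asymp N/r$ and has covariance structure matching that of an MBRW of depth $n-\ell-O(1)$; via a Slepian comparison in the spirit of Section~\ref{sec:comparison} (with i.i.d.\ noise added and rescaled to match variances and to orient the covariance inequality in the direction required by Lemma~\ref{lem-slepian}), this can be reduced to a bona fide BRW of the same depth. For this BRW I select an MRCA depth-from-leaves $s_0\asymp\log_2 r$ chosen so that pairs with MRCA at this depth have Euclidean separation in $[r,N/r]$. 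Lemma~\ref{lem-tail-BRW} applied to the shared-path BRW (of depth $n-\ell-s_0$) yields some $\rho^*$ with shared-path sum $\ge t_{n-\ell-s_0}-K$ with probability $\ge c_2$; conditionally on $\rho^*$, its four subtrees are i.i.d.\ BRWs of depth $s_0-1$, and Lemma~\ref{lem-tail-BRW} applied to each shows that with positive probability at least two subtrees carry a leaf with BRW value $\ge t_{s_0-1}-K$. Picking one such leaf per good subtree yields $(B_1^*,B_2^*)$ with $H_{B_1^*}+H_{B_2^*}\ge 2(t_{n-\ell-s_0}+t_{s_0-1})-O(1)\ge 2\tilde m_{N/2^\ell}-2\bar c\log\log r-O(1)$. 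Combining (i), (ii), and the independence of $L$ and $H$ gives, on the joint event of probability $\ge C_2$,
\[
\tilde\xi^\diamond_{N,r}\ge 2\tilde m_{2^\ell}+2\tilde m_{N/2^\ell}-2\bar c\log\log r-O(1)\ge 2\tilde m_N-4\bar c\log\log r-O(1),
\]
where the last inequality uses $\tilde m_{2^\ell}+\tilde m_{N/2^\ell}=\tilde m_N-\bar c\log\log r+O(1)$; this proves \eqref{eq-proplb1} with, e.g., $C_1=4\bar c+1$. The delicate step is (ii): the Slepian comparison must run in the direction yielding a lower tail for the coarse MBRW's pair-max, requiring the variance-matching noise addition to be implemented carefully, and the inter-subtree distance analysis must be performed explicitly since the coarse MBRW uses overlapping $\mathcal{B}_k$ covers rather than literal dyadic subtrees, so $C_0$ and $s_0$ must be tuned to ensure the resulting pair's Euclidean separation actually lies in $[r,N/r]$.
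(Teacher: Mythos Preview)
Your multi-scale decomposition is in the right spirit, but you cut at the opposite level from the paper, and this leaves a genuine gap in step (ii). You split at $\ell\approx\log_2 r$, so the fine part $L$ has depth $\approx\log_2 r$ and the coarse part $H$ has depth $\approx n-\log_2 r$; the paper splits at $n-\log_2 r$, so its fine part $\hat\xi$ has depth $n-\log_2 r$ and its coarse part $Y$ has depth only $\log_2 r$. The payoff of the paper's choice is that the pair is built from \emph{adjacent} fine boxes of side $N/(2r)$ (distance $\asymp N/r$), so both members see essentially the same coarse field; the conditional covariance of $Z_x^N=Y_{\zeta^*_{x}}+Y_{\zeta^*_{x_R}}$ matches (up to constants) that of twice an MBRW of depth $\log_2 r$ on $\asymp r^2$ sites, and Step~2 becomes a \emph{single-max} lower bound handled by the tightness of \cite{BZ10}. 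Your step (ii), by contrast, asks for a pair-max lower bound for an MBRW of depth $n-\log_2 r$ under a nontrivial distance constraint --- essentially the same statement you set out to prove, at a slightly different scale.

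Your proposed remedy --- a Slepian reduction of the coarse MBRW to a BRW, then a constructive ancestor argument in the BRW --- does not work, because the comparison runs the wrong way. For two lattice points straddling a high-level dyadic boundary the BRW covariance is near $0$ while the MBRW covariance is near the full variance; hence $\cov_{\mathrm{BRW}}\le\cov_{\mathrm{MBRW}}$ with discrepancy of order $n-\ell$, and Lemma~\ref{lem-slepian} can only yield $\P(\max_{\mathrm{BRW}}\ge t)\ge\P(\max_{\mathrm{MBRW}}\ge t)$, never the reverse. No bounded noise addition or $(1-\epsilon_N)$ rescaling repairs an order-$n$ covariance gap. This is exactly why the paper uses BRW comparisons only for \emph{upper} bounds (Lemma~\ref{lem-compare-3}, the upper half of Lemma~\ref{lem-gapmbrw}) and routes lower bounds through smaller MBRWs. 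To rescue your scheme you would have to run the ancestor construction directly in the coarse MBRW; taking $s_0=O(1)$ there (pair at coarse-distance $O(1)$, Euclidean distance $\asymp r$) collapses (ii) to a single-max for $H$, and then the argument is just the mirror image of the paper's with fine and coarse swapped.
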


We postpone the proof of Proposition \ref{prop-lb1} and show how to deduce the
lower bound in Lemma~\ref{lem-locationmbrw} from it. Fix $C=2^c>1$
integer and consider the MBRW $\xi^{N,C}_v$
in the box $V_{CN}$ with levels up to $n=\log_2(N/4)$ (that is, the last
$c+2$ levels are not taken), and define
$\xi_{N,C,r}^\diamond$ in a natural way. By independence of
the field in sub-boxes of side $N/4$ that are at distance at least
$N/2$ of each other, we get
that
$$\P(\xi_{N,C,r}^\diamond\geq 2\tilde m_N-C_1\log\log r)\geq 1-(1-C_2^2)^{C^2/2}\,.$$
Adding the missing $c+2$ levels we then obtain, by standard estimates
for the Gaussian distribution,
$$\P(\xi_{CN,r}^\diamond\geq 2\tilde m_N- C_1\log\log r-y)\geq 1-(1-C_2^2)^{C^2/2}
-C_3 \mathrm{e}^{-C_4y^2/c}\,.$$
Renaming $N$, we rewrite the last estimate as
$$\P(\xi_{N,r}^\diamond\geq 2\tilde m_N- C_1\log\log r-y-C_5 c)\geq 1-(1-C_2^2)^{C^2/2}
-C_3 \mathrm{e}^{-C_4y^2/c}\,.$$
Choosing $y=C_5c$ and summing over $c$ we obtain that
$\E\xi_{N,r}^\diamond\geq 2\tilde m_N-C_6 \log\log r$, as claimed.

\smallskip
\noindent
{\bf Proof of Proposition
\ref{prop-lb1}.}
We consider $V_N$ as being centered.
There are two steps.\\
\textbf{Step 1} We consider the MBRW
from level $n-\log r-1$ to level $1$. That is, with $r$ fixed define
\begin{equation}\label{eq-MBRWtrunc}
  \hat\xi_v^N = \sum_{k=0}^{n-\log_2 r-1}
  \sum_{B\in \mathcal{B}_k(v)} b_{k, B}^N\,, \mbox{ and } A_{n,r}=V_{N/r}\cap \left( \frac{N}{r}\mathbb{Z} \right)^2\,.
\end{equation}
For each $x\in A_{n,r}$, let $V_{N,r}(x)$ denote the $\mathbb{Z}^2$
box centered at $x$ with side $N/2r$. We call $y\in A_{n,r}$ a
\textit{right neighbor} of $x\in A_{n,r}$ if $x_2=y_2$ and $y_1>x_1$ satisfies
$y_1=x_1+N/r$, and we write $y=x_R$. Finally, we set, for $x\in A_{N,r}$,
$$\xi^*_{N,r,x}=\max_{v\in V_{N,r}(x)} \hat \xi_v^N\,.$$
Note that, by construction, the collection $\{\xi^*_{N,r,x}\}_{x\in A_{n,r}}$
is i.i.d.

A straightforward adaptation of \cite{BZ10} shows that
\begin{equation}\label{eq-MBRWtrunc1}
  \P(\xi^*_{N,r,x}\geq \tilde m_{N/r}-c)\geq g(c)\,,
\end{equation}
where $g(c)\to_{c\to\infty} 1$ is independent of $N,r$.
Let $\zeta_{x,N}^*$ be the (unique) element of
$V_{N,r}(x)$ such that $\xi^*_{N,r,x}=\hat \xi^N_{\zeta_{x,N}^*}$. Let
$$M_{N,r,c}=\{x\in A_{n,r}: \xi^*_{N,r,x}\geq \tilde m_{N/r}-c,
\xi^*_{N,r,x_R}\geq \tilde m_{N/r}-c\}\,.$$
By independence, we get from \eqref{eq-MBRWtrunc1}
that there exists a constant $c$, independent of $N,r$, so that
\begin{equation}
  \label{eq-MBRWtrunc2}
\P(|M_{N,r,c}|\geq r^2/4)\geq \tfrac12\,.
\end{equation}

\noindent{\bf Step 2.} For $x\in M_{N,r,c}$, set
$\bar \xi_{N,r,x}^*= \xi^*_{N,r,x}+\xi^*_{N,r,x_R}$; note that for such
$x$, one has $\bar \xi_{N,r,x}\geq 2\tilde m_{N/r}-2c$.
Define, for $v\in V_N$,
\begin{equation}
  \label{mabat-1}
  Y_v^N=\sum_{k=n - \log_2 r}^{n} 
  \sum_{B\in \mathcal{B}_k(v)} b_{k, B}^N\,,
\end{equation}
and for $x\in A_{N,r}$, set
$$Z_x^N=Y_{\zeta^*_{x,N}}^N+Y_{\zeta^*_{x_R,N}}^N\,.$$
Conditioned on the sigma algebra
${\cal F}_{N,r}$ generated by the
collection of variables $\{\zeta^*_{x,N}\}$, the
collection $\{Z_x^N\}_x$ is a zero mean Gaussian field, with
(conditional) covariance satisfying
$$|\tilde \E(Z_x^N Z_y^N)-4(\log_2 r-\log_2(|x-y|/(N/r))|\leq C\,,$$
for some constant
$C$ independent of $N,r$; here, $\tilde \E$ denotes
expectation conditioned on ${\cal F}_{N,r}$.

It is then straightforward, using the argument in the proof of
Proposition 5.2 in \cite{BZ10}, to verify that
$Z_N^*=\max_{x\in M_{N,r,c}} Z_x^N$ is comparable to twice the maximum of MBRW
run for $\log_2 r$ generations, i.e. that on the event
$|M_{N,r,c}|\geq r^2/4$ there exist positive
constants $c_1,c_2$ independent of $r,N$
(but dependent on $c$)
such that
$$\tilde \P(Z_N^*\geq 2\tilde m_{r}-c_1)\geq c_2\,,$$

We now combine the two steps. Let $x^*_{N}$ be the (unique)
random element of $M_{N,r,c}$
such that
$Z_N^*= Z^N_{x^*_N}$. Then,
on the event $|M_{N,r,c}|\geq r^2/4$, we have
$$\tilde \xi_{N,r}^\diamond\geq Z^N_{x^*_N}+2\tilde m_{N/r}-2c\,.
$$
Therefore, with probability at least $g(c)\cdot c_2$, we get that
$$\tilde \xi_{N,r}^\diamond\geq 2(\tilde m_r+\tilde m_{N/r})-c_4
\geq 2\tilde m_N -c_5 \log\log r\,,$$
completing the proof of the proposition. \qed

\smallskip
Combined with Proposition~\ref{prop-compare-1}, Lemma~\ref{lem-locationmbrw} immediately gives the following consequence.
\begin{cor}\label{cor-sum-pairs-gff}
There exist absolute constants $c_1,c_2, C>0$ so that
$$2 m_N-c_2\log\log r - C \leq \E\eta^\diamond_{N,r}\leq 2 m_N-c_1\log\log r + C\,.$$
\end{cor}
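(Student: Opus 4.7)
The plan is to combine Proposition~\ref{prop-compare-1} with Lemma~\ref{lem-locationmbrw} and then translate from the MBRW centering $\tilde m_N$ back to the GFF centering $m_N$. Concretely, Proposition~\ref{prop-compare-1} gives
$$\sqrt{\tfrac{2\log 2}{\pi}}\,\E\xi^\diamond_{2^{-\kappa}N,r}\leq \E\eta^\diamond_{N,r}\leq \sqrt{\tfrac{2\log 2}{\pi}}\,\E\xi^\diamond_{2^{\kappa}N,r},$$
and Lemma~\ref{lem-locationmbrw} sandwiches each of $\E\xi^\diamond_{2^{\pm\kappa}N,r}$ between $2\tilde m_{2^{\pm\kappa}N}-c_2\log\log r$ and $2\tilde m_{2^{\pm\kappa}N}-c_1\log\log r$. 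Since $\tilde m_M=\sqrt{\pi/(2\log 2)}\,m_M$ by definition, multiplying through by $\sqrt{2\log 2/\pi}$ converts the MBRW centering into the GFF centering at side length $2^{\pm\kappa}N$. The multiplicative constant $\sqrt{2\log 2/\pi}$ in front of the $\log\log r$ term can be absorbed by redefining $c_1,c_2$.

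The only remaining point is that the result is stated with $m_N$, not $m_{2^{\pm\kappa}N}$, so I need to check that replacing the latter by the former costs only an additive $O(1)$. This follows from the asymptotic formula \eqref{eq-bramson-zeitouni}: since $\kappa$ is a fixed universal constant (from Proposition~\ref{prop-compare-1}),
$$m_{2^{\pm\kappa}N}-m_N = 2\sqrt{2/\pi}\Bigl(\pm\kappa\log 2-\tfrac{3}{8}\log\tfrac{\log(2^{\pm\kappa}N)}{\log N}\Bigr)+O(1),$$
and the bracketed quantity stays bounded in $N$. The discrepancy is therefore absorbed into the additive constant $C$ in the corollary. There is no real obstacle here; the work is purely bookkeeping, and the only care needed is to keep the scale-shift term $m_{2^{\pm\kappa}N}-m_N$ under control via \eqref{eq-bramson-zeitouni}.
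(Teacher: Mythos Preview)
Your proposal is correct and is exactly the approach the paper takes; indeed, the paper merely states that the corollary follows immediately from combining Proposition~\ref{prop-compare-1} with Lemma~\ref{lem-locationmbrw}, and you have simply written out the (routine) bookkeeping that this entails, including the harmless absorption of the scale shift $m_{2^{\pm\kappa}N}-m_N=O(1)$ via \eqref{eq-bramson-zeitouni}.
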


\subsection{The right tail for the maximum}
In this subsection, we compute the right tail for the maximum of the MBRW.
\begin{lemma}\label{lem-gapmbrw}
There exists a constant $C>0$ such that for all
$y \in [1,\sqrt{n})$ and $n$ large enough,
$$C^{-1} y \mathrm{e}^{-2 \sqrt{\log 2} y}\leq
\P(\mbox{$\max_v$} \xi_v^N>\tilde m_N+ y )\leq
C y \mathrm{e}^{-2\sqrt{\log 2} y}\,.$$
\end{lemma}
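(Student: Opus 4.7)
The plan is to transfer both sides of the BRW tail estimate of Lemma~\ref{lem-tail-BRW} to the MBRW by exploiting the multiscale ``trajectory''
$$S_k(v):=\sum_{j=0}^{k}\sum_{B\in\mathcal{B}_j(v)}b_{j,B}^N,\qquad k=0,\dots,n,$$
with $S_n(v)=\xi_v^N$ and $\var S_k(v)=k+1$. This trajectory plays the role of an ancestral path in BRW: two trajectories $S_\cdot(u)$, $S_\cdot(v)$ are perfectly correlated at scales $k$ with $2^k\geq d^N(u,v)$ and nearly independent at smaller scales. Since $\tilde m_N=t_n+O(1)$, the target $Cy\mathrm{e}^{-c^*y}$ is precisely the BRW tail of Lemma~\ref{lem-tail-BRW} (with the matching lower bound credited to A\"id\'ekon), modulo the Euclidean (rather than dyadic) notion of common ancestry.

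For the upper bound I would follow the Bramson barrier argument of \cite[Lemmas~3.7, 3.8]{BDZ13}. Fix an entropic envelope $E_k=\tfrac{k}{n}(t_n+y)+f(k)$ with $f$ concave of order $\log(k\wedge(n-k))$, and bound
$$\P(\mbox{$\max_v$}\,\xi_v^N\geq\tilde m_N+y)\leq \sum_v\P\bigl(\xi_v^N\geq\tilde m_N+y,\ S_k(v)\leq E_k\ \forall k\bigr)+\P(\exists v,k:\,S_k(v)>E_k).$$
The first sum is estimated by conditioning on $\xi_v^N$, which turns $(S_k(v))_{k=0}^n$ into a Gaussian bridge; a standard ballot estimate gives a stay-below probability of order $y/n$, and combined with the pointwise Gaussian tail ($\approx n\mathrm{e}^{-c^*y}/4^n$, using $(c^*)^2/2=\log 4$ and $c^*\bar c=3/2$) and $|V_N|=4^n$, this produces the required $Cy\mathrm{e}^{-c^*y}$. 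The barrier-violation term is handled by summing over the first scale of crossing and applying a single-level Gaussian tail to the overshoot; the sum converges thanks to the slack built into $f$.

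For the lower bound I would apply Paley--Zygmund to the truncated count
$$\mathcal{M}:=\bigl\{v\in V_N:\xi_v^N\in[\tilde m_N+y,\tilde m_N+y+1],\ S_k(v)\leq E_k\ \forall k\bigr\}.$$
The first moment satisfies $\E|\mathcal{M}|\geq cy\mathrm{e}^{-c^*y}$ by the matching Gaussian-bridge lower ballot bound. For the second moment I would stratify pairs $(u,v)$ by $s=n-\lceil\log_2 d^N(u,v)\rceil$, the MBRW analog of the generation of common ancestry: the barrier forces the two trajectories to coincide up to scale $s$ near the deterministic line $k\mapsto c^*k+O(\log k)$ and then evolve as independent conditioned bridges, which yields per-scale contributions summable to $\E|\mathcal{M}|^2\leq C(\E|\mathcal{M}|)^2$, exactly parallel to the BRW computation behind Corollary~\ref{cor-BRWpair}. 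The main technical obstacle in the entire scheme is verifying that the Gaussian-bridge and two-bridge asymptotics really apply to MBRW trajectories despite the overlapping-box structure of the increments; this amounts to checking, via Lemma~\ref{lem-covariance}, that $(S_k(v))_k$ is comparable to a Gaussian random walk to the precision required by the barrier argument. Once this comparison is in place, both bounds follow, and the argument is essentially the one executed in \cite[Section~3]{BDZ13}.
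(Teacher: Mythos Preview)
Your lower-bound plan is essentially the paper's own proof: define the MBRW trajectory $\xi_v(t)$ level by level, set up the barrier-truncated events $A_v(y)$, compute the first moment by a Girsanov tilt plus a ballot estimate, and control the second moment by stratifying pairs over the split scale $s$ and using that the two trajectories decouple below that scale. The paper even flags the same technical point you raise---the overlapping-box structure prevents the two trajectories from literally coinciding above the split, which is why \eqref{eq-P-A-v-w} is an inequality rather than an equality---and handles it exactly as you suggest.

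For the upper bound, however, the paper takes a much shorter route than your proposed barrier argument. Rather than run a Bramson-style first-moment/envelope computation directly on the MBRW, it simply compares the MBRW to the BRW via Slepian: one checks that $\E\xi_v^N\xi_{v'}^N+C\geq\E\bar\xi_v^N\bar\xi_{v'}^N$ for the associated BRW $\bar\xi$, equalises variances by adding a common Gaussian $G$ to the MBRW and i.i.d.\ Gaussians $G_v$ to the BRW, and applies Slepian's inequality to get
\[
\P(\mbox{$\max_v$}\,\xi_v^N\geq t)\leq 2\,\P(\mbox{$\max_v$}\,\bar\xi_v^{\lceil C\rceil N}\geq t),
\]
after which the upper bound in \eqref{eq-of2} for the BRW finishes the job in one line. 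Your direct approach would also work and has the virtue of being self-contained (no BRW black box), but it essentially duplicates the barrier machinery already needed for the lower bound; the paper's comparison trick buys the upper bound almost for free once Lemma~\ref{lem-tail-BRW} is available.
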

Combined with Lemma~\ref{lem-compare-2}, the preceding lemma directly yields Theorem~\ref{thm-right-tail}.
\begin{proof}[{\bf Proof of Lemma~\ref{lem-gapmbrw}}]
The upper bound is an immediate comparison argument. Consider the MBRW
$\xi_v^N$, and consider the associated BRW $\bar \xi_v^N$. As noted in
\cite[Prop. 3.2]{BZ10}, $\E(\xi_v^N)^2=\E(\bar \xi_v^N)^2$ and
there exists a constant $C$ such that for $v\neq v'$,
$$\E \xi_v^N\xi_{v'}^N +C \geq
\E \bar \xi_v^N\bar \xi_{v'}^N  \,.$$
Let $G, G_v$ be iid Gaussian variables of zero mean and variance $C$,
independent of the fields $\{\xi,\bar\xi\}$.
Set $\mu_v^N=\xi_v^N+G$ and $\bar \mu_v^N=\bar \xi_v^N+G_v$. Clearly, it is
still the case that $\E(\mu_v^N)^2=\E(\bar \mu_v^N)^2$, while now,
$$\E \mu_v^N\mu_{v'}^N  \geq
\E \bar \mu_v^N\bar \mu_{v'}^N\,, \, \mbox{ for
 }v\neq v'\,.$$
We conclude from Slepian's lemma that
$$\P(  \mbox{$\max_v$} \bar \mu^N_v\geq t)\geq \P(\mbox{$ \max_v$}\mu^N_v\geq t)\geq \tfrac12 \P(\mbox{$\max_v$}\xi^N_v \geq t)\,.$$
(The last inequality because $\P(G\geq 0)=1/2$.)
On the other hand, $\max_v\mu^N_v$ is trivially stochastically dominated by
$\max_v \bar \xi^{\lceil C \rceil N}_v$. Combining these with the upper bound
in \eqref{eq-of2} yields the upper bound in the lemma.

The main work goes to the proof of the lower bound. Recall that $N  = 2^n$. Set $a_n = 2 \sqrt{\log 2}n - \frac{3}{4\sqrt{\log 2}} \log n$.
To simplify notation,
we drop the superscript and denote by $\{\xi_v: v\in V\}$ a MBRW of $n$ levels. For $0\leq t\leq n$, let $\xi_v(t)$ be the sum of the Gaussians variables in the first $t$-levels for $\xi_v$ (i.e., summing over the Gaussian variables associated to boxes of side length $2^n, 2^{n-1}, \ldots, 2^{n-t}$). Define
$$A_v(y) = \{\xi_v \in [a_n+y-1, a_n+y], \xi_v(t) \leq \tfrac{a_n t}{n} + y \forall t\in [n]\}\,, \mbox{ and } Z(y) = \sum_{v\in V} \one_{A_v}(y)\,.$$
Therefore, writing $\bar \xi_v(t) = \xi_v(t) - \tfrac{a_n t}{n}$ we can compute
\begin{align*}
\P(A_v(y)) = \P(\bar \xi_v(n) \in [y-1, y], \bar \xi_v(t) \leq y \mbox{ for all } t\in [n])\,.
\end{align*}
Let $\mathbb{Q}$ be a probability measure under which $\bar \xi_v$ is a Gaussian random walk. Then we have
\begin{equation}\label{eq-change-of-measure}
\frac{d\mathbb{P}}{d\mathbb{Q}} = \mathrm{e}^{-\tfrac{a_n}{n} \bar \xi_v(n) - \tfrac{a_n^2}{2n^2} n}\,.
\end{equation}
Altogether, we obtain that
\begin{align*}
\P(A_v(y)) &= \E_{\mathbb{Q}} (\frac{d\P}{d\mathbb{Q}}\one_{A_v(y)}) = \mathrm{e}^{-\tfrac{a_n^2}{2n}}\mathrm{e}^{-\tfrac{a_n}{n} y} \mathbb{Q}(A_v(y))\\
& \asymp
n^{3/2} 4^{-n} \mathrm{e}^{-2\sqrt{\log 2} y} \tfrac{y}{n^{3/2}}
= 4^{-n} \mathrm{e}^{-2\sqrt{\log 2} y} y\,,
\end{align*}
where the notation $\asymp$ means that the ratio of the
left and right hand sides is bounded above and below
by absolute positive constants.
Note that we have applied the
Ballot theorem (see, e.g., \cite[Theorem 1]{ABR08}) to estimate $\mathbb{Q}(A_v(y))$. This implies that
\begin{equation}\label{eq-1st-moment-Z-y}
\E Z(y) \asymp \mathrm{e}^{-2\sqrt{\log 2} y } y \,.
\end{equation}

Next we turn to computing the second moment of $Z(y)$. To this end, consider $v$ and $w$ such that $v$ and $w$ splits in level $t_s = n-s$ (denoted by $v\sim_s w$). That is to say, the boxes of side length $2^s$ associated to $v$ are disjoint from those associated to $w$. Write $\bar \xi_v(t) = \xi_v(t) - \frac{a_n}{n}t$, $\bar \xi_w(t) = \xi_w(t) - \frac{a_n}{n}t$. We compute (writing $\alpha_n = a_n/n$)
\begin{align}\label{eq-P-A-v-w}
&\P(A_v (y) \cap A_w(y))\nonumber\\
= &\P(\bar \xi_v(t) \leq y, \bar \xi_w(t) \leq y \mbox{ for all }t\in [n], \bar \xi_v(n), \bar \xi_w(n) \in [y-1, y])\nonumber\\
= &\sum_{z\leq y}\P(\bar \xi_v(t) \leq y, \bar \xi_w(t) \leq y \mbox{ for all } t\in [n], \bar \xi_v(n), \bar \xi_w(n) \in [y-1, y], \bar \xi_v(t_s) \in [z-1, z])\nonumber\\
\leq &\sum_{z\leq y}\P(\bar \xi_v(t) \leq y,  \mbox{ for all } t\in [t_s], \bar \xi_v(t_s) \in [z-1, z]) \Gamma_{y, z, s}^2\,,
\end{align}
where
$$\Gamma_{y, z, s} = \sup_{\bar \xi_v(t_s)\in [z-1,z]}
\P(\bar \xi_v(t)\leq y \mbox{ for all } t_s<t\leq n,
\bar \xi_v(n) \in [y-1, y] \mid \bar \xi_v(t_s) ).$$
Note that in \eqref{eq-P-A-v-w}, we have an inequality as opposed to an equality which would hold for BRW. For $v\sim_s w$, the processes $\{\xi_v(t): t\in [t_s]\}$ and $\{\xi_w(t): t\in t_s\}$ are not precisely the same and therefore
$$\{\bar \xi_v(t) \leq y,  \mbox{ for all } t\in [t_s], \bar \xi_v(t_s) \in [z-1, z]\} \neq \{\bar \xi_w(t) \leq y,  \mbox{ for all } t\in [t_s], \bar \xi_w(t_s) \in [z-1, z]\}\,.$$
This explains the inequality in \eqref{eq-P-A-v-w}. By
the Ballot theorem,
\begin{equation}\label{eq-Gamma-y-z-s}
\Gamma_{y, z, s} \leq \P(\xi_v(r)\leq y - z \mbox{ for all } r \in [s], \bar \xi_v(s) \in [y-1 - z, y-z]) \lesssim \big (\frac{y-z+1}{s^{3/2}}\big)^2\,,
\end{equation}
where the notation $\lesssim$ means that the left hand side is bounded by the right hand side up to an absolute constant.
Recalling \eqref{eq-change-of-measure} and applying a
slight variation of the
Ballot theorem (see, e.g., \cite[Corollary 2]{ofernotes}), we obtain that
\begin{align*}
\P(\bar \xi_v(t) \leq y,  \mbox{ for all } t\in [t_s], \bar \xi_v(t_s)
\in [z-1, z]) \lesssim \mathrm{e}^{-\frac{\alpha_n^2}{2}(n-s)}
\mathrm{e}^{-\frac{\alpha_n^2}{2} s\cdot2} \mathrm{e}^{-\alpha_n z}
\mathrm{e}^{-2 \alpha_n (y-z)} \frac{y(y-z+1)}{(n-s)^{3/2}}\,.
\end{align*}
Plugging the preceding inequality and \eqref{eq-Gamma-y-z-s} into \eqref{eq-P-A-v-w}, we get that
\begin{align*}
\P(A_v (y) \cap A_w(y))\lesssim   (y+1)
\sum_{z\leq y} \mathrm{e}^{-\frac{\alpha_n^2 n}{2}}
\mathrm{e}^{-\frac{\alpha_n^2}{2} s} \mathrm{e}^{-\alpha_n y}
\mathrm{e}^{-\alpha (y-z)} \frac{(y-z+1)^3}{s^3 (n-s)^{3/2}}\lesssim
\frac{y 4^{-n} n^{3/2} 4^{-s} n^{3s/2n}
\mathrm{e}^{-\alpha_n y}}{s^3 (n-s)^{3/2}}\,,
\end{align*}
where the summation is  over all $z$ such that $y-z$ is a non-negative integer.
Summing over $v\sim_s w$ and also over $s$, we obtain from
a
straightforward computation that
\begin{align*}
\E (Z(y))^2  = \sum_{s=1}^n \sum_{v\sim_s w}  \P(A_v(y) \cap A_w(y))
&\lesssim y
 \mathrm{e}^{-\alpha_n y}\sum_{s=1}^n \frac{n^{3/2}
 n^{3s/2n}}{ s^3(n-s)^{3/2}} \lesssim
y
\mathrm{e}^{-\alpha_n y} \sum_{s=1}^n
\frac{n^{3s/2n}}{s^3(1 - s/n)^{3/2}}\\
&\lesssim y \mathrm{e}^{-\alpha_n y} \sum_{s=1}^{n/2}
\frac{n^{3s/2n}}{s^3} +
y\mathrm{e}^{-\alpha_n y}\sum_{s=n/2}^n \frac{1}{(n-s)^{3/2}}
\lesssim y\mathrm{e}^{-\alpha_n y}\,.
\end{align*}
Recalling \eqref{eq-1st-moment-Z-y} and that $\alpha_n = a_n/n$, we complete the proof on the lower bound.
\end{proof}

\section{Maxima of the Gaussian free field} \label{sec:maxima}

This section is devoted to the study of the maxima of the GFF, for which we will harvest results from previous sections.

\subsection{Physical locations for large values in Gaussian free field}

This subsection is devoted to the proof of Theorem~\ref{thm-location}. We first briefly explain the strategy for the proof. Suppose that there exists a number $\epsilon>0$ such that the limiting probability in \eqref{eq-location} is larger than $\epsilon$ along a subsequence $\{r_k\}$. Then, we can take $N' \asymp N/\epsilon$ such that the same limiting probability with $N$ replaced by $N'$ will approach almost 1. This would then (roughly) imply that the expected value of $\eta^\diamond_{N', r_k}$ will exceed $2m_N - \delta\log\log r_k-O(1)$ (where $\delta>0$ is a small number), contradicting with Corollary~\ref{cor-sum-pairs-gff} as $k\to \infty$. The details of the proof are carried out in what follows.

We start with the following preliminary lemma.
\begin{lemma}\label{lem-variance-gff-condition}
For $N'>8N$, consider a discrete ball $B$ of radius $8N$ in a box $V_{N'}$ of side length $N'$. Let $B^*\subset B$ be a box of side length $N$ such that the centers of $B$ and $B^*$ coincide. Let $\{\eta_v: v\in V_{N'}\}$ be a GFF on $V_{N'}$ with Dirichlet boundary condition and let
$$\psi_v = \E(\eta_v \mid \{\eta_u: u\in \partial B\})\,.$$
Then for $v\in B^*$, we have $\var \psi_v = O(\log (N'/N))$.
\end{lemma}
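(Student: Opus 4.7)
My plan is to reduce $\var\psi_v$ to a pointwise bound on the two-dimensional simple random walk Green function. The first step will be to rewrite $\psi_v$ as the discrete harmonic extension of $\eta|_{\partial B}$ to $B$. By the Gibbs--Markov property of the GFF, one has the orthogonal decomposition $\eta_v=\psi_v+\tilde\eta_v$ on $B$, where $\tilde\eta$ is a GFF on $B$ with Dirichlet boundary condition on $\partial B$, independent of the sigma-algebra generated by $\{\eta_u\}_{u\in\partial B}$. Consequently $\psi_v=\sum_{u\in\partial B}H_B(v,u)\,\eta_u$, where $H_B(v,\cdot)$ is the discrete harmonic measure of $\partial B$ seen from $v$, and $\E(\psi_v\,\eta_v)=\E\psi_v^2$. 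Combining these gives the key identity
$$\var\psi_v=\sum_{u\in\partial B}H_B(v,u)\,G_{V_{N'}}(u,v)\leq \max_{u\in\partial B}G_{V_{N'}}(u,v)\,.$$

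Next, I will estimate $G_{V_{N'}}(u,v)$ uniformly for $u\in\partial B$ and $v\in B^*$ using the classical planar representation
$$G_{V_{N'}}(u,v)=\E_u\bigl[a(S_{\tau_{\partial V_{N'}}}-v)\bigr]-a(u-v)\,,$$
where $a(x)=\tfrac{2}{\pi}\log|x|+\kappa_0+o(1)$ is the potential kernel of simple random walk on $\Z^2$. Because $B^*$ is concentric with $B$ and has side $N$ while $B$ has radius $8N$, every such pair satisfies $|u-v|\geq 7N-O(1)$, so $a(u-v)\geq \tfrac{2}{\pi}\log N-O(1)$. On the other hand $|S_{\tau_{\partial V_{N'}}}-v|\leq \diam(V_{N'})=O(N')$, so $\E_u[a(S_{\tau_{\partial V_{N'}}}-v)]\leq \tfrac{2}{\pi}\log N'+O(1)$. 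Subtracting yields $G_{V_{N'}}(u,v)\leq \tfrac{2}{\pi}\log(N'/N)+O(1)$, and hence $\var\psi_v=O(\log(N'/N))$ as required.

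The strategy is essentially routine and I do not anticipate any serious conceptual obstacle; the only mild technical point is that the asymptotics of the planar potential kernel must be applied uniformly in $u\in\partial B$ and in the exit location on $\partial V_{N'}$. This is standard and may be invoked from the classical estimates for two-dimensional simple random walk (as in, e.g., Lawler's textbook).
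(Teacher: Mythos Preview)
Your proof is correct and takes a genuinely different, more economical route than the paper's. The paper expands
\[
\var\psi_v=\sum_{u,w\in\partial B}a_{v,u}a_{v,w}\,G_{\partial V_{N'}}(u,w)
\]
and then invokes two separate random--walk lemmas from Lawler--Limic: one to show that the harmonic measure coefficients $a_{v,w}$ are uniformly $\Theta(1/N)$ for $v\in B^*$, and another (an annulus escape--probability estimate) to bound the row sums $\sum_{w\in\partial B}G_{\partial V_{N'}}(u,w)$ by $O(N\log(N'/N))$. You bypass both steps by using the orthogonality $\psi_v\perp(\eta_v-\psi_v)$ to collapse the double sum into a single harmonic average,
\[
\var\psi_v=\Cov(\psi_v,\eta_v)=\sum_{u\in\partial B}H_B(v,u)\,G_{\partial V_{N'}}(u,v)\le \max_{u\in\partial B}G_{\partial V_{N'}}(u,v),
\]
and then bound the right-hand side pointwise via the potential--kernel representation. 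This is shorter and needs only the classical asymptotics $a(x)=\tfrac{2}{\pi}\log|x|+O(1)$; you never need to know that the harmonic measure is comparable to uniform, nor the escape probability from an annulus. The paper's approach does deliver the two--sided estimate $a_{v,w}\asymp 1/N$ as a by-product, but that is not needed for the lemma as stated. One cosmetic point: the paper's Green function is indexed by the killing set, so what you write as $G_{V_{N'}}$ is $G_{\partial V_{N'}}$ in the paper's notation.
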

\begin{proof}
We need the following lemma, which implies that the harmonic measure on $\partial B$ with respect to any $v\in B^*$ is comparable to the
uniform distribution.
\begin{lemma}\cite[Lemma 6.3.7]{LL10}
Let $\mathcal{C}_n \subset \mathbb{Z}^2$ be a discrete ball of
radius $n$ centered at the origin. There exist absolute constants
$c, C>0$ such that for all $x\in \mathcal{C}_{n/4}$ and $y\in
\partial \mathcal{C}_n$
$$c/n\leq \P_x(\tau_{\partial \mathcal{C}_n} = y) \leq C/n\,.$$
\end{lemma}
The Gauss-Markov  property of the GFF allows
one to write the conditional expectation for GFF at a vertex given values on the boundary as a harmonic mean for the values over the boundary (see e.g.
 \cite[Theorem
1.2.2]{Dynkin80}). Combined with the preceding lemma, this implies that for $v\in
B^*\subset B$, we have
\begin{equation}\label{eq-convex-coeff}
\psi_v = \sum_{w\in
\partial B} a_{v, w} \eta_w, \mbox{ where } c/N\leq a_{v,
w} \leq C/N\,.\end{equation} Therefore, we have
\begin{equation}\label{eq-variance}
\var \psi_v = \Theta(1/N^2) \sum_{u, w\in \partial B}
G_{\partial V_{N'}}(u, w)\,.\end{equation}
 In order to estimate the sum of Green
functions, we use the next lemma.
\begin{lemma}\label{lem-annulus}\cite[Prop. 6.4.1]{LL10}
For $\ell< n$ and $x\in \mathcal{C}_n \setminus \mathcal{C}_\ell$, we have
$$\P_x(\tau_{\partial \mathcal{C}_n} < \tau_{\partial \mathcal{C}_\ell}) = \frac{\log |x| - \log \ell + O(1/\ell)}{\log n - \log \ell}\,.$$
\end{lemma}
By the preceding lemma, we have
$$\P_u(\tau_{\partial V_{N'}} < \tau^+_{\partial B}) \geq O(1/(N \log (N'/N)))
\mbox{ for all } u\in \partial B\,,$$ where $\tau^+_{\partial B} = \min\{t
\geq 1: S_t\in
\partial B\}$. Thus, $\sum_{w\in \partial B}
G_{\partial V_{N'}}(u, w) = O(N \log (N'/N))$. Therefore,
\begin{equation*}
\var(\psi_v) =  O(\log (N'/N))\,, \mbox{ for all } v\in B^*\,. \qedhere
\end{equation*}
\end{proof}

The following lemma, using the sprinkling idea, is the key to
the proof of Theorem~\ref{thm-location}.
In the lemma, for $\epsilon,\delta>0$ we set
$C(\delta, \epsilon) = 2\log \delta /\log (1 - \epsilon)$.

\begin{lemma}\label{lem-sprinkling}
  There exist a a constant $C>0$ such that, if
\begin{equation}\label{eq-assumption}
\P(\exists v, u \in V_N:  r \leq |v - u|\leq N/r \mbox{ and } \eta_{u}, \eta_v \geq m_N - \lambda) \geq \epsilon \,\end{equation}
for some $\epsilon, \lambda>0$ and $N,r \in \mathbb{N}$,
then
for any $\delta>0$, setting
$N'$ to be the smallest power of $2$ larger than or equal to
$C(\delta, \epsilon)N$ and
$\gamma = C(\sqrt{\log C( \delta, \epsilon)/\delta})$, the following holds
$$\P( \eta^{\diamond}_{N', r} \geq 2m_N - 2\lambda - \gamma) \geq 1 - \delta\,.$$
\end{lemma}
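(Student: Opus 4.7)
The plan is a sprinkling argument that exploits the spatial Markov property: embed many approximately independent copies of the hypothesized event into $V_{N'}$, arrange for at least one to succeed with probability $\geq 1 - \delta/2$, and then show that the harmonic correction from the Markov decomposition does not spoil the resulting sum of values. First, tile $V_{N'}$ with $K \geq (N'/(16N))^2$ pairwise disjoint discrete balls $B_1,\ldots,B_K$ of radius $8N$, each containing a central sub-box $B_i^* \cong V_N$ of side $N$; since $N' \geq C(\delta,\epsilon)N$ we have $K \geq C(\delta,\epsilon)$ once $C(\delta,\epsilon)$ exceeds an absolute constant (the regime of small $C(\delta,\epsilon)$ being trivial). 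Applying the domain Markov property twice — first decomposing $\eta^{V_{N'}}|_{B_i} = \tilde\eta^{B_i} + \psi_i$, with $\tilde\eta^{B_i}$ a GFF on $B_i$ and $\psi_i$ its harmonic extension, and then $\tilde\eta^{B_i}|_{B_i^*} = \tilde\eta^{B_i^*} + \psi_i'$ analogously — produces
\[
\eta^{V_{N'}}|_{B_i^*} \;=\; \tilde\eta^{B_i^*} + \psi_i' + \psi_i.
\]
Because the $B_i$ are disjoint, the sub-box GFFs $\{\tilde\eta^{B_i^*}\}_{i=1}^K$ are mutually independent (each distributed as the GFF on $V_N$), and are jointly independent of the collection of harmonic corrections $\{\psi_i,\psi_i'\}_i$.

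Let $E_i$ be the event, depending only on $\tilde\eta^{B_i^*}$, that some pair $u,v \in B_i^*$ satisfies $r \leq |u-v| \leq N/r$ and $\tilde\eta^{B_i^*}_u, \tilde\eta^{B_i^*}_v \geq m_N - \lambda$. The hypothesis \eqref{eq-assumption} gives $\P(E_i) \geq \epsilon$, so by independence $\P(\bigcap_i E_i^c) \leq (1-\epsilon)^K \leq \delta^2$, and hence $\bigcup_i E_i$ occurs with probability at least $1 - \delta/2$ by the definition of $C(\delta,\epsilon)$.

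On $\bigcup_i E_i$ make a measurable choice of an index $i^*$ and a witnessing pair $(u^*,v^*) \in B_{i^*}^*$. Then
\[
\eta^{V_{N'}}_{u^*} + \eta^{V_{N'}}_{v^*} \;\geq\; 2(m_N - \lambda) + H, \qquad H \df \sum_{w \in \{u^*,v^*\}} \bigl( \psi_{i^*}(w) + \psi_{i^*}'(w) \bigr).
\]
The key decoupling is that $(i^*,u^*,v^*)$ is measurable with respect to $\sigma(\{\tilde\eta^{B_j^*}\})$, while $\{\psi_i,\psi_i'\}$ is independent of that sigma algebra. Hence conditionally on $(i^*,u^*,v^*)$, $H$ is a centered Gaussian; by Lemma~\ref{lem-variance-gff-condition} (applied to $\psi_i$), supplemented by an analogous $O(1)$ estimate for $\var \psi_i'$ (since $B_i^*$ lies deep inside $B_i$), its variance is $O(\log(N'/N)) = O(\log C(\delta,\epsilon))$. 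A one-point Gaussian tail bound then yields $\P(H \leq -\gamma) \leq \delta/2$ for $\gamma$ of the order claimed in the statement. Combining with the preceding paragraph via a union bound, and observing that $|u^*-v^*| \leq N/r \leq N'/r$, completes the proof.

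The main obstacle is the pointwise control of $H$ at a random location. A naive attempt to bound $H$ uniformly over all pairs in $B_{i^*}^*$ would pay a $\sqrt{\log N}$ factor from the supremum of a Gaussian field, dwarfing $\gamma$. The Markov decoupling between the sub-box GFFs and the harmonic corrections is precisely what reduces this to a single-point Gaussian tail with worst-case variance $O(\log C(\delta,\epsilon))$, yielding the sharper estimate that the lemma requires.
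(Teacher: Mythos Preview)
Your argument is correct and follows essentially the same sprinkling strategy as the paper: pack many disjoint balls into $V_{N'}$, use the Markov property to obtain independent sub-box GFFs, amplify the hypothesized event by independence, and control the harmonic correction at the randomly selected pair via its variance bound from Lemma~\ref{lem-variance-gff-condition}. The one organizational difference is that the paper handles the two layers of harmonic correction separately --- it first passes from the sub-box GFF $\eta^{(i),*}$ to the ball GFF $\eta^{(i)}$ by a symmetry argument on the centered correction $\phi$ (trading $\epsilon$ for $\epsilon/2$), and only afterwards controls the outer correction $\psi$ by Markov's inequality --- whereas you keep the witness measurable with respect to the sub-box GFFs alone and bound the combined correction $H=\psi_{i^*}+\psi_{i^*}'$ in one shot. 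Your route is slightly cleaner (no factor-of-two loss) but requires the extra observation that $\var\psi_i'=O(1)$, which the paper's ordering avoids; both are equally valid.
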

\begin{proof}
Let $N' = N 2^{k+3}$ with $k=\lceil \log_2 C(\delta,\epsilon)-3\rceil$.
$B_1, \ldots, B_{2^k} \subset V_{N'}$ be disjoint discrete balls of radius $8N$, and for $i\in [2^k]$ let $B^*_i \subset B_i$ be a box of side length $N$ such that these two centers (of the ball and the box) coincide. Let $\{\eta'_v: v\in V_{N'}\}$ be a GFF on $V_{N'}$ with Dirichlet boundary condition, and for $i\in [2^k]$ let $\{\eta^{(i)}_v: v\in B_i\}$ be i.i.d. GFFs on $B_i$ with Dirichlet boundary condition. We first claim that for all $i\in [2^k]$
\begin{equation}\label{eq-enlarge-box}
\P(\exists v, u \in B^*_i:  r \leq |v - u|\leq N/r \mbox{ and } \eta^{(i)}_{u} + \eta^{(i)}_v \geq 2m_N - 2\lambda) \geq \epsilon/2\,.
\end{equation}
In order to prove the preceding inequality, we consider the decomposition of $\{\eta^{(i)}_v: v\in B^*_i\}$ (by conditioning on the values at $\partial B_i^*$ analogous to \eqref{eq-decomposition}) as
$$\eta^{(i)}_v = \eta^{(i), *}_v + \phi_v \mbox{ for all } v\in B^*_i$$
where $\{\eta^{(i), *}_v: v\in B^*_i\}$ is a GFF on $B^*_i$ with Dirichlet boundary condition and is independent of the centered Gaussian process $\{\phi_v: v\in B^*_i\}$. Note that $\phi_v$ here denotes the conditional expectation of $\eta^{i}_v$ given the values on $\partial B^*_i$. Let $\tau_1(i), \tau_2(i) \in B^*_i$ be the
locations of maximizers of
$$\max\{\eta^{(i), *}_v + \eta^{(i), *}_u: u, v \in B_i^*, r \leq |v - u|\leq N/r\}\,.$$
By Assumption \eqref{eq-assumption}, we have
$$\P(\eta^{(i), *}_{\tau_1(i)} + \eta^{(i), *}_{\tau_2(i)}
\geq 2m_N -2\lambda) \geq \epsilon\,.$$
Since $\phi_{\tau_1(i)} + \phi_{\tau_2(i)}$ is a centered Gaussian variable that is independent of $\eta^{(i), *}_{\tau_1(i)} + \eta^{(i), *}_{\tau_2(i)}$, we can deduce \eqref{eq-enlarge-box} as required.

Let us now consider the decomposition for $\{\eta'_v: v\in V_{N'}\}$. We can write
$$\eta'_v = \eta^{(i)}_v + \psi_v \mbox{ for } v\in B^*_i \mbox{ and } i\in [2^k]\,,$$
where $\{\psi_v: v\in B^*_i\}$ is a Gaussian process independent of $\{\eta^{(i)}_v: i\in [2^k], v\in B_i\}$, and furthermore
$$\psi_v = \E(\eta'_v \mid \{\eta'_u: u \in \partial B_i\})\,, \mbox{ for } v\in B^*_i\,.$$
By Lemma~\ref{lem-variance-gff-condition}, we obtain that $\var \psi_v = O(k)$ for all $v\in B^*_i$ and $i\in [2^k]$.

Next, let $\iota \in [2^k]$ be the location of the maximizer of
$$\max\{\eta^{(i)}_{\tau_1(i)} + \eta^{(i)}_{\tau_2(i)}: i\in [2^k]\}\,.$$
By the
independence of $\{\eta^{(i)}_\cdot\}$ for $i\in [2^k]$, we deduce that
$$\P(\eta^{(\iota)}_{\tau_1(\iota)} + \eta^{(\iota)}_{\tau_2(\iota)} \geq 2m_N -2\lambda) \geq 1- (1-\epsilon/2)^{2^k}\,.$$
Conditioning on the location of $\iota$ and $\tau_1(\iota), \tau_2(\iota)$, we see that $\var (\psi_{\tau_1(\iota)} + \psi_{\tau_2(\iota)}) = O(k)$. Therefore,
$$\P(\eta'_{\tau_1(\iota)} + \eta'_{\tau_2(\iota)} \geq 2m_N -2\lambda - \gamma) \geq (1- (1-\epsilon/2)^{2^k}) (1 - \tfrac{O(k)}{\gamma^2})\,,$$
where we simply used Markov's inequality to bound the probability $\P(\psi_{\tau_1(\iota)} + \psi_{\tau_2(\iota)} \geq -\lambda)$.
With our choice of $k,\gamma$, this completes the proof.
\end{proof}

We next bound the lower tail on $\eta^{\diamond}_{N, r}$ from above.
To this end,
we first show that the maximal sum over pairs for the GFF has fluctuation at most $O(\log \log r)$.

\begin{lemma}\label{lem-tight-sum}
  For any $r\leq N$,
  let $\eta^\diamond_{N, r}$ be defined as in \eqref{eq-def-gff-sum}. Then the sequence of random variables $\{(\eta^\diamond_{N, r} - \E \eta^\diamond_{N, r})/\log\log r\}_{N,r}$ is tight along $N \in \mathbb{N}$
and $r\in \{0,\ldots,N\}$.
\end{lemma}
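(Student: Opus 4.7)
My plan is to establish the upper and lower tails of $(\eta^\diamond_{N,r}-\E\eta^\diamond_{N,r})/\log\log r$ separately, using Corollary~\ref{cor-sum-pairs-gff} (which pins down $\E\eta^\diamond_{N,r}$ to within $2m_N-\Theta(\log\log r)$) together with the tightness of $M_N-m_N$ established in \cite{BDZ10,BZ10}.

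\textbf{Upper tail.} I would use the trivial pointwise bound $\eta^\diamond_{N,r}\le 2M_N$. Subtracting $\E\eta^\diamond_{N,r}$ and invoking the lower bound $\E\eta^\diamond_{N,r}\ge 2m_N-c_2\log\log r-C$ from Corollary~\ref{cor-sum-pairs-gff} gives $(\eta^\diamond_{N,r}-\E\eta^\diamond_{N,r})/\log\log r\le 2(M_N-m_N)/\log\log r+c_2+C/\log\log r$, so tightness on this side follows immediately from the tightness of $M_N-m_N$ (assuming $r$ large enough that $\log\log r\ge 1$).

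\textbf{Lower tail.} Here I run a first-moment contradiction. Assume $\P(\eta^\diamond_{N,r}-\E\eta^\diamond_{N,r}<-K\log\log r)\ge\epsilon$. Combined with the matching upper bound $\E\eta^\diamond_{N,r}\le 2m_N-c_1\log\log r+C$ from Corollary~\ref{cor-sum-pairs-gff}, this event is contained in $\{2m_N-\eta^\diamond_{N,r}>(K+c_1)\log\log r-C\}$, which also has probability at least $\epsilon$. Setting $Y=2m_N-\eta^\diamond_{N,r}$ and decomposing $\E Y=\E Y^+-\E Y^-$: the positive part satisfies $\E Y^+\ge\epsilon((K+c_1)\log\log r-C)$ from the assumed event, while $Y^-=(\eta^\diamond_{N,r}-2m_N)^+\le 2(M_N-m_N)^+$ gives $\E Y^-\le 2\E(M_N-m_N)^+$. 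Combining with the Corollary's lower bound $\E Y\le c_2\log\log r+C$ forces $\epsilon(K+c_1)\log\log r\le c_2\log\log r+O_\epsilon(1)$, i.e.\ $K\le K_0/\epsilon$ for a universal constant $K_0$. Contrapositively, the choice $K=K_0/\epsilon$ gives the required lower-tail bound.

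\textbf{Main obstacle and loose ends.} The one technical point to verify is the uniform-in-$N$ bound $\E(M_N-m_N)^+=O(1)$. I would split $\int_0^\infty\P(M_N>m_N+s)\,ds$ into $s\le(\log N)^{2/3}$, controlled by the exponential upper-tail estimate in \eqref{eq-concentration}, and $s>(\log N)^{2/3}$, controlled by Borell--TIS with $\sigma_*^2=\max_v\var\eta_v=O(\log N)$, which yields a contribution that vanishes as $N\to\infty$. For small $r$ where $\log\log r$ is not bounded below by a positive constant, the upper-tail argument already shows that $\eta^\diamond_{N,r}-\E\eta^\diamond_{N,r}$ differs from $2(M_N-m_N)$ by only an absolute constant, so tightness is inherited directly from the tightness of $M_N-m_N$.
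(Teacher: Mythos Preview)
Your argument is correct and takes a genuinely different route from the paper's. The paper proves the lemma by a self-comparison (doubling) trick: embedding two disjoint copies of $V_N$ inside $V_{2N}$ and using the GFF's domain Markov property, it shows $\E\eta^\diamond_{2N,r}\ge \E\max(Z_1,Z_2)$ for two i.i.d.\ copies $Z_i$ of $\eta^\diamond_{N,r}$; the identity $\max(a,b)=\tfrac12(a+b+|a-b|)$ then gives $\E|Z_1-Z_2|\le 2(\E\eta^\diamond_{2N,r}-\E\eta^\diamond_{N,r})=O(\log\log r)$ via Corollary~\ref{cor-sum-pairs-gff}, and tightness follows by Markov. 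This argument is entirely internal: it uses only Corollary~\ref{cor-sum-pairs-gff} and the structure of the GFF, and yields the $L^1$ bound $\E|\eta^\diamond_{N,r}-\E\eta^\diamond_{N,r}|=O(\log\log r)$, slightly stronger than tightness. Your approach instead leverages the already-known tightness and integrability of $M_N-m_N$; it is arguably more transparent but imports more machinery (the results of \cite{BZ10} and the tail bound \eqref{eq-concentration}, plus Borell--TIS to close the range $\lambda>(\log N)^{2/3}$).

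One small remark on your last paragraph: the claim that for small $r$ the quantity $\eta^\diamond_{N,r}-\E\eta^\diamond_{N,r}$ differs from $2(M_N-m_N)$ by $O(1)$ is only justified in one direction (you have $\eta^\diamond_{N,r}\le 2M_N$ but no matching lower bound). Fortunately this case is vacuous: since $r$ is a positive integer and the statement is only meaningful for $r\ge 3$, one always has $\log\log r\ge \log\log 3>0$, so your main lower-tail contradiction already covers all admissible $r$ once the constant $K_0$ is chosen to absorb the factor $1/\log\log 3$.
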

\begin{proof}
To simplify
notation, we consider the sequence $N = 2^n$ in the proof (the tightness of the full sequence will follow from the same proof with slight modification by considering $n(N) = \max\{k\in \mathbb{N}: 2^k \leq N\}$). To this end, we first claim that
\begin{equation}\label{eq-for-tightness}
\E \eta^\diamond_{2N, r} \geq \E \max\{Z_1, Z_2\}\,,
\end{equation}
where $Z_1, Z_2\sim \eta^\diamond_{N, r}$ and $Z_1$ is independent of $Z_2$. The proof of \eqref{eq-for-tightness} follows from the similar argument as in the proof of Lemma~\ref{lem-monotone}, as we sketch briefly in what follows. Consider $V_N, V'_N \subset V_{2N}$ where $V_N$ and $V'_N$ are two disjoint boxes of side length $N$. Using a similar decomposition as in \eqref{eq-decomposition}, we can write $\eta^{2N}_v = \eta^N_v + \phi_v$ for $v\in V_N$ and $\eta^{2N}_v = \hat{\eta}^N_v + \phi_v$ for $v\in V'_N$, where $\eta^N_\cdot$ and $\hat{\eta}^N_\cdot$ are two independent copies of GFF in a 2D box of side length $N$. This yields \eqref{eq-for-tightness}. Now using the equality $a \vee b = \frac{a+b + |a - b|}{2}$, we deduce that
$$\E |Z_1 - Z_2| \leq 2(\E \eta^\diamond_{2N, r} -
\E Z_1)\leq 2C \log\log r\,,$$
where the last inequality follows from Corollary~\ref{cor-sum-pairs-gff}. This completes the proof of the lemma.
\end{proof}
Based on the preceding lemma, we prove a stronger result which will also imply that the number of point whose values in the GFF exceed $m_N - \lambda$ grows at least exponentially in $\lambda$. We will follow the proof for the upper bound on the lower tail of the maximum of GFF in \cite[Sec. 2.4]{Ding11}. For $N, r\in \mathbb{N}$, define
$$\Xi_{N, r} = \{(u, v)\in V_N \times V_N:  r\leq |u-v|\leq N/r\}\,.$$
\begin{lemma}\label{lem-lower-tail-gamma}
There exists absolute constants $C, c>0$ such that for all $N\in \mathbb{N}$ and $r, \lambda \geq C$
$$\P(\exists A \subset \Xi_{N, r} \mbox{ with }|A| \geq \log r: \forall (u, v)\in A:  \eta_u + \eta_v \geq 2m_N - 2\lambda \log\log r) \geq 1 - C\mathrm{e}^{-\mathrm{e}^{c\lambda \log\log r}} \,.$$
\end{lemma}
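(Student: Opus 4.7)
The plan is to adapt the sprinkling argument of \cite[Sec.~2.4]{Ding11}, originally designed to prove the doubly-exponential left tail of $M_N$ in \eqref{eq-concentration}, so as to produce many simultaneous near-maximal pairs rather than a single near-maximum. The starting point is the base estimate obtained by combining Corollary~\ref{cor-sum-pairs-gff} and the tightness statement in Lemma~\ref{lem-tight-sum}: there exists a universal constant $K_0$ such that $\P(\eta^\diamond_{N_0,r_0}\geq 2m_{N_0}-K_0\log\log r_0)\geq 1/2$ for all $N_0\geq r_0$. The task of the rest of the argument is to amplify this ``one pair, constant probability'' statement into ``at least $\log r$ pairs, with doubly-exponentially small failure probability''.

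Set $k=\lceil c_0 \lambda\log\log r\rceil$ for a small absolute constant $c_0>0$ (assuming $\lambda$ is small enough that $N/2^k\geq r$; for larger $\lambda$ the claimed inequality becomes trivial since its right-hand side is essentially $0$ and the event is forced). Partition $V_N$ into $M=4^k$ disjoint sub-boxes $B_1,\ldots,B_M$ of side $N/2^k$, and apply the GFF Markov property to write $\eta_v=\eta^{(i)}_v+\phi_v$ for $v\in B_i$, where $\{\eta^{(i)}\}_{i=1}^M$ are independent GFFs on the $B_i$'s with Dirichlet boundary, $\phi_v=\E(\eta_v\mid \eta|_{\partial B_i})$ is the boundary-harmonic extension with $\var(\phi_v)=O(k)$ (as in Lemma~\ref{lem-variance-gff-condition}), and $\phi$ is independent of $\{\eta^{(i)}\}$. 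The base estimate applied to each $\eta^{(i)}$ yields, with probability at least $1/2$, a pair $(u_i,v_i)\in\Xi_{N,r}$ satisfying $r\leq |u_i-v_i|\leq (N/2^k)/r$ and $\eta^{(i)}_{u_i}+\eta^{(i)}_{v_i}\geq 2m_{N/2^k}-K_0\log\log r$; call such an index $i$ \emph{$\eta$-good}. By independence of the $\eta^{(i)}$'s and Chernoff's inequality, there are at least $M/3$ $\eta$-good indices with probability at least $1-e^{-M/36}=1-\exp(-4^k/36)$, which is already of the required doubly-exponential form $1-\exp(-e^{\Theta(\lambda\log\log r)})$.

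It then remains to show that at least $\log r$ of the $\eta$-good indices further satisfy $\phi_{u_i}+\phi_{v_i}\geq -C_1\lambda\log\log r$; combined with $m_N-m_{N/2^k}=O(k)$ and taking $c_0$ small enough, this delivers $\eta_{u_i}+\eta_{v_i}\geq 2m_N-2\lambda\log\log r$ for those indices, yielding the desired set $A$. Conditioning on $\{\eta^{(i)}\}$ freezes the locations $(u_i,v_i)$, after which $\{\phi_{u_i}+\phi_{v_i}\}_{i=1}^M$ is a centered, correlated Gaussian vector of pointwise variance $O(k)$. A first-moment estimate shows that, once $C_1$ is chosen large enough, the expected fraction of indices with $\phi_{u_i}+\phi_{v_i}<-C_1 k/c_0$ is $\exp(-\Omega(k))$.

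The main technical hurdle is to upgrade this first-moment control into genuine doubly-exponential concentration on the number of ``$\phi$-bad'' indices: a direct Borell-type inequality applied to the correlated field $\phi$ only yields singly-exponential decay of order $\exp(-c\lambda\log\log r)$. Following Ding's strategy, the plan is to exploit the multi-scale structure of $\phi$ by decomposing it into levelwise contributions that mirror the MBRW construction of Section~\ref{sec:comparison}, so that at each of the $O(k)$ scales the contribution is approximately independent across sub-boxes at that scale; a Chernoff bound per level together with a union bound over the $O(k)$ levels then gives the needed $\exp(-\exp(c\lambda\log\log r))$ control on the count of $\phi$-bad sub-boxes. Combining this with the Chernoff estimate for $\eta$-good sub-boxes produces the claimed doubly-exponential tail.
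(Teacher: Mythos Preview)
Your first two steps---decomposing the GFF into independent sub-box fields $\eta^{(i)}$ plus a harmonic correction $\phi$, invoking the base estimate from Corollary~\ref{cor-sum-pairs-gff} and Lemma~\ref{lem-tight-sum} to get a good pair in each box with constant probability, and then Chernoff on the number of $\eta$-good boxes---match the paper's strategy. The genuine gap is in your treatment of $\phi$, and it is not a detail: the paper handles this step by a completely different mechanism than the one you sketch.

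The paper does \emph{not} attempt a level-by-level MBRW-type decomposition of $\phi$. Instead it engineers the geometry so that $\phi$ is nearly decorrelated across sub-boxes from the start. Concretely, the sub-boxes $B_i$ (side $R/8$) sit inside balls $\mathcal{C}_i$ of radius $R$ whose centers lie along a single horizontal line with spacing $\ell$, where $R=N(\log r)^{-\lambda/10}$ and $\ell=N(\log r)^{-\lambda/100}$, so that $R/\ell=(\log r)^{-9\lambda/100}$ is tiny. This buys two things. First, since each $B_i$ is well inside its ball, Lemma~\ref{lem-variance-gff-condition} gives $\var\phi_v=O(\lambda\log\log r)$ \emph{uniformly} over $v\in B_i$; in your full grid partition the maximizing pair can sit near $\partial B_i$, where $\var\phi_v$ is of order $\log N$, so even your invocation of Lemma~\ref{lem-variance-gff-condition} is not justified as stated. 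Second, and this is the key idea you are missing, the separation $\ell\gg R$ forces the correlation coefficient of $\phi$ between distinct boxes to be $O(\lambda\log\log r\,\sqrt{R/\ell})\ll 1$; one then applies Slepian's inequality to compare $\{\phi_{u_i}+\phi_{v_i}\}_i$ to essentially independent centered Gaussians and concludes (this is the content of \cite[Lemma~2.3]{Ding11}, quoted in the paper as Lemma~\ref{lem-positive-in-U}) that within a block of $m=(\log r)^{\lambda/200}$ boxes, the probability that \emph{all} of them have $\phi_{u_i}+\phi_{v_i}\leq 0$ is at most $C\exp(-c(\log r)^{c\lambda})$. A union bound over the $M/m\geq \log r$ blocks then furnishes the set $A$.

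Your proposed alternative does not obviously work: the harmonic extension $\phi$ carries no intrinsic hierarchical structure, and in a contiguous grid partition neighboring boxes share boundary, so the values $\phi_{u_i}+\phi_{v_i}$ for adjacent $i$ are strongly (not weakly) correlated. Without the separation device you have no mechanism for converting a per-index Gaussian tail into a doubly-exponential bound on the count of $\phi$-bad indices; a ``Chernoff per level plus union bound over levels'' argument would require approximate independence at each scale, which you have not established and which is not available for $\phi$ in this geometry.
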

\begin{proof}
The proof idea is similar to
\cite{Ding11}, and thus we will be brief in what follows.
Denote by $R = N (\log r)^{-\lambda/10}$ and $\ell = N (\log r)^{-\lambda/100}$.
 Assume that the left bottom corner of $V_N$ is the origin $o= (0, 0)$. Define $o_i =
(i\ell, 2R)$ for $1\leq i \leq M = \lfloor N/2\ell \rfloor = (\log r)^{\lambda/100}/2$. Let
$\mathcal{C}_i$ be a discrete ball of radius $r$ centered at $o_i$
and let $B_i \subset \mathcal{C}_i$ be a box of side length $R/8$
centered at $o_i$.
We next regroup the $M$ boxes into $m$ blocks. Let $m = (\log r)^{\lambda/200}$, and let $\mathfrak{C}_j = \{\mathcal{C}_i: (j-1)m < i <
jm\}$ and $\mathcal{B}_j = \{B_i: (j-1)m <  i < jm\}$ for $j=1, \ldots, M/m$.

Now we consider the maximal sum over pairs of the GFF in each $\mathcal{B}_j$. For
ease of notation, we fix $j=1$ and write $\mathcal{B} = \mathcal{B}_1$ and $\mathfrak{C} = \mathfrak{C}_1$. For each $B \in \mathcal{B}$, analogous to
\eqref{eq-decomposition}, we can write
$$\eta_v = g_v^B + \phi_v \mbox{ for all } v\in B \subseteq \mathcal{C} \in \mathfrak{C}\,,$$
where $\{g_v^B: v\in B\}$ is the projection of the GFF on
$\mathcal{C}$ with Dirichlet boundary condition on $\partial
\mathcal{C}$, and $\{\{g_v^B: v\in B\} :  B\in \mathcal{B}\}$ are
independent of each other and of $\{\eta_v : v\in
\partial \mathfrak{C}\}$, and $\phi_v = \E(\eta_v \mid \{\eta_u: u\in
\partial \mathfrak{C}\})$ is a convex combination of $\{\eta_u: u\in
\partial \mathfrak{C}\}$. For every $B\in \mathcal{B}$, define
$(\chi_{1,B}, \chi_{2, B}) \in B\times B \cap \Xi_{N, r}$ such that
$$g_{1, \chi_B}^B  + g_{\chi_{2, B}}^B = \sup_{u,v\in B\times B \cap \Xi_{N, r}}g_v^B + g_u^B\,.$$
Since $\lambda$ is large enough, we get from Corollary~\ref{cor-sum-pairs-gff} and Lemma~\ref{lem-tight-sum}
that
$$\P(g_{1, \chi_B}^B  + g_{\chi_{2, B}}^B \geq 2m_N - \lambda \log\log r) \geq 1/4\,.$$
Let $W = \{(\chi_{1,B}, \chi_{2, B}): g_{1, \chi_B}^B  + g_{\chi_{2, B}}^B \geq 2m_N - \lambda \log\log r, B\in
\mathcal{B}\}$. By independence, a standard concentration argument
gives that for an absolute constant $c> 0$
\begin{equation}\label{eq-W}
\P(W \leq \tfrac{1}{8} m) \leq \mathrm{e}^{-c m}\,.\end{equation}

It remains to study the process $\{\phi_u + \phi_v: (u,v)\in W\}$. If $\phi_u + \phi_v \geq 0$ for $(u,v)\in W$ ,  we have $\eta_u+\eta_v
 \geq 2 m_N - \lambda \log\log r$. The required estimate is summarized in the following lemma.
\begin{lemma}\cite[Lemma 2.3]{Ding11}\label{lem-positive-in-U}
Let $U\subset \cup_{B\in \mathcal{B}} B\times B$ such that $|U\cap B\times B| \leq
1$ for all $B\in \mathcal{B}$. Assume that $|U| \geq m/8$. Then, for
some absolute constants $C, c>0$
$$\P(\phi_u + \phi_v \leq 0 \mbox{ for all } (u,v)
\in U) \leq C \mathrm{e}^{-c (\log r)^{c \lambda}}\,.$$
\end{lemma}
Despite the fact that we are considering a sum over a pairs (instead of a single value $\phi_v$) in the current setting as well as slightly different choices of parameters, the proof of the preceding lemma goes exactly the same as that in \cite{Ding11}. The main idea is to control the correlations among $(\phi_u+\phi_v)$ for $(u, v)\in U$. Indeed, one can
show that the correlation coefficient is uniformly bounded by $O(\lambda \log\log r\sqrt{R/\ell})$. Slepian's comparison theorem can then be invoked
to complete the proof. Due to the similarity, we do
not reproduce the proof here.

Altogether, the preceding lemma implies that
$$\P(\mbox{$\max_{B\in \mathcal{B}}\max_{v, v\in B\times B \cap \Xi_{N, r}}$}
\eta_u + \eta_v \geq 2m_N - 2\lambda
\log \log r) \geq 1 - C \mathrm{e}^{-c (\log r)^{c \lambda}}\,.$$
Now, let $(\chi_{1, j}, \chi_{2, j}) \in \mathcal{B}_j \times \mathcal{B}_j \cap \Xi_{N, r}$ be such that
$$\eta_{\chi_{1, j}} + \eta_{\chi_{2, j}} = \max_{B\in \mathcal{B}_j} \max_{(u, v)\in B\times B \cap \Xi_{N, r}} \eta_u +  \eta_v\,,$$
and let $A = \{(\chi_{1, j}, \chi_{2, j}): 1\leq j\leq M/m\}$. A union bound gives that $\min_{(u,v)\in A}\eta_u+
\eta_v\geq 2m_N -2 \lambda \log\log r$ with probability at least
$1-C \mathrm{e}^{-c (\log r)^{c \lambda}}$, concluding the proof.
\end{proof}
The following is an immediate corollary of the preceding lemma.
\begin{cor}\label{cor-lower-sum}
There exist absolute constants $C, c>0$ such that for all
$N\in \mathbb{N}$ and $\lambda, r\geq C$
$$\P( \eta^{\diamond}_{N, r} \geq 2m_N - 2\lambda \log \log r) \geq 1 - C\mathrm{e}^{-c\mathrm{e}^{c\lambda \log\log r}}\,.$$
\end{cor}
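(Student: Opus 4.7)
The plan is to observe that this corollary follows immediately, with essentially no extra work, from Lemma~\ref{lem-lower-tail-gamma}. Recall the definition of $\eta^\diamond_{N,r}$ from \eqref{eq-def-gff-sum}: it is the maximum of $\eta^N_u+\eta^N_v$ taken over exactly the set $\Xi_{N,r}$ of pairs $(u,v)\in V_N\times V_N$ with $r\le |u-v|\le N/r$. Therefore, the very existence of even a single pair $(u,v)\in \Xi_{N,r}$ with $\eta_u+\eta_v\ge 2m_N-2\lambda\log\log r$ already forces $\eta^\diamond_{N,r}\ge 2m_N-2\lambda\log\log r$.

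Consequently, the event considered in Lemma~\ref{lem-lower-tail-gamma} — that there exists a set $A\subset \Xi_{N,r}$ with $|A|\ge \log r$ on which $\eta_u+\eta_v\ge 2m_N-2\lambda\log\log r$ for all $(u,v)\in A$ — is a \emph{subset} of the event $\{\eta^\diamond_{N,r}\ge 2m_N-2\lambda\log\log r\}$. Monotonicity of probability then yields
\[
\P\!\left(\eta^\diamond_{N,r}\ge 2m_N-2\lambda\log\log r\right)\ge 1-C\mathrm{e}^{-\mathrm{e}^{c\lambda\log\log r}},
\]
which is precisely the desired bound (after, if one wishes, absorbing the trivial inner constant $1$ into the stated $c$). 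There is no genuine obstacle here: the only nontrivial content — the cardinality-$\log r$ clustering of good pairs used to pay for the sprinkling argument — was already absorbed into Lemma~\ref{lem-lower-tail-gamma}, and for the corollary we simply discard that extra structure and keep only one pair. Thus the proof consists of a single sentence pointing out that the event in Lemma~\ref{lem-lower-tail-gamma} is contained in the event in the corollary.
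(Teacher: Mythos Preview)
Your argument is correct and is exactly the paper's approach: the paper simply declares the statement an ``immediate corollary'' of Lemma~\ref{lem-lower-tail-gamma}, and the containment of events you spell out is precisely the one-line reason behind that remark.
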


We are now ready to give

\noindent{\bf Proof of Theorem~\ref{thm-location}.} Suppose that the conclusion in the theorem does not hold. This implies that for a particular choice of $c = c_1/8$ (where $c_1$ is the constant in Corollary~\ref{cor-sum-pairs-gff})
there exists $\epsilon>0$ and a subsequence
$\{r_k\}$ with $r_k\to_{k\to\infty} \infty$ such that  for all $k$
$$\limsup_{N\to \infty} \P(\exists v, u \in V_N:  r_k \leq |v - u|\leq N/r_k \mbox{ and } \eta_{u}, \eta_v \geq m_N - c\log \log r_k)\geq \epsilon\,.$$
Then by Lemma~\ref{lem-sprinkling}, for a $\delta>0$ to be specified and $C(\epsilon, \delta)>0$, we have
$$\limsup_{N\to \infty} \P(\exists v, u \in V_{C(\epsilon, \delta)N}:  r_k \leq |v - u|\leq C(\epsilon, \delta)N/r_k\,, \eta_{u} + \eta_v \geq 2m_N - 2c \log\log r_k - C(\epsilon, \delta))\geq 1-\delta\,.$$
Now we consider random variables $W_{N, k} = 2m_{N} - 2c\log\log r_k - C(\epsilon, \delta) - \eta^{\diamond}_{C(\epsilon, \delta)N, r_k} $. By the preceding inequality, for any $\delta>0$ there exists an integer $N_\delta$ such that $\P(W_{N, k} \geq 0) \leq 2\delta$ for all $N\geq N_\delta$ and $k\in \mathbb{N}$.
By Corollary~\ref{cor-lower-sum},
we see that for absolute constants $C^\star, c^\star>0$
$$\P(W_{N, k} \geq \lambda \log\log r_k )\leq C^\star \mathrm{e}^{-c^\star\mathrm{e}^{c^\star (\lambda - 2c) \log\log r_k}}, \mbox{ for all }
N, k, \lambda\geq C^\star\,.$$
Therefore, for $N\geq N_\delta$ and $r_k\geq \mathrm{e}^\mathrm{e}\vee C^\star$, we obtain that
\begin{align*}
\E W_{N, k} &\leq \log\log r_k \int_0^\infty \P(W_{N, k} \geq \lambda \log\log r_k) d\lambda \leq \log\log r_k \int_0^\infty (2\delta) \wedge  (C^\star \mathrm{e}^{-c^\star\mathrm{e}^{c^\star (\lambda - 2c) \log\log r_k}}) d\lambda\\
&\leq   A_{c, C^\star}\delta\log\log r_k + \log\log r_k \int_0^\infty (2\delta) \wedge  (C^\star \mathrm{e}^{-c^\star\mathrm{e}^{c^\star \lambda}}) d\lambda \leq A_{c, C^\star, c^\star} \delta \log\log r_k\,,
\end{align*}
where $A_{c, C^\star}>0$ is a number depending on $(c, C^\star)$ and $A_{c, C^\star, c^\star} > 0$ is a number that depends only on $(c, C^\star, c^\star)$.  Recalling that $c = c_1/8$ and choosing $\delta = c_1/4A_{c, C^\star, c^\star}$, we
then get that for $N\geq N_\delta$ and $r_k\geq \mathrm{e}^\mathrm{e}$,
\begin{equation}\label{eq-proof-thm-1}
\E \eta^{\diamond}_{C(\epsilon, \delta)N_j, r_k} \geq 2m_{N_j} - \frac{c_1}{2}\log\log r_k - C(\epsilon, \delta) \mbox{ for all } k\in \mathbb{N}\,.\end{equation}
This contradicts with Corollary~\ref{cor-sum-pairs-gff} (sending $k\to \infty$), thereby completing the proof. \qed

\medskip

We conclude this subsection by providing

\noindent{\bf Proof of Theorem~\ref{thm-exponential-growth}.} The lower bound on $A_{\lambda, N}$ follows immediately from Lemma~\ref{lem-lower-tail-gamma}. A straightforward deduction from Theorem~\ref{thm-location} together with a packing argument yields an upper bound of merely doubly-exponential on $A_{\lambda, N}$. In what follows, we strengthen the upper bound to exponential of $\lambda$. Continue denoting $\mathcal{S}_{\ell, N}$ and $\mathcal{R}_{\ell, N}$ as in Lemma~\ref{lem-compare-3}. Following notation as in Section~\ref{sec:sumpairMBRW}, we see that
$$\mathcal{R}_{\ell, N} \leq \ell T_N - \tfrac{\ell}{4 c^*}\one\{|\Xi^*_{N,\, \log \ell/(2c^*)}| \leq \ell/2\} \log \ell \,,$$
where $\Xi^*_{N, x} = \bigcup_{i=t_N - T_N}^{x} \Xi_{N}(i)$. Applying \eqref{eq-of2} and \eqref{eq-of3}, we deduce that there exists a constant $c>0$ such that for sufficiently large $\ell$
$$\E \mathcal{R}_{\ell, N} \leq \ell(\sqrt{2\log 2/\pi}m_N - c \log \ell)\,.$$
Combined with Lemma~\ref{lem-compare-3}, it follows that for sufficiently large $\ell$
\begin{equation}\label{eq-thm-2-S}
\E \mathcal{S}_{\ell, N} \leq \ell(\sqrt{2\log 2/\pi}m_N - c \log \ell)\,.
\end{equation}
At this point, the proof can be completed analogous to the deduction of \eqref{eq-proof-thm-1}, as we sketch below. Suppose otherwise that
for any $\alpha>0$ there exists a subsequence $\{r_k\}$ such that for all $k$ there exists a subsequence $N_{k, i}$ with
$$ \P(|A_{N_{k, i}, r_k} | \geq \mathrm{e}^{\alpha r_k})\geq \epsilon\,, \mbox{ for all } i\in \mathbb{N}\,,$$
where $\epsilon>0$ is a positive constant. Then, following the same sprinkling idea in Lemma~\ref{lem-sprinkling}, we can show that for any $\delta>0$, there exists $C(\epsilon, \delta)$ such that
for $N'_{k, i}=C(\delta, \epsilon)N_{k, i}$ and $\gamma = \gamma(\epsilon, \delta)$, the following holds
$$\P( |A_{N'_{k, i}, r_k - \gamma}| \geq \mathrm{e}^{\alpha r_k}) \geq 1 - \delta\,.$$
Combined with Lemma~\ref{lem-lower-tail-gamma}, it follows that
$$\E \mathcal{S}_{\mathrm{e}^{\alpha r_k}, N'_{k, i}} \geq \mathrm{e}^{\alpha r_k}(\sqrt{2\log 2/\pi}m_{N'_{k, i}} - (1+c'\delta \alpha) r_k - \gamma )\,,$$
where $c'>0$ is a constant that arise from the estimate in Lemma~\ref{lem-lower-tail-gamma}. Now, setting $\delta = (c/2c')$, $\alpha = 4/c$ and sending $r_k \to \infty$, we obtain a bound that contradicting with \eqref{eq-thm-2-S}, completing the proof of Theorem~\ref{thm-exponential-growth}.

\subsection{The gap between the largest two values in Gaussian free field}

In this subsection, we study the gap between the largest two values and prove Theorem~\ref{thm-gap}.

\noindent{\bf Upper bound on the right tail.} In order to show the upper bound in \eqref{eq-gap-gaussian}, it suffices to
prove that for some absolute constants $C, c>0$ and all $\lambda>0$
\begin{equation}\label{eq-gap-upper}
\P(\lambda < \Gamma_N \leq \lambda+1) \leq \P(\Gamma_N \leq 1) \cdot
C\mathrm{e}^{-c\lambda^2}\,.
\end{equation}
To this end,  define
$$ \Omega_\lambda = \{(x_v)_{v\in V_N}: \gamma((x_v)) \in (\lambda, \lambda+1]\} \mbox{ for all } \lambda \geq 0\,,$$
where $\gamma((x_v))$ is defined to be the gap between the largest two values in $\{x_v\}$.
For $(x_v)_{v\in V_N} \in \Omega_\lambda$, let
$\tau\in V_N$ be such that $x_\tau = \max_{v\in V_N} x_v$. We
construct a mapping $\phi_\lambda: \Omega_\lambda \mapsto \Omega_0$
that maps $(x_v)_{v\in V_N} \in \Omega_\lambda$ to $(y_v)_{v\in
V_N}$ such that
$$y_v = x_v \mbox{ if } v\neq \tau\,, \mbox{ and }  y_\tau = x_\tau - \lambda\,.$$
It is clear that the mapping is 1-1 and $(y_v)_{v\in V_N} \in
\Omega_0$. Furthermore, the Jacobian of the mapping $\phi_\lambda$
is precisely 1 on $\Omega_\lambda$. It remains to estimate
the density ratio $f((x_v))/f((y_v))$. Using \eqref{eq-density}, we get that
\begin{align*}
f((x_v)) = Z \mathrm{e}^{-\frac{1}{16}\sum_{u\sim v} (x_u - x_v)^2}
= Z \mathrm{e}^{-\frac{1}{16}\sum_{u\sim v} (y_u - y_v)^2}
\mathrm{e}^{-\frac{1}{8}\sum_{u\sim \tau}((x_u - x_\tau)^2 - (y_u -
y_\tau)^2)}\leq f((y_v)) \mathrm{e}^{-\frac{1}{2}\lambda^2}\,.
\end{align*}
It then follows that
$$\P((\eta^N_v)\in \Omega_\lambda) \leq \mathrm{e}^{-\lambda^2/2} \P((\eta^N_v)\in \Omega_0)\,,$$
completing the proof of \eqref{eq-gap-upper}.

\smallskip
\noindent{\bf Lower bound on the right tail.} In order to prove the lower bound on the right tail for the gap, we first show that with positive
probability
there exists a vertex such that all its neighbors in the GFF take values close to $m_N$ within a constant window. To this end, we consider a new Gaussian process $\{\zeta_v: v\in V_N\}$ defined by
\begin{equation}\label{eq-def-zeta}
\zeta_v = \tfrac{1}{4}\mbox{$\sum_{u\sim v}$} \eta_v \mbox{ for } v\in V_N\setminus \partial V_N\,, \mbox{ and } \zeta|_{\partial V} = 0\,.
\end{equation}
In addition, we denote by $V_N^{e}$ and $V_N^o$ the collection of even and odd vertices in $V_N$, respectively. Note that $V_N = V_N^e \cup V_n^o$.
\begin{lemma}\label{lem-max-zeta}
For every $\epsilon>0$, there exists a constant $C_\epsilon>0$ such that
$$\P(\mbox{$\max_{v\in V_N^e}$} \zeta_v  \geq m_N - C_\epsilon) \geq 1 - \epsilon\,.$$
\end{lemma}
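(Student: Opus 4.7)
The plan is to show that $\zeta$, viewed as a Gaussian field on the even sublattice $V_N^e$, has essentially the same logarithmic covariance structure as $\eta$ itself, so it suffices to compare its maximum with that of a GFF on a sub-box via Slepian's lemma together with variance matching by independent noise, in the spirit of Lemma~\ref{lem-compare-2}.

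The starting point is a decomposition coming from the Markov property of the GFF: for each interior $v$, $\eta_v$ conditioned on $\{\eta_u: u\neq v\}$ is $N(\zeta_v,1)$, so $W_v:=\eta_v-\zeta_v$ is $N(0,1)$ and independent of $\sigma(\{\eta_u: u\neq v\})$. Since distinct vertices of $V_N^e$ are never adjacent, $\zeta_{v'}$ for $v'\neq v$ in $V_N^e$ is measurable with respect to this sigma-algebra, from which a short pairwise-independence argument yields that $\{W_v: v\in V_N^e\}$ are i.i.d.\ $N(0,1)$ and independent of $\{\zeta_v: v\in V_N^e\}$. In particular, for distinct $v,w\in V_N^e$,
\begin{equation*}
\var(\zeta_v)=\var(\eta_v)-1,\quad \cov(\zeta_v,\zeta_w)=\cov(\eta_v,\eta_w),\quad \E(\zeta_v-\zeta_w)^2=\E(\eta_v-\eta_w)^2-2.
\end{equation*}

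I would then fix a large constant $K$ (a power of $2$, say) and an embedding $\phi:V_{N/K}\to V_N^e$, e.g.\ $\phi(v)=(N/4,N/4)+Kv$, placing $\phi(V_{N/K})$ in the deep interior of $V_N^e$. By Lemma~\ref{lem-covariance} combined with the decomposition above, for $K$ large enough $\var(\zeta_{\phi(v)})>\var(\eta^{N/K}_v)$, and one can choose positive constants $a_v$ with $\var(\eta^{N/K}_v+a_vZ)=\var(\zeta_{\phi(v)})$, where $Z\sim N(0,1)$ is independent of all else. These $a_v$ are uniformly bounded above and below, with $|a_v-a_w|$ controlled by the $O(1)$ fluctuation of interior variances. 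The crucial pairwise-distance comparison uses that $|\phi(v)-\phi(w)|=K|v-w|$ produces an extra $\Theta(\log K)$ term in $\E(\zeta_{\phi(v)}-\zeta_{\phi(w)})^2$ relative to $\E(\eta^{N/K}_v-\eta^{N/K}_w)^2$; for $K$ large this dominates both the $-2$ correction from the diagonal and the bounded $(a_v-a_w)^2$, verifying the Slepian hypothesis. Lemma~\ref{lem-slepian} then yields, for every $\lambda$,
\begin{equation*}
\P(\mbox{$\max_{v\in V_N^e}$}\zeta_v\geq\lambda)\geq \P(\mbox{$\max_{v\in V_{N/K}}$}(\eta^{N/K}_v+a_vZ)\geq\lambda).
\end{equation*}

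To conclude, given $\epsilon>0$ choose $M$ with $\P(Z\geq -M)\geq 1-\epsilon/2$ and, by uniform tightness (in $N$) of $\max\eta^{N/K}-m_{N/K}$ from \cite{BZ10}, choose $C'$ with $\P(\max\eta^{N/K}\geq m_{N/K}-C')\geq 1-\epsilon/2$. On $\{Z\geq -M\}$ we have $\max_v(\eta^{N/K}_v+a_vZ)\geq\max_v\eta^{N/K}_v-a_{\max}M$, so independence of $Z$ from $\eta^{N/K}$ gives $\P(\max(\eta^{N/K}_v+a_vZ)\geq\lambda)\geq\P(Z\geq -M)\cdot\P(\max\eta^{N/K}\geq\lambda+a_{\max}M)$. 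Setting $C_\epsilon:=(m_N-m_{N/K})+a_{\max}M+C'$, a finite constant depending only on $\epsilon$ since $K$ is fixed, we obtain $\P(\max_{V_N^e}\zeta_v\geq m_N-C_\epsilon)\geq(1-\epsilon/2)^2\geq 1-\epsilon$. The main technical hurdle is the bookkeeping in the Slepian step: the $-2$ correction from the diagonal and the bounded $(a_v-a_w)^2$ noise term must be jointly absorbed into the $\Theta(\log K)$ excess produced by the embedding, which forces $K$ to be sufficiently large; this mirrors the calculation in Lemma~\ref{lem-compare-2} once the covariance computation for $\zeta$ is set up.
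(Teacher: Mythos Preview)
Your decomposition $\eta_v=\zeta_v+W_v$ via the Markov property is correct and gives the sharp constant $-2$ in the increment comparison; this is actually cleaner than the paper's $O(1)$ estimate. However, there is a genuine gap in the Slepian step. You assert that the $a_v$ are uniformly bounded with $|a_v-a_w|=O(1)$, but this fails: since $a_v^2=\var(\zeta_{\phi(v)})-\var(\eta^{N/K}_v)$, and $\var(\eta^{N/K}_v)\to 0$ as $v$ approaches $\partial V_{N/K}$ while $\var(\zeta_{\phi(v)})$ stays near $\tfrac{2\log 2}{\pi}\log_2 N$, one gets $a_v\asymp\sqrt{\log N}$ for boundary $v$. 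This breaks both the $(a_v-a_w)^2$ bookkeeping in the increment comparison and, more seriously, the final step, where your $C_\epsilon=(m_N-m_{N/K})+a_{\max}M+C'$ would then depend on $N$. The fix is routine: restrict the comparison to the central half of $V_{N/K}$, where Lemma~\ref{lem-covariance} does give uniform $O(1)$ variance fluctuation, and invoke tightness on that sub-box. Your embedding $\phi(v)=(N/4,N/4)+Kv$ also overruns $V_N$ as written; you want a dilation factor smaller than $K$ so the image sits inside.

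The paper takes a different, somewhat lighter route. Instead of Slepian with variance matching, it applies Sudakov--Fernique (Lemma~\ref{lem-sudakov-fernique}), which needs only the increment inequality and delivers directly $\E\max_{V_N^e}\zeta_v\geq m_{2^{-\kappa}N}=m_N-O(1)$. The conversion to a probability statement then comes for free from the trivial bound $\max_{V_N^e}\zeta_v\leq\max_{V_N}\eta_v$ together with the exponential right tail \eqref{eq-concentration}: a variable with expectation at least $m_N-O(1)$ and integrable right tail above $m_N$ must exceed $m_N-C_\epsilon$ with probability $1-\epsilon$ by a Markov-type argument. This avoids the variance-matching entirely and hence the boundary issue above. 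Your approach, once repaired, buys a direct tail comparison rather than going through the expectation; either way works.
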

\begin{proof}
We will apply Lemma~\ref{lem-sudakov-fernique}. For $\kappa\in \mathbb{N}$ to be specified later, define $\tilde \phi_\kappa(\cdot): V_{2^{-\kappa}N} \mapsto V_N^e$ by
$$\tilde \phi_\kappa(v) = 2^\kappa v\,, \mbox{ for all } v\in V_{2^{-\kappa}N}\,.$$
Let $\{\eta^{2^{-\kappa} N}_v: v\in V_{2^{-\kappa} N}\}$ be a GFF on $V_{2^{-\kappa} N}$. We claim that for large $\kappa$ (independent of $N$)
\begin{equation}\label{eq-claim-compare}
\E (\zeta_{\tilde \phi_\kappa(u)} - \zeta_{\tilde \phi_\kappa(v)})^2 \geq \E (\eta^{2^{-\kappa} N}_u - \eta^{2^{-\kappa} N}_v)^2, \mbox{ for all } u, v \in V_{2^{-\kappa}N}\,.
\end{equation}
In order to see this, we note that by \eqref{eq:resistGFF} and
the triangle inequality
$$\var(\eta_v) = \var \eta_ u + O(1) = \cov(\eta_v, \eta_u) + O(1) \,, \mbox{ for all } u\sim v\,.$$
This then implies that
$$\E (\zeta_v - \zeta_u)^2 = \E(\eta_u - \eta_v)^2 + O(1)\,, \mbox{ for all } u, v\in V_N\,.$$
Now again using the fact that $$\E (\eta_{\tilde \phi_\kappa(u)} - \eta_{\tilde \phi_\kappa(v)})^2 - \E (\eta^{2^{-\kappa} N}_u - \eta^{2^{-\kappa} N}_v)^2$$
grows with $\kappa$, we could select $\kappa$ large (though independent of $N$) to beat the $O(1)$ term, and thus obtain \eqref{eq-claim-compare}. At this point, an application of Lemma~\ref{lem-sudakov-fernique} and \eqref{eq-bramson-zeitouni} yields that
\begin{equation}\label{eq-max-zeta-exp}
\E \max_{v \in V_N^e} \zeta_v = m_N + O(1)\,.\end{equation}
In addition, it is clear that $\max_{v\in V_N^e}\zeta_v \leq \max_{v \in V_N} \eta_v$. Therefore, \eqref{eq-concentration} implies an exponential right tail for $\max_{v\in V_N^e} \zeta_v$. Together with \eqref{eq-max-zeta-exp}, this completes the proof of the lemma.
\end{proof}
For $\epsilon>0$, let $C_\epsilon$ be defined as in the preceding lemma, and define
$$\bar\Omega_\epsilon = \{(x_v): \mbox{$\max_{v\in V_N^e}$} \tfrac{1}{4}\mbox{$\sum_{u\sim v}$}x_u \geq m_N - C_\epsilon\}\,.$$
For $(x_v) \in \bar \Omega_\epsilon$, let $v^\star \in V_N^e$ be such that
$ \tfrac{1}{4}\mbox{$\sum_{u\sim v^\star}$}x_u = \mbox{$\max_{v\in V_N^e}$} \tfrac{1}{4}\mbox{$\sum_{u\sim v}$}x_u$.
Let $\Omega^*_\epsilon = \{(x_v) \in \bar \Omega_\epsilon: x_{v^\star} - \tfrac{1}{4}\mbox{$\sum_{u\sim v^\star}$}x_u \in (-C^*_\epsilon, 0)\}$. Note that $\{\zeta_v: v\in V_N^e\}$ is measurable in the $\sigma$-field generated by $\{\eta_v: v\in V_N^o\}$. Applying Markov field property of the GFF and Lemma~\ref{lem-max-zeta},  we obtain that there exists $C^*_\epsilon$ sufficiently large (depending only on $\epsilon$) such that
$
\P((\eta_v) \in \Omega^*) \geq 1-2\epsilon$. By \eqref{eq-concentration}, we see that there exists a constant $C^\diamond_\epsilon>0$ (depending only on $\epsilon$) such that
\begin{equation}\label{eq-Omega-diamond}
\P(\{\eta_v\}\in \Omega^\diamond_\epsilon) \geq 1- 3\epsilon\,,
\end{equation} where
$\Omega^\diamond_\epsilon = \Omega^*_\epsilon \cap \{\mbox{$\max_{v\in V_N^e}$} \tfrac{1}{4}\mbox{$\sum_{u\sim v}$}x_u \geq \mbox{ $\max_{v\in V_N}$ } x_v - C^\diamond_\epsilon\}$.
Now choose $\epsilon = 1/4$. For $\lambda\geq 0$, define a map $\Psi_\lambda: \Omega_{1/4}^\diamond \mapsto \mathbb{R}^{V_N}$ by $\Psi_\lambda((x_v)) = (y_v)$ with
$$y_v = x_v \mbox{ for all } v\neq v^\star, \mbox{ and } y_{v^\star} =  2\mbox{$ \max_v$} x_v + \lambda - x_{v^\star}\,.$$
The somewhat strange definition of $y_{v^\star}$ above (as opposed to set $y_{v^\star} = \max_v x_v + \lambda$) is for the purpose of ensuring the mapping to be bijective. By definition, we have that
$$\gamma(\Psi_\lambda((x_v))) = 2\max_v x_v + \lambda - x_{v^\star} - \max_v x_v \geq  \lambda\,,$$
for all $(x_v) \in \Omega^\diamond_{1/4}$.
It is also obvious that $\Psi_\lambda$ is a bijective mapping and that the determinant of the Jacobian is 1. In addition, it is straightforward to check (by definition of $\Omega^\diamond$) for some absolute constants $c^\diamond, C^\diamond>0$
$$f(\Psi_\lambda((x_v))) \geq c^\diamond \mathrm{e}^{-C^\diamond \lambda^2} f((x_v))\,, \mbox{ for all } (x_v) \in \Omega^\diamond_{1/4}\,.$$
Integrating over $\Omega^\diamond_{1/4}$ and applying \eqref{eq-Omega-diamond}, we complete the proof for the lower bound in \eqref{eq-gap-gaussian}.

\smallskip

\noindent{\bf Lower bound on the gap.}  For any $\epsilon>0$, we let $\Omega^\diamond_\epsilon$ and $v^\star$ be defined as above such that \eqref{eq-Omega-diamond} holds.  Denote by $\tau$  the maximizer of $\max_{v\in V_N} x_v$.  By Theorem~\ref{thm-exponential-growth}, there exists $C_\epsilon^\star>0$ such that $\P(\Omega_\epsilon^\star) \geq 1-4\epsilon$, where
$$\Omega^\star_\epsilon = \Omega^\diamond_\epsilon \cap \{(x_v): |\{v: x_v \geq x_{\tau'}- C^*_\epsilon - 1\}| \geq C^\star_\epsilon\}\,.$$
Consider $0<\delta<1$, and define $\mathcal{C}_i = \{(x_v): (i-1)\delta \leq \gamma((x_v)) < i\delta\}$ for all $i\geq 1$. We then construct a mapping $\Phi_i: \Omega^\star_\epsilon \cap \mathcal{C}_1 \mapsto  \R^{V_N}$ by (say $\Phi_i$ maps $(x_v)$ to $(y_v)$) defining
$$y_v = x_v \mbox{ if } v\not\in \{v^\star, \tau\}\,, \mbox{ and } y_{v^\star} = x_{\tau} + i \delta\,,$$
and in addition $y_{\tau} = x_{v^\star}$ if $v^\star \neq \tau$. For all $(x_v)\in \Omega_\epsilon$ and $i=1, \ldots, 1/\delta$, it is clear that
$$f((x_v)) \leq C'_\epsilon f(\Phi_i((x_v)))\,,$$
where $C'_\epsilon$ is a constant that depends on $\epsilon$. In addition, for all $(x_v)\in \Omega^\star_\epsilon\cap\mathcal{C}_1$ we  see that $\Phi_i((x_v)) \in  \mathcal{C}_{i+1}$. Furthermore, every image has at most $C^\star_\epsilon + 1$ pre-images in $\Omega^\star_\epsilon \cap \mathcal{C}_1$. In order to see this, we note that there are two cases when trying to reconstruct $(x_v)$ from $(y_v)$: (1) $v^\star = \tau$, in which we obtain one valid instance of $(x_v)$; (2) $v^\star \neq \tau$, in which we obtain at most $C^\star_\epsilon$ valid instances of $(x_v)$. This is because by definition $v^\star$ is the maximizer of $\max_{v\in V_N} y_v$; and $\tau$ satisfies that $y_{\tau} = x_{v^\star} \geq x_{\tau} - C_\epsilon^*$, and there are at most $C^\star_\epsilon$ locations whose values in $y_\cdot$ is no less than $x_{\tau} - C_\epsilon^*$. Once we locate $v^\star$ and $\tau$, the sequence $(x_v)$ is uniquely determined by $(y_v)$. Altogether, we obtain
$$\P((\eta_v)\in \Omega^\star_\epsilon \cap \mathcal{C}_1) \leq C'_\epsilon (C^\star_\epsilon+1) \P((\eta_v)\in \mathcal{C}_{i+1})$$
for all $1\leq i\leq 1/\delta$. Since $\mathcal{C}_i$'s are disjoint, we obtain that
$$\P((\eta_v)\in \Omega^\star_\epsilon \cap \mathcal{C}_1) \leq 2C'_\epsilon (C^\star_\epsilon+1)\delta\,.$$
Now, sending $\delta \to 0$ and then $\epsilon\to 0$ completes the proof of \eqref{eq-lower-gap}.

\small

\end{document}